\def\paragraph{\@startsection{paragraph}{4}%
  \z@{3pt}{-\fontdimen2\font}%
  {\normalfont\bfseries}}
\newtheorem{theorem}{Theorem}[section]
\newtheorem{lemma}[theorem]{Lemma}
\newtheorem{proposition}[theorem]{Proposition}
\newtheorem{corollary}[theorem]{Corollary}
\newtheorem{Claim}[theorem]{Claim}
\newtheorem{claim}[theorem]{Claim}
\theoremstyle{definition}
\newtheorem{conjecture}[theorem]{Conjecture}
\newtheorem*{conjecture*}{Conjecture}
\newtheorem{definition}[theorem]{Definition}
\newcommand{\eps}{\varepsilon}
\newcommand{\e}{\mathrm{e}}
\newcommand{\dtv}{d_{\mathrm{TV}}}
\newcommand{\cC}{\mathcal{C}}
\newcommand{\cF}{\mathcal{F}}
\newcommand{\cG}{\mathcal{G}}
\DeclareMathOperator{\Prob}{\mathbb{P}}
\DeclarePairedDelimiterX{\abs}[1]
  {\lvert}{\rvert}{\ifblank{#1}{\,\cdot\,}{#1}}
\DeclarePairedDelimiterX{\norm}[1]
  {\lVert}{\rVert}{\ifblank{#1}{\,\cdot\,}{#1}}
\title{Monotonicity and decompositions of random regular graphs}
\author[Hollom]{Lawrence Hollom}
\address{Department of Pure Mathematics and Mathematical Statistics, University of Cambridge, Cambridge, CB3 0WA, United Kingdom}
\email{lh569@cam.ac.uk}
\author[Lichev]{Lyuben Lichev}
\address{Institute of Statistics and Mathematical Methods in Economics, TU Wien, Wiedner Hauptstra\ss e 8-10, A-1040 Vienna, Austria}
\email{lyuben.lichev@tuwien.ac.at}
\author[Mond]{Adva Mond}
\address{Department of Mathematics, King’s College London, Strand, London, WC2R 2LS, United Kingdom}
\email{adva.mond@kcl.ac.uk}
\author[Portier]{Julien Portier}
\address{Ecole Polytechnique Federale de Lausanne (EPFL), CH-1015 Lausanne,
Switzerland}
\email{julien.portier@epfl.ch}
\author[Wang]{Yiting Wang}
\address{Institute of Science and Technology Austria (ISTA), 3400 Klosterneuburg, Austria}
\email{yiting.wang@ist.ac.at}
\thanks{Hollom was supported by the Internal Graduate Studentship of Trinity College, Cambridge. Lichev was supported by the Austrian Science Fund (FWF) grant No.~10.55776/ESP624. Mond was supported by UK Research and Innovation grant MR/W007320/2.  Wang was supported by the ERC Starting Grant ``RANDSTRUCT'' No.~101076777. For open access purposes, the authors have applied a CC BY public copyright license to any author accepted manuscript version arising from this submission.}
\begin{document}

\begin{abstract}
In this work we establish several monotonicity and decomposition results in the framework of random regular graphs.
Among other results, we show that, for a wide range of parameters $d_1 \leq d_2$, there exists a coupling of $G(n,d_1)$ and $G(n,d_2)$ satisfying that $G(n,d_1) \subseteq G(n,d_2)$ with high probability, confirming a conjecture of Gao, Isaev and McKay in a new regime.
Our contributions include new tools for analysing contiguity and total variation distance between random regular graph models, a novel procedure for generating unions of random edge-disjoint perfect matchings, and refined estimates of Gao’s bounds on the number of perfect matchings in random regular graphs.
In addition, we make progress towards another conjecture of Isaev, McKay, Southwell and Zhukovskii.
\end{abstract}

\maketitle

\section{Introduction}

The theory of random graphs has seen a number of fascinating advancements over the last few decades.
Part of the effort has been focused on understanding the relations between different models or instances of the same model.
The simplest example of such a relation is illustrated by the widely used technique of \emph{sprinkling} in Erd\H{o}s-R\'enyi graphs or, more generally, in percolation on graphs and lattices.
The technique exploits the simple observation that, given a graph $G$, the $p$-percolated graph $G_p$ can be written as a union of independent random subgraphs of $G$ percolated with probabilities $p_1$ and $p_2$ for $p,p_1,p_2\in [0,1]$ satisfying $1-p=(1-p_1)(1-p_2)$.
While sprinkling has immensely advanced the field of percolation, its applicability is limited to models where edges appear independently of each other.

The efficiency of powerful percolation techniques has motivated the need for establishing connections with other random graph models where methods like sprinkling are not directly available.
A cornerstone work in this direction is due to Kim and Vu~\cite{KV04} who showed that the Erd\H{o}s-R\'enyi random graph $G\sim G(n,p)$ and the random regular graph $H\sim G(n,d)$ with $\omega(\log n) = np\approx d\le n^{1/3-o(1)}$ can be coupled so that, roughly~speaking,
\begin{itemize}
    \item if $np$ is slightly smaller than $d$, then $G\subseteq H$, and
    \item if $np$ is slightly larger than $d$, then $H\setminus G$ contains very few edges.
\end{itemize}
Moreover, Kim and Vu famously conjectured that the range of values where the above relations hold can be extended to $np\approx d \le (1-\eps)n$,
and the second statement can be improved to an exact inclusion of $H$ in $G$.
The significance of this conjecture in the framework of random regular graphs is indisputable: indeed, while switching techniques for computing probabilities in dense random regular graphs and, more generally, random graphs with fixed degrees, exist~\cite{GO23} (see also~\cite{FJP22,JP18,JPRR18,KSVW01,LMP22,McK81,MW91,Wor99} for related applications of switchings), the technical proofs of many related results often lack the elegance of their Erd\H{o}s-R\'enyi counterparts.
The conjecture has seen significant developments in the last twenty years~\cite{DFRS17,GIM22,KRRS23} and is currently confirmed for the range $np\approx d\ge (\log n)^4$ due to Gao, Isaev, and McKay~\cite{GIM20+}. 

While the Kim--Vu conjecture serves as a bridge between Erd\H{o}s-R\'enyi graphs and random regular graphs, comparing random regular graphs of different degree has also attracted attention~\cite{IMcKSZ23,Jan95,MRRW97,Wor99}.
Note that monotonicity for random regular graphs with different degrees (with respect to inclusion) is related to the Kim--Vu conjecture, yet it does not follow from it in any straightforward way.
At the same time, establishing monotone couplings remains highly desirable. 
Such monotonicity for random regular graphs was conjectured by Gao, Isaev and McKay~\cite{GIM22}.

\begin{conjecture}[Conjecture~1.2 in~\cite{GIM22}]
\label{conj:GIM-Inclusion}
Consider integers $d_1\le d_2$ with $d_1,d_2\in [1,n-1]$, such that $(d_1,d_2)\neq (1,2)$, $(d_1,d_2)\neq (n-3,n-2)$ and each of $d_1n$ and $d_2n$ is even. Then, there exists a coupling of $G_1 \sim G(n,d_1)$ and $G_2 \sim G(n,d_2)$ such that $\mathbb{P}(G_1 \subseteq G_2)=1-o(1)$.
\end{conjecture}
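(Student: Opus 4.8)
The plan is to establish the conjecture for a broad range of pairs $(d_1,d_2)$ by treating separately the cases where the gap $d_2-d_1$ is large and where it is small, and throughout to reduce via complementation: replacing $(d_1,d_2)$ by $(n-1-d_2,\,n-1-d_1)$ replaces each $G_i$ by its complement $\overline{G_i}$ and reverses inclusion, so one may assume $d_2\le (n-1)/2$. This reduction also isolates the two excluded pairs: both $(1,2)$ and $(n-3,n-2)$ amount to demanding a coupling with $G(n,1)\subseteq G(n,2)$, which is impossible because a random $2$-regular graph contains an odd cycle with probability bounded away from $0$, so it admits no spanning $1$-regular subgraph.

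For the large-gap case I would use sandwiching. By the theorems of Kim--Vu and of Gao--Isaev--McKay there are densities $p_1^{+}\ge d_1/n$ and $p_2^{-}\le d_2/n$, each within a slack $s_n$ of the corresponding density (with $s_n$ of order roughly $\sqrt{d_2\log n}/n$ in the known results), together with couplings realising $G(n,d_1)\subseteq G(n,p_1^{+})$ and $G(n,p_2^{-})\subseteq G(n,d_2)$. Once $d_2-d_1$ exceeds a constant multiple of $n s_n$ one has $p_1^{+}\le p_2^{-}$, and monotonicity of the binomial random graph gives $G(n,d_1)\subseteq G(n,p_1^{+})\subseteq G(n,p_2^{-})\subseteq G(n,d_2)$. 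This yields the coupling whenever both degrees lie in the regime where the sandwiching theorems apply and are not too close to each other.

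The heart of the matter is the small-gap case, and here I would build the coupling by \emph{peeling}. Sample $G_2\sim G(n,d_2)$; when $n$ is even, repeatedly delete a uniformly random perfect matching of the current graph, a total of $d_2-d_1$ times (when $n$ is odd, so $d_1,d_2$ are even, delete a uniformly random $2$-factor, $(d_2-d_1)/2$ times instead). The resulting graph $G_1^{*}$ is $d_1$-regular and satisfies $G_1^{*}\subseteq G_2$ by construction, so it suffices to show that the law of $G_1^{*}$ is within $o(1)$ of $G(n,d_1)$ in total variation; one then recouples $G_1^{*}$ with a genuine $G(n,d_1)$, agreeing with probability $1-o(1)$, and composes the couplings. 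For a single step, writing $H\cup F'$ for the $d$-regular graph obtained by adding a perfect matching $F'$ of $\overline{H}$ to a $(d-1)$-regular graph $H$, one has the identity
\[
\Prob\bigl(G\setminus F=H\bigr)=\frac{1}{\lvert\Omega_d\rvert}\sum_{F'\text{ a perfect matching of }\overline{H}}\frac{1}{\mathrm{pm}(H\cup F')},
\]
where $G\sim G(n,d)$, $F$ is a uniformly random perfect matching of $G$, $\Omega_d$ denotes the set of $d$-regular graphs on $[n]$, and $\mathrm{pm}(\cdot)$ counts perfect matchings. If $\mathrm{pm}$ is concentrated about its mean on the relevant regular graph models up to a multiplicative factor $1+o(1)$, the right-hand side equals $(1+o(1))/\lvert\Omega_{d-1}\rvert$ for all but a negligible fraction of $H$, so one step perturbs the law by $o(1)$ in total variation; since applying a fixed randomised operation to two laws cannot increase their distance, the errors over the $d_2-d_1$ steps simply add. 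The ``novel procedure for generating unions of edge-disjoint perfect matchings'' is essentially the reverse of this process, building $G(n,d)$ one matching at a time, and it is what organises the peeling and the parity bookkeeping cleanly.

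The main obstacle is exactly the quantitative concentration of $\mathrm{pm}(G)$ for $G\sim G(n,d)$: it must be sharp, and uniform over the range of $d$ encountered, with a per-step error $\eps_n$ small enough that $(d_2-d_1)\,\eps_n=o(1)$. Gao's bounds supply the leading asymptotics of $\mathrm{pm}$; the refinement required is to control its multiplicative fluctuations, which are driven by the short-cycle statistics of $G$ and are of the needed size $1+o(1)$ only once $d\to\infty$ (for $d=O(1)$ these fluctuations are genuinely $\Theta(1)$-random, and one would instead have to route the argument through small-subgraph conditioning, which is why the clean peeling argument is restricted). This is also why the method produces a wide but incomplete range: $d_1$ must be pushed away from the smallest constants, $d_2$ away from the symmetric boundary near $n-1$, and one must verify that the small-gap regime and the sandwiching regime overlap, so that every admissible pair is covered by at least one of the two.
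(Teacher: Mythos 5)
The statement you are proving is a \emph{conjecture}, not a theorem of this paper: Conjecture~\ref{conj:GIM-Inclusion} remains open in full, and what the paper establishes is the partial case $n$ even, $d_1\in[3,n^{1/7}/\log n]$, $d_2\in[d_1,n-1]$ with $d_2=\omega(1)$ (Theorem~\ref{thm:Coupling-Inclusion}). Your proposal, which is itself a sketch of a partial result, is worth comparing against the paper's route.

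Your high-level plan (complementation to normalise, sandwiching for large gaps, a one-matching-at-a-time argument for small gaps) matches the paper's strategy quite closely. The complementation remark and the analysis of the excluded pairs $(1,2)$ and $(n-3,n-2)$ are correct. Your peeling identity
\[
\Prob\bigl(G\setminus F=H\bigr)=\frac{1}{\lvert\Omega_d\rvert}\sum_{F'}\frac{1}{\mathrm{pm}(H\cup F')}
\]
is right, and it is precisely the time-reversal of what the paper does in Corollary~\ref{cor:dTV}, which adds a uniform perfect matching and then bounds $\dtv(\mu_d\oplus\mu_1,\mu_{d+1})$ via Strassen's theorem (Theorem~\ref{thm:strassen}) and the concentration of the perfect-matching count (Proposition~\ref{prop:PM-counting}). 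However, you state the required accumulation as $(d_2-d_1)\,\eps_n=o(1)$ with a \emph{uniform} per-step error $\eps_n$; in fact the per-step error must be allowed to depend on the current degree $d$, and the paper needs the sharper rate $O(d^{-1.1})$ so that $\sum_{d\ge d_1}d^{-1.1}=O(d_1^{-0.1})=o(1)$ for $d_1\to\infty$. A naive second-moment bound only gives $O(d^{-1})$, which has a divergent sum --- exactly the obstruction that forces the paper to compute third and fourth moments of the triangle count (Lemma~\ref{lem:estimate-2}) and refine Gao's variance estimate (Proposition~\ref{prop:Estimate-E[(Y-Y*)^2]}). You also leave implicit an additional normalisation ingredient: you need the number of perfect matchings of $\overline H$ (equivalently, the number of $d$-regular supergraphs of $H$) to be asymptotically constant over $(d-1)$-regular $H$. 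This requires an application of McKay's enumeration (Theorem~\ref{thm:McKay}) and is not a consequence of the concentration of $\mathrm{pm}$ alone.

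The genuine gap in your proposal --- which you correctly flag but leave unresolved by gesturing at ``small-subgraph conditioning'' --- is the bounded-degree case $d_1=O(1)$. Here the per-step TV error is a positive constant, so no additive accumulation can give $o(1)$. The paper's resolution of this is a new device, not a refinement of the peeling argument: it builds the \emph{Alternative Sampling Procedure} (Definition~\ref{def:procedure-sample}) for generating $G(n,1)\oplus\dotsb\oplus G(n,1)$ in blocks of $d_1$ matchings, shows via the random-overlay computation (Lemma~\ref{lem:disjoint}) that this reproduces $\nu_{kd_1}$ up to $1+o(1)$ (Lemma~\ref{lem:coincide}), and then constructs an iterated max-coupling (Lemma~\ref{lem:coupleXY}) together with a contiguity argument ($\mu_{d_1}$ vs.~$\nu_{d_1}$, Theorem~\ref{thm:const} and Proposition~\ref{prop:cont-dtv}) to show that the uniform $d_1$-regular graph lands inside one of $k$ blocks with probability $\to 1$ as $k\to\infty$. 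Small-subgraph conditioning gives contiguity, not a monotone coupling, so it does not by itself replace this machinery. Finally, both your plan and the paper's proof ultimately fall short of the full conjecture --- the paper's result is restricted to $n$ even and $d_1\le n^{1/7}/\log n$ (beyond which the variance estimates for $\mathrm{pm}$ are unavailable), and extending to odd $n$ would require analogous concentration for the number of $2$-factors, which, as the conclusion notes, is not currently in the literature for growing degree.
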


\Cref{conj:GIM-Inclusion} is known to hold in a few cases~\cite{Gao23,GIM20+,IMcKSZ23}:
\begin{itemize}
    \setlength\itemsep{0.3em}
    \item $d_1 = 1$ and $d_2\in [3,n-1]$,
    \item $\omega((\log n)^7) = d_1\le n^{1/7}/\log n$ and $d_2\in [d_1,n-1]$,
    \item $\omega(1) = d_1\le n^{1/3}$ and $d_2 = d_1+1$,
    \item $(\log n)^4\le c d_1\le d_2\le n-(\log n)^4$ for some constant $c > 1$,
    \item $\min\{d_1,d_2-d_1\} = \omega(n/\log n)$.
\end{itemize}
A closer look at the listed cases reveals that, while the monotonicity properties of random regular graphs of large polylogarithmic degrees are relatively well understood, graphs with slowly growing degrees remain elusive.
Our first result bridges this gap and confirms \Cref{conj:GIM-Inclusion} in the case when $n$ is even and $d_1$ is not too large.

\begin{theorem}
\label{thm:Coupling-Inclusion}
Consider $n$ even, $d_1\in [3,n^{1/7}/\log n]$ and $d_2\in [d_1,n-1]$ with $d_2 = \omega(1)$.
Then there exists a coupling of $G_1 \sim G(n,d_1)$ and $G_{2} \sim G(n,d_2)$ such that $\mathbb{P}(G_1 \subseteq G_2)=1-o(1)$.
\end{theorem}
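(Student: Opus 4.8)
The plan is to first reduce, using the known cases of \Cref{conj:GIM-Inclusion}, to the regime where both degrees are polylogarithmic in $n$, and there to construct the coupling through a common ``skeleton'' built from a nested family of edge-disjoint perfect matchings. For the reduction, note first that since $n$ is even, $d_1 n$ and $d_2 n$ are automatically even and the excluded pairs $(1,2)$ and $(n-3,n-2)$ are irrelevant in the range $3 \le d_1 \le n^{1/7}/\log n$. As the case $\omega((\log n)^7) = d_1 \le n^{1/7}/\log n$, $d_2 \in [d_1,n-1]$ is already known, we may assume $d_1 \le (\log n)^8$, say. If also $d_2 \le (\log n)^8$ we are left with $3 \le d_1 \le d_2 \le (\log n)^8$; if instead $d_2 > (\log n)^8$, set $m := (\log n)^8$, apply the same known case with $d_1$ replaced by $m$ (legitimate since $\omega((\log n)^7) = m \le n^{1/7}/\log n$ and $d_2 \in [m,n-1]$) to get a coupling with $G(n,m) \subseteq G(n,d_2)$ with high probability, and glue it along $G(n,m)$ to a coupling of $G(n,d_1)$ and $G(n,m)$. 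In either case it suffices to produce, for all integers $3 \le a \le b \le (\log n)^8$, a coupling of $G(n,a)$ and $G(n,b)$ with $G(n,a) \subseteq G(n,b)$ with high probability.

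For this, I would use the procedure for generating unions of edge-disjoint perfect matchings developed in this paper. For each $d \le (\log n)^8$ it outputs a random graph $\cN_d$, presented as $N_1 \cup \cdots \cup N_d$ with $N_1, \dots, N_d$ pairwise edge-disjoint perfect matchings chosen one after another, having the two properties that (i) $\dtv(\cN_d, G(n,d)) = o(1)$ uniformly in $d \le (\log n)^8$, and (ii) the procedure is \emph{nested}: after generating $N_1,\dots,N_b$, the partial union $N_1 \cup \cdots \cup N_a$ has law within $o(1)$ of $\cN_a$ in total variation, for every $a \le b$. Property~(i) is where the refined estimates of Gao's bounds on the number of perfect matchings are used: the bias of the procedure relative to the uniform measure on $d$-regular graphs, together with the correction one applies to it, is governed by ratios of perfect-matching counts of the graphs occurring along the way, and one must show both that these are $1+o(1)$ and that the per-step errors do not accumulate over the $d$ rounds. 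Granting (i) and (ii), run the procedure once to obtain $N_1,\dots,N_b$, put $\widetilde G_a := N_1 \cup \cdots \cup N_a$ and $\widetilde G_b := N_1 \cup \cdots \cup N_b$, so that $\widetilde G_a \subseteq \widetilde G_b$ surely, while $\dtv(\widetilde G_a, G(n,a)) = o(1)$ (by (ii) and (i)) and $\dtv(\widetilde G_b, G(n,b)) = o(1)$ (by (i)). Gluing $(\widetilde G_a, \widetilde G_b)$ to optimal couplings of $\widetilde G_a$ with $G(n,a)$ and of $\widetilde G_b$ with $G(n,b)$ yields a coupling with $\Prob(G(n,a) \subseteq G(n,b)) \ge 1 - \dtv(\widetilde G_a, G(n,a)) - \dtv(\widetilde G_b, G(n,b)) = 1-o(1)$, and composing with the known coupling in the case $d_2 > (\log n)^8$ finishes the proof.

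The main obstacle is property~(i): that peeling off (suitably corrected) uniformly random perfect matchings reproduces $G(n,d)$ up to $o(1)$ total variation for \emph{every} $d$ down to $d=3$. The correction is unavoidable, not cosmetic: for the naive sequential procedure one computes that the number of triangles converges to a Poisson variable with mean $\tfrac16 d(d-1)(d-2)$, whereas in $G(n,d)$ it is Poisson with mean $\tfrac16 (d-1)^3$, so the two differ in total variation by a positive constant as soon as $d$ is bounded. Designing a procedure whose residual bias is $o(1)$ in total variation simultaneously for all $d \le (\log n)^8$, and doing so compatibly with the nesting in~(ii) — since the graph obtained after $a$ rounds of the length-$b$ run must still look like $\cN_a$ — is exactly where the new total-variation and contiguity tools, together with the refined perfect-matching enumeration, are needed. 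The remaining ingredients (the reductions, the appeals to known cases, and the gluing of couplings) are routine.
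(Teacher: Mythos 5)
Your reduction to the regime $3 \le d_1 \le d_2 \le (\log n)^8$ via the known case $\omega((\log n)^7) = d_1 \le n^{1/7}/\log n$, $d_2 \in [d_1,n-1]$, and the gluing along $G(n,(\log n)^8)$ when $d_2$ is large, is correct and matches the paper. The gap lies in your Property (i): you correctly identify that the naive union $\nu_d = \mu_1 \oplus \dotsb \oplus \mu_1$ of $d$ perfect matchings is at \emph{constant} total variation distance from $\mu_d$ when $d = O(1)$ (your triangle-count computation, $\tfrac16 d(d-1)(d-2)$ versus $\tfrac16 (d-1)^3$, is accurate), but you then assert without construction that a ``suitably corrected'' nested procedure can drive this distance down to $o(1)$ simultaneously for all $d \ge 3$ and compatibly with the nesting property. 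That assertion is precisely the content of the theorem in the bounded-$d_1$ case, and you do not supply the correction. The telescoping bound $\dtv(\mu_d \oplus \mu_1, \mu_{d+1}) = O(d^{-1.1})$ established in the paper, summed down to $d=3$, only yields a constant --- which is exactly why the paper abandons a total-variation argument for bounded $d_1$.

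The paper's actual handling of $d_1 = O(1)$ does not try to make a union of $d_1$ matchings TV-close to $\mu_{d_1}$ at all. Instead it works with contiguity: it shows that for every $\eps > 0$ there is a constant $k = k(\eps, d_1)$ and a coupling under which $G(n,d_1)$ is a subgraph of a $kd_1$-regular graph distributed as $\nu_{kd_1}$ with probability $\ge 1-\eps$ (\Cref{lem:complete_couple}). This rests on an iterative coupling over \emph{isomorphism classes}: sampling i.i.d.\ classes $Y_1,\ldots,Y_k$ from $\bar\nu_{d_1}$ via the alternative sampling procedure (\Cref{def:procedure-sample}), coupling them against a single $X \sim \bar\mu_{d_1}$ so that $X \in \{Y_1,\ldots,Y_k\}$ with probability $\ge 1-\eps$ (\Cref{lem:coupleXY}), and using the random-overlay estimate (\Cref{lem:disjoint}) to realize the classes as edge-disjoint labelled graphs. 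The argument crucially uses the \emph{contiguity} of $\mu_{d_1}$ and $\nu_{d_1}$, quantified via \Cref{prop:cont-dtv}, rather than closeness in total variation --- which is what makes it possible to beat the constant TV gap you identified. None of this machinery appears in your proposal, and it is not a routine ``remaining ingredient'': it is the central new construction. Until you either supply a concrete corrected procedure satisfying your (i) and (ii) for bounded $d$, or adopt a contiguity-based coupling of the above kind, the proof is incomplete exactly where it is hardest.
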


\Cref{thm:Coupling-Inclusion} will be derived by combining the second point in the above list with several technical results guaranteeing that, in a certain sense, $G(n,d_2)$ can be decomposed into edge-disjoint $G(n,d_1)$ and $d_2-d_1$ perfect matchings for $3\le d_1\le d_2 \le n^{1/10}$.

While \Cref{conj:GIM-Inclusion} requires that the graphs $G_1$ and $G_2$ are chosen uniformly at random, no restriction is imposed on the graph $G_2\setminus E(G_1)$.
With an eye towards recovering the properties of the sprinkling technique in the framework of random regular graphs, requiring that the complement of $G_1$ behaves as a random $(d_2-d_1)$-regular graph is a natural additional restriction.
More precisely, for $d_1,d_2\in [1,n-1]$ with $d_1\le d_2$, we denote by $G(n,d_1)\oplus G(n,d_2-d_1)$ the random $d_2$-regular graph obtained by sampling independently $G_1\sim G(n,d_1)$ and $G_2'\sim G(n,d_2-d_1)$ conditionally on $G_1,G_2'$ being edge-disjoint, and taking their union.
Isaev, McKay, Southwell and Zhukovskii~\cite{IMcKSZ23} conjectured that the distribution of $G(n,d_1)\oplus G(n,d_2-d_1)$ is essentially the one of $G(n,d_2)$ as long as $d_2$ tends to infinity.

\begin{conjecture}[Conjecture 1.2 in \cite{IMcKSZ23}]\label{conj:IMSZ-coupling-sum}
For $d_1,d_2\in [1,n-1]$ with $d_1\le d_2 =\omega(1)$, there exists a coupling of $G_{\oplus} \sim G(n,d_1) \oplus G(n,d_2-d_1)$ and $G\sim G(n,d_2)$ such that $\mathbb{P}(G_{\oplus} = G)=1-o(1)$.
\end{conjecture}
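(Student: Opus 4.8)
The plan is to reduce the conjecture to a single sharp concentration estimate. Write $M(d)$ for the number of labelled $d$-regular graphs on $[n]$, and, for a $d_2$-regular graph $G$, let $N(G)$ be the number of spanning $d_1$-regular subgraphs of $G$; note that if $H\subseteq G$ is spanning and $d_1$-regular then $G\setminus H$ is automatically $(d_2-d_1)$-regular and edge-disjoint from $H$. Since $G_\oplus$ is obtained by conditioning a product of two uniform models on edge-disjointness, unwinding the definitions gives $\mathbb{P}(G_\oplus=G)=N(G)\big/\sum_{G'}N(G')=N(G)\big/\bigl(M(d_2)\,\mathbb{E}[N]\bigr)$, where $G'$ ranges over $d_2$-regular graphs and the expectation is over $G\sim G(n,d_2)$. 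Consequently
\[
  \dtv\bigl(G_\oplus,\,G(n,d_2)\bigr)
  =\tfrac12\,\mathbb{E}_{G\sim G(n,d_2)}\left|\frac{N(G)}{\mathbb{E}[N]}-1\right|
  \le\tfrac12\left(\frac{\mathbb{E}[N^2]}{\mathbb{E}[N]^2}-1\right)^{1/2},
\]
the second step by Cauchy--Schwarz, so it suffices to prove $\mathbb{E}[N^2]=(1+o(1))\,\mathbb{E}[N]^2$, i.e.\ that $N$ concentrates around its mean in $L^2$. (Implicitly $d_1n$ and $d_2n$ are even for the models to be defined; if $n$ is odd this forces $d_1,d_2$ even, to be handled in parallel, and the degenerate case $d_1=d_2$ is trivial.)

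To control the moments I would expand $\mathbb{E}[N]=M(d_1)\,\mathbb{E}_{H\sim G(n,d_1)}\mathbb{P}\bigl(H\subseteq G(n,d_2)\bigr)$ and $\mathbb{E}[N^2]=\sum_{H_1,H_2}\mathbb{P}\bigl(H_1\cup H_2\subseteq G(n,d_2)\bigr)$ (the sum over ordered pairs of spanning $d_1$-regular graphs), estimating each containment probability as $M_F/M(d_2)$ with $M_F$ the number of $d_2$-regular graphs containing the fixed sparse graph $F$. In the degree range we aim for, these extension counts can be pinned down to a $1+o(1)$ factor by switchings — precisely the flavour of the refined enumeration this paper develops for matching counts — with leading term depending on $F$ only through $|E(F)|$, and with the (genuinely present) $F$-dependent corrections, governed by the short-cycle profile of $F$, washing out once summed over $H$, respectively over $(H_1,H_2)$, because short cycles in $G(n,d_1)$ and in unions of two such graphs obey the expected Poisson statistics. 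The second moment then reduces to organising the double sum by the overlap $j=|E(H_1)\cap E(H_2)|$ and showing that the pairs with the typical overlap — where $H_1$ and $H_2$ sit inside $G(n,d_2)$ essentially as two independent random $d_1$-regular subgraphs would — reproduce $\mathbb{E}[N]^2$ up to $1+o(1)$, while atypical overlaps contribute $o(\mathbb{E}[N]^2)$ (the near-diagonal terms being negligible since $\mathbb{E}[N]\to\infty$). This last estimate is the crux, and the main obstacle: it needs enumeration of $d_2$-regular graphs containing a prescribed sparse subgraph that is simultaneously accurate to within $1+o(1)$ and \emph{uniform} over the whole family of unions $H_1\cup H_2$, and it is this uniform accuracy that will cap the admissible $d_1,d_2$ — so I expect the argument to confirm \Cref{conj:IMSZ-coupling-sum} only in a restricted degree range, of the same flavour as the regime $3\le d_1\le d_2\le n^{1/10}$ in which this paper's decomposition results operate, rather than in full; an alternative, equivalent route in that range is to feed the decomposition of $G(n,d_2)$ into an edge-disjoint $G(n,d_1)$ and $d_2-d_1$ perfect matchings into the union-of-matchings machinery and argue that, conditionally on the $G(n,d_1)$-part, the union of the matchings is $\dtv$-close to a random $(d_2-d_1)$-regular graph edge-disjoint from it.

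Two remarks on feasibility. First, when $d_1=1$ the quantity $N(G)$ is simply the number of perfect matchings of $G$, so the required concentration is exactly (a refinement of) Gao's bounds on matchings in $G(n,d_2)$, which this paper already sharpens; this already settles the case $d_1=1$, and also illustrates why the hypothesis $d_2=\omega(1)$ cannot be dropped — for $(d_1,d_2)=(1,2)$ one has $\dtv\to1$, since a union of two edge-disjoint perfect matchings is always bipartite whereas $G(n,2)$ is non-bipartite with high probability. Second, a more structural route to the concentration of $N$ for general $d_1$ is the small subgraph conditioning method: one computes the joint factorial moments of $N$ with the short-cycle counts $(X_k)_{k\ge3}$ of $G(n,d_2)$, checks that they match the independent-Poisson-with-tilt pattern with tilt parameters $\delta_k=\delta_k(n,d_1,d_2)$, and verifies $\sum_k\lambda_k\delta_k^2\to0$ as $d_2\to\infty$; the vanishing of this sum is both what yields $\mathbb{E}[N^2]/\mathbb{E}[N]^2\to1$ and the conceptual reason the conjecture requires $d_2$ to grow.
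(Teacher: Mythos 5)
You should first note that this statement is presented in the paper as an \emph{open conjecture} (Conjecture~1.2 of Isaev, McKay, Southwell and Zhukovskii), and the paper does not prove it, so there is no in-paper proof to compare against. The paper's contribution towards it, \Cref{thm:Coupling-Sprinkling-2-Random}, is strictly weaker: it couples $G(n,d_1)\oplus G(n,d_2-d_1)$ with $G(n,d_1')\oplus G(n,d_2-d_1')$, never with $G(n,d_2)$ itself, and only when $d_2=O(n^{1/10})$ and $\min\{d_1,d_1',d_2-d_1,d_2-d_1'\}=\omega(1)$.

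Your setup is correct as far as it goes: the identity $\mathbb{P}(G_\oplus=G)\propto N(G)$ where $N(G)$ counts spanning $d_1$-regular subgraphs of $G$, the Cauchy--Schwarz reduction of the total variation distance to showing $\mathbb{E}[N^2]=(1+o(1))\,\mathbb{E}[N]^2$ under $G\sim G(n,d_2)$, the identification of the $d_1=1$ case with perfect-matching concentration, and the bipartiteness obstruction for $(d_1,d_2)=(1,2)$ are all right. But the crux you flag --- an $L^2$ concentration of $N$ uniform in $d_1$ --- is a genuine open problem once $d_1\geq 2$, and the paper does not resolve it either: it establishes (a sharpened form of) the estimate only for $d_1=1$ (\Cref{prop:PM-counting}, giving \Cref{cor:dTV} at rate $O(d^{-1.1})$) and then iterates one matching at a time. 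The resulting telescoping sum $\sum_j j^{-1.1}$ is small only when started from a large index, which is exactly why \Cref{thm:Coupling-Sprinkling-2-Random} moves between two nontrivial splits of $d_2$ but never reaches $\mu_{d_2}$. To close the gap this way one would additionally need $\dtv(\nu_{d_2},\mu_{d_2})=o(1)$, the unproved converse of \Cref{thm:Contiguity-Disjoint-Matchings}; and your ``alternative, equivalent route'' in the last sentence --- comparing the union of matchings in a decomposition of $G(n,d_2)$ to a uniform $(d_2-d_1)$-regular graph conditioned to avoid the $G(n,d_1)$-part --- needs exactly this missing two-sided contiguity. So the plan is internally consistent, correctly locates the bottleneck, and is candid that it will only cover a restricted range, but the central estimate it requires is unavailable both in the literature and in the paper, and the proposal therefore does not constitute a proof of the conjecture.
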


The authors of~\cite{IMcKSZ23} verified \Cref{conj:IMSZ-coupling-sum} in several cases, namely:
\begin{itemize}
\setlength\itemsep{0.3em}
    \item $d_1 = 1$, $d_2=\omega(1)$ and $d_2 = o(n^{1/3})$,
    \item $\min\{d_1,d_2-d_1,n-d_2\} = \omega(n/\log n)$,
    \item $d_1^2 (n/\min\{d_2-d_1,n-d_2+d_1\})^{2d_1}\le (\log n)/3$.
\end{itemize}
They also observed that the assumption $d_2=\omega(1)$ is actually necessary. We contribute to the understanding of \Cref{conj:IMSZ-coupling-sum} by verifying that, for large  $d_2 \le n^{1/10}$, the distributions of $G(n,d_1)\oplus G(n,d_2-d_1)$, with $d_1$ far from $1$ and $d_2$, are asymptotically the same.

\begin{theorem}
\label{thm:Coupling-Sprinkling-2-Random}
Consider $n$ even, $d_2=\omega(1)$ and $d_2 = O(n^{1/10})$. 
Fix $d_1,d_1'$ with
\[\min\{d_1,d_1',d_2-d_1,d_2-d_1'\}=\omega(1).\]
Then, there exists a coupling of $G_{\oplus} \sim G(n,d_1) \oplus G(n,d_2-d_1)$ and $G_{\oplus}' \sim G(n,d'_1) \oplus G(n,d_2-d_1')$ such that $\mathbb{P}(G_{\oplus} = G_{\oplus}')=1-o(1)$.
\end{theorem}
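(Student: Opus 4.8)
The plan is to reduce both sides to a single reference distribution and then invoke transitivity of couplings. Concretely, we will show that for every admissible split $(d_1,d_2-d_1)$ one has $\dtv\bigl(G(n,d_1)\oplus G(n,d_2-d_1),\,G(n,c)\oplus G(n,d_2-c)\bigr)=o(1)$, where $c:=\lfloor d_2/2\rfloor$ is fixed once and for all; the theorem then follows by composing the coupling obtained for $(d_1,d_2-d_1)$ with the one obtained for $(d_1',d_2-d_1')$, both being couplings to the common reference $G(n,c)\oplus G(n,d_2-c)$. Since $G(n,a)\oplus G(n,b)$ and $G(n,b)\oplus G(n,a)$ have the same law, we may assume $d_1\le d_2-d_1$ and $d_1'\le d_2-d_1'$, so that after relabelling $\omega(1)\le d_1\le d_1'\le d_2/2$; in particular $d_1\le c$, and all of $d_1,d_1',d_2-d_1,d_2-d_1'$ lie in the window $[\omega(1),O(n^{1/10})]$ in which our decomposition machinery operates.

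Fix an admissible $d_1$ and put $s:=c-d_1\ge 0$ (if $s=0$ there is nothing to do). We move $s$ units of degree from the larger part to the smaller part in two steps, each an instance of the ``regular graph $=$ smaller regular graph $+$ edge-disjoint perfect matchings'' decomposition developed in this paper, but applied \emph{relative to} an edge-disjoint regular bystander (equivalently, inside $K_n$ with a sparse edge-disjoint regular graph deleted), which is the form in which the analysis of $\oplus$ naturally provides it. First, keeping $G(n,d_1)$ fixed, we rewrite the larger part: since $d_2-d_1$ lies in the admissible range and the residual degree $d_2-c=\lceil d_2/2\rceil$ tends to infinity, the decomposition gives a coupling under which, with probability $1-o(1)$,
\[
G(n,d_1)\oplus G(n,d_2-d_1)\;=\;G(n,d_1)\oplus\bigl(G(n,d_2-c)\cup\cM_s\bigr),
\]
where $\cM_s$ is a union of $s$ edge-disjoint perfect matchings output by our matching-generation procedure, edge-disjoint from the other two graphs. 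Because a graph built by sampling independent pieces and conditioning on pairwise edge-disjointness has a law that is insensitive to the bracketing of the union, the right-hand side equals $G(n,d_1)\oplus\cM_s\oplus G(n,d_2-c)$. Second, keeping the new larger part $G(n,d_2-c)$ fixed, we absorb $\cM_s$ into the smaller part: since $d_1=\omega(1)$ and $d_1+s=c$ lies in the admissible range, the same decomposition result read in reverse (and again relative to a bystander) couples $G(n,d_1)\cup\cM_s$ with $G(n,c)$, so that $G(n,d_1)\oplus\cM_s\oplus G(n,d_2-c)=G(n,c)\oplus G(n,d_2-c)$ with probability $1-o(1)$. Composing the two couplings yields the claimed estimate, and applying it with $d_1$ and with $d_1'$ completes the proof.

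The main obstacle — where essentially all of the work sits — is making sense of, and controlling, the relative form of the decomposition used in both steps. The operation $\oplus$ conditions two random regular graphs of degrees of order $d_2$ to be edge-disjoint, an event of probability $\e^{-\Theta(d_1(d_2-d_1))}$ and hence far from a harmless conditioning, so one cannot simply quote the decomposition for a single random regular graph and hope that total variation distance is preserved. One must instead control the conditional law of $G(n,m)$ given edge-disjointness from a fixed regular graph $H$ — equivalently, the uniform $m$-regular graph in the host $K_n\setminus H$ — and analyse how peeling off a batch of up to $\Theta(d_2)$ perfect matchings at once interacts with this conditioning while keeping the accumulated total variation error $o(1)$. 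This is exactly where the novel matching-generation procedure, and the refined estimates for the number of perfect matchings in $G(n,d)$ and in $K_n$ minus a sparse regular graph, enter; it is also where the hypothesis that all four of $d_1,d_1',d_2-d_1,d_2-d_1'$ tend to infinity — ensuring that neither the regular core nor the matching layer is ever too thin — is genuinely used.
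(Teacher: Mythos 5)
Your proof is correct and takes essentially the paper's approach: reduce both $\oplus$-decompositions to a common reference distribution, telescope one perfect matching at a time, and bound each step by combining the stability of $\dtv$ under $\oplus$ with an edge-disjoint bystander (\Cref{cor:cont_Gnd}, obtained from McKay's \Cref{thm:McKay}) with the single-matching estimate $\dtv(\mu_d\oplus\mu_1,\mu_{d+1})=O(d^{-1.1})$ (\Cref{cor:dTV}); the only cosmetic difference is your choice of the balanced split $\mu_c\oplus\mu_{d_2-c}$ as the reference rather than the paper's intermediate $\eta_{d_1,d_1'}=\mu_{d_1}\oplus\mu_{d_1'}\oplus\mu_1\oplus\dotsb\oplus\mu_1$. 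One small correction to your closing commentary: the ``novel matching-generation procedure'' (the ASP of \Cref{def:procedure-sample}) is not what makes this theorem work and is reserved for the $d_1=O(1)$ case of \Cref{thm:Coupling-Inclusion}; here what matters is precisely that all four of $d_1,d_1',d_2-d_1,d_2-d_1'$ are $\omega(1)$, so every telescoping sum $\sum_j j^{-1.1}$ starts from a diverging index and is therefore $o(1)$ -- which you do use, just under the wrong attribution.
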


Simply put, while \Cref{thm:Coupling-Sprinkling-2-Random} does not claim that $G(n,d_2)$ can be decomposed as a disjoint union of $G(n,d_1)$ and $G(n,d_2-d_1)$, it confirms that this disjoint union has asymptotically the same distribution as long as each of $d_1$ and $d_2-d_1$ is large.

A classic result due to Molloy, Robalewska, Robinson and Wormald~\cite{MRRW97} ensures that, when $d = O(1)$ and $d_1+\dotsb+d_k = d$ with $(d,d_1,d_2)\neq (2,1,1)$, the properties holding with high probability in the model $G(n,d_1)\oplus \dotsb \oplus G(n,d_k)$ also hold with high probability in the model $G(n,d)$ and vice versa. 
Using this result to decompose a random $d$-regular graph on an even number of vertices into a disjoint union of perfect matchings and Hamilton cycles has sometimes been instrumental for the analysis of random regular graphs, see~\cite{BWZ02,KW01,Wor99}.
In a more structural setting, Ferber and Jain~\cite{FB18} showed that pseudorandom $d$-regular graphs on an even number of vertices can be decomposed into $d$ edge-disjoint perfect matchings, that is, such graphs have a 1-factorisation.
While the latter result holds in vast generality and, in particular, implies that $G(n,d)$ has a 1-factorisation with high probability, the probabilistic properties of the exhibited 1-factorisation remain unclear.
In this work, at the price of restricting the range of values for $d$, we show that the model $G(n,1)\oplus \dotsb \oplus G(n,1)$, with $G(n,1)$ repeated $d$ times, serves as a good approximation of the model of random $d$-regular graphs in the following sense.

\begin{theorem}
\label{thm:Contiguity-Disjoint-Matchings}
Consider $n$ even and $d\in [3,n^{1/10}]$.
Fix a sequence $(A_n)_{n\in 2\mathbb N}$ where $A_n$ is a set of $d$-regular graphs on $n$ vertices.
If $G(n,1)\oplus \dotsb \oplus G(n,1)\in A_n$ holds with high probability, then $G(n,d)\in A_n$ holds with high probability as well.
\end{theorem}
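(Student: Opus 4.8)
The plan is to compare the two models via the likelihood ratio of $G(n,1)^{\oplus d}:=G(n,1)\oplus\dots\oplus G(n,1)$ against $G(n,d)$ on the set $\cG_{n,d}$ of $d$-regular graphs on $[n]$, treating bounded and growing $d$ separately. For $H\in\cG_{n,d}$ write $f(H)$ for the number of ways to partition $E(H)$ into $d$ perfect matchings. Counting ordered $d$-tuples of pairwise edge-disjoint perfect matchings with union $H$ (there are $d!\,f(H)$ of them), together with the bijection splitting such a tuple into an ordered $a$-block and an ordered $(d-a)$-block, gives
\[
\Prob\bigl(G(n,1)^{\oplus d}=H\bigr)=\frac{f(H)}{F},\qquad F:=\sum_{H'\in\cG_{n,d}}f(H'),
\]
shows that this law is unchanged if the $d$ matchings are first grouped into blocks, and shows that $\Prob\bigl(G(n,1)\oplus G(n,d-1)=H\bigr)$ is proportional to $\mathrm{pm}(H)$, the number of perfect matchings of $H$. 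As $\Prob(G(n,d)=H)=1/\abs{\cG_{n,d}}$, setting $\bar f:=F/\abs{\cG_{n,d}}=\EE_{H\sim G(n,d)}f(H)$ we see that $\mathcal L\bigl(G(n,1)^{\oplus d}\bigr)$ has density $f(H)/\bar f$ against $\mathcal L\bigl(G(n,d)\bigr)$ wherever $f(H)>0$. (An alternative route would chain couplings via \Cref{thm:Coupling-Sprinkling-2-Random}, but that result needs both summands to grow and so cannot be fed the splitting into single matchings, making some direct estimate of the above ratio unavoidable.)

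Now suppose \Cref{thm:Contiguity-Disjoint-Matchings} fails, witnessed by a sequence $(A_n)_{n\in 2\mathbb N}$ with $\Prob(G(n,1)^{\oplus d}\in A_n)\to 1$ while $\Prob(G(n,d)\notin A_n)\ge\delta>0$ along a subsequence; refining it so that $d=d(n)$ is either eventually constant or tends to infinity reduces us to two cases. If $d\equiv c$, then $c\ge 3$, so $(c,1,1)\ne(2,1,1)$ and the contiguity of Molloy, Robalewska, Robinson and Wormald~\cite{MRRW97} transfers the high-probability property from $G(n,1)^{\oplus c}$ to $G(n,c)$, a contradiction; so assume $d\to\infty$ with $d\le n^{1/10}$. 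In this regime it suffices to prove the one-sided concentration: for some absolute constant $C$,
\[
\Prob_{H\sim G(n,d)}\bigl(f(H)\ge\bar f/C\bigr)=1-o(1).
\]
Indeed, writing $\mu=\mathcal L(G(n,d))$, $\nu=\mathcal L(G(n,1)^{\oplus d})$ and $E=\{f\ge\bar f/C\}$, we have $\mu(E^{\mathrm c})=o(1)$ and $\mathrm d\mu/\mathrm d\nu=\bar f/f\le C$ on $E$, hence $\mu(B_n)\le C\,\nu(B_n)+\mu(E^{\mathrm c})\to 0$ for all $B_n$ with $\nu(B_n)\to 0$; taking $B_n=A_n^{\mathrm c}$ contradicts $\Prob(G(n,d)\notin A_n)\ge\delta$.

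To prove the displayed concentration I would peel perfect matchings off $H\sim G(n,d)$: from the recursion $d\,f(H)=\sum_{M\subseteq H}f_{d-1}(H\setminus M)$ (sum over perfect matchings $M$ of $H$, with $f_k$ counting partitions into $k$ perfect matchings), removing a uniform $M$ leaves a $(d-1)$-regular graph whose law is that of $G(n,d-1)$ up to a reweighting that is mild because $d\ll n$; iterating down to a slowly growing cutoff $k_0$ yields $\log f(H)=\sum_{k=k_0+1}^{d}\log\mathrm{pm}(H_k)+\log f(H_{k_0})$, with $H_k$ the graph left after $d-k$ removals and the last term controlled crudely. The heart of the matter is then a multiplicative concentration estimate: for each $\omega(1)=k\le n^{1/10}$, $\mathrm{pm}(H_k)$ lies within a factor $1+o(1/d)$ of its typical value with probability $1-o(1/d)$, so that multiplying over the $O(n^{1/10})$ levels gives $f(H)=(1+o(1))\bar f$ with probability $1-o(1)$, which is stronger than what is needed. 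This would be obtained from the refined form of Gao's bounds on the number of perfect matchings of random regular graphs~\cite{Gao23} together with a switching argument on $\cG_{n,k}$: a single switching operation alters $\mathrm{pm}$ by a multiplicative $1+O(1/n)$ away from an exponentially small exceptional set, so $\log\mathrm{pm}$ is Lipschitz along the switching chain and concentrates around its mean.

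The main obstacle is precisely this per-level concentration, needed to relative precision $o(1/d)$ with $d$ as large as $n^{1/10}$ and required to survive the repeated re-conditioning of the peeling --- at stage $k$ the relevant law is only proportional to $\mathrm{pm}$ against $G(n,k)$, and the peeled graph is only approximately $k$-regular until the removed matchings are accounted for. It is the accumulation of $\Theta(n^{1/10})$ such per-level errors, rather than any single step, that forces the restriction $d\le n^{1/10}$; and the constant-degree endpoint is exactly where this multiplicative concentration of the number of $1$-factorisations breaks down --- it then fluctuates by constant factors, reflecting short-cycle counts --- so that there one has no choice but to invoke \cite{MRRW97}.
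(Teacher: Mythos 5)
Your setup (identifying the density of $G(n,1)^{\oplus d}$ against $G(n,d)$ as $f(H)/\bar f$, where $f$ counts $1$-factorisations; reducing to the constant and $d\to\infty$ subcases; and observing that for $d\to\infty$ it suffices to bound $\bar f/f$ on a set of full $\mu_d$-measure) is correct and shows good understanding of what the theorem is asking. The constant-$d$ base case via Molloy--Robalewska--Robinson--Wormald is exactly what the paper uses (their \Cref{thm:const}). But the core of your $d\to\infty$ argument has a genuine gap.

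Your display $\log f(H)=\sum_{k>k_0}\log\mathrm{pm}(H_k)+\log f(H_{k_0})$ is not an identity, even along the random peeling you describe. The correct unrolling of the recursion $k\,f_k(H_k)=\sum_{M}f_{k-1}(H_k\setminus M)$ is
\[
f_d(H)\;=\;\mathbb{E}_{\text{greedy peel}}\!\left[\frac{k_0!}{d!}\prod_{k=k_0+1}^{d}\mathrm{pm}(H_k)\cdot f_{k_0}(H_{k_0})\right],
\]
a \emph{nested expectation} over the uniform choice of $M_k$ at each level, conditional on $H$. Thus $\log f(H)$ is a logarithm of an expectation of a product, and turning this into a sum of $\log\mathrm{pm}(H_k)$ terms requires (at minimum) multiplicative concentration of the inner factors uniformly over all peeling trajectories from the given $H$ --- concentration \emph{conditional on $H$}, which is a very different statement from the unconditional per-level estimate you invoke. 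Relatedly, the claim that the peeled graph $H_k$ ``has the law of $G(n,k)$ up to a mild reweighting'' is unjustified: removing a uniform perfect matching from $H\sim G(n,d)$ produces a $(d-1)$-regular graph whose density against $\mu_{d-1}$ involves sums of $1/\mathrm{pm}$ over the completions, and this reweighting compounds at every level. You flag this yourself, but the obstacle you name (``the relevant law is only proportional to $\mathrm{pm}$'') is both imprecise and not actually resolved. Finally, the switching-Lipschitz concentration of $\log\mathrm{pm}$ is itself a substantive unproved claim, distinct from Gao's moment bounds.

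The paper's route deliberately never touches the $1$-factorisation count $f$. It defines $\lambda_i=\mu_i\oplus\nu_{d-i}$ (a genuine $G(n,i)$ with the remaining matchings held as $\nu_{d-i}$), jumps from $\nu_d=\lambda_0$ to $\lambda_{d_1}$ for a constant $d_1$ using Wormald's contiguity plus \Cref{cor:cont_Gnd}(b), and then chains $\dtv(\lambda_i,\lambda_{i+1})\le O\!\left(\dtv(\mu_i\oplus\mu_1,\mu_{i+1})\right)=O(i^{-1.1})$ one matching at a time via \Cref{cor:dTV}. That TV bound is obtained from the concentration of $\mathrm{pm}$ in $G(n,i+1)$ (\Cref{prop:PM-counting}) passed through Strassen's theorem (\Cref{thm:strassen}), so only concentration of the number of perfect matchings --- never of $f$ --- is required, and the intermediate laws $\lambda_i$ are exact rather than the drifting laws of a peeling chain. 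In short, the paper replaces your likelihood-ratio telescoping (from the back, through $f$) by a TV-telescoping (from the front, through $\mathrm{pm}$), which sidesteps both the expectation-of-product issue and the evolving-law issue that your sketch leaves open.
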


We remark that, with some more effort, the range of values for $d$ in \Cref{thm:Coupling-Sprinkling-2-Random,thm:Contiguity-Disjoint-Matchings} may be slightly improved but the current estimates on the variance of the number of perfect matchings in random regular graphs (see \cite[Theorem~10]{Gao23} needed for our \Cref{prop:Estimate-E[(Y-Y*)^2]}) do not allow us to go beyond $d\le n^{1/7-\eps}$.
We restrict our analysis to the range $d\in [3,n^{1/10}]$ for the sake of simplicity.
In general, we believe that \Cref{thm:Contiguity-Disjoint-Matchings} as well as its reciprocal implication should hold for $d\in [3,n-3]$.

\begin{conjecture}
Consider $n$ even and $d\in [3,n-3]$.
Fix a sequence $(A_n)_{n\in 2\mathbb N}$ where $A_n$ is a set of $d$-regular graphs on $n$ vertices.
Then, $G(n,1)\oplus \dotsb \oplus G(n,1)\in A_n$ holds with high probability if and only if $G(n,d)\in A_n$ holds with high probability.
\end{conjecture}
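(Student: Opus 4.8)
The plan is to follow the blueprint of the proof of \Cref{thm:Contiguity-Disjoint-Matchings} and to reduce the biconditional to a single variance estimate. For a $d$-regular graph $G$ on $n$ vertices, let $N(G)$ be the number of ordered tuples $(M_1,\dots,M_d)$ of pairwise edge-disjoint perfect matchings of $K_n$ with $M_1\cup\dots\cup M_d=E(G)$. Since sampling $G(n,1)\oplus\dots\oplus G(n,1)$ ($d$ copies) amounts to conditioning $d$ independent uniform perfect matchings to be pairwise edge-disjoint and taking their union, the resulting distribution is uniform over valid ordered tuples, whence
\[
\Prob\bigl(G(n,1)\oplus\dots\oplus G(n,1)=G\bigr)=\frac{N(G)}{\mu}\,\Prob\bigl(G(n,d)=G\bigr),\qquad \mu:=\EE\bigl[N(G(n,d))\bigr],
\]
the expectation being over the uniform measure on $d$-regular graphs on $n$ vertices. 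Consequently, for any event $B$ one has $\Prob(G(n,1)\oplus\dots\oplus G(n,1)\in B)=\EE[N(G(n,d))\,\mathbf{1}_{G(n,d)\in B}]/\mu$. The two directions then follow from standard inequalities: Cauchy--Schwarz bounds this quantity by $\sqrt{\EE[N^2]/\mu^2}\cdot\sqrt{\Prob(G(n,d)\in B)}$, so $\EE[N^2]=O(\mu^2)$ transfers ``with high probability'' from $G(n,d)$ to $G(n,1)\oplus\dots\oplus G(n,1)$; and splitting on the event $\{N\ge\mu/2\}$ gives $\Prob(G(n,d)\in B)\le \tfrac{2}{\mu}\EE[N\mathbf{1}_{G(n,d)\in B}]+\Prob(N<\mu/2)$, so a lower-tail bound $\Prob(N<\mu/2)=o(1)$ transfers it in the other direction. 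Both inputs are implied by $\mathrm{Var}(N(G(n,d)))=o(\mu^2)$, which is therefore the whole target, now for every $3\le d\le n-3$.

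To control $\mathrm{Var}(N)$, I would proceed iteratively via $N(G)=\sum_{M}N_{d-1}(G\setminus M)$, the sum ranging over perfect matchings $M$ of $G$ and $N_{d-1}$ counting ordered decompositions into $d-1$ matchings. For the first moment one uses that $G(n,d)\setminus M$ is close in distribution to $G(n,d-1)$ --- precisely the kind of decomposition statement behind \Cref{thm:Coupling-Inclusion,thm:Coupling-Sprinkling-2-Random} --- so that $\mu$ telescopes into a product of expected numbers of perfect matchings of $G(n,d')$ for $d'\le d$. For the second moment one expands $N(G)^2=\sum_{M,M'}N_{d-1}(G\setminus M)N_{d-1}(G\setminus M')$ and must estimate, for each intersection pattern of $M$ and $M'$, the correlation between the perfect-matching counts of $G\setminus M$ and $G\setminus M'$; this is the role played by Gao's switching analysis and by \Cref{prop:Estimate-E[(Y-Y*)^2]}, now invoked along the whole removal process rather than a bounded number of times. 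Keeping the accumulated error over $d$ removals down to $o(1)$ --- presumably by running a martingale/Azuma-type argument on the matching-removal filtration in place of a per-step union bound --- is what would be needed to move past the $d\le n^{1/10}$ barrier.

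The main obstacle is the one flagged in the remark after \Cref{thm:Contiguity-Disjoint-Matchings}: the entire argument hinges on sufficiently sharp estimates for the mean and variance of the number $Y$ of perfect matchings of $G(n,d')$, uniformly over $3\le d'\le n-3$, whereas \cite[Theorem~10]{Gao23} only delivers such estimates for $d'\le n^{1/7-o(1)}$. For $d'$ in the dense range I would turn to the complex-analytic enumeration and cumulant-expansion methods for random regular graphs of Isaev and McKay, which already yield precise counts of factors and of sparse subgraphs and underlie the known dense cases of \Cref{conj:GIM-Inclusion} and \Cref{conj:IMSZ-coupling-sum}; the regime $d'\ge n-n^{\Omega(1)}$ transfers by complementation, $G(n,d')\leftrightarrow G(n,n-1-d')$; and the intermediate window would have to be bridged by a hybrid estimate. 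Assembling a single variance bound valid for all $d'$, and then feeding it through the $d$-fold iteration without loss, is the step I expect to be genuinely hard, and is exactly where the present techniques fall short of the full conjecture.
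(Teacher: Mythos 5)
Since the statement you are proving is an open conjecture, the paper offers no proof, only the partial one-directional result (\Cref{thm:Contiguity-Disjoint-Matchings}) restricted to $d\le n^{1/10}$, proved by a route quite different from yours: the paper invokes \Cref{thm:const} (the MRRW97 small-subgraph-conditioning result) as a black box for the bounded-degree initial block and then iterates the total-variation bound of \Cref{cor:dTV} across matchings. Your proposal, by contrast, attempts a direct second-moment computation on the number $N(G)$ of ordered $1$-factorisations, and this is where the plan breaks.

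Your framing of the Radon--Nikodym derivative is correct: $\nu_d(G)\propto N(G)$ and your Cauchy--Schwarz/lower-tail split is the standard way to reduce contiguity to moment bounds. But the target you then set, $\mathrm{Var}\!\bigl(N(G(n,d))\bigr)=o\bigl(\EE[N]^2\bigr)$ ``for every $3\le d\le n-3$'', is \emph{provably false} in part of that range. For any fixed $d\ge 3$ the ratio $\EE[N^2]/\EE[N]^2$ converges to a constant strictly larger than $1$, since $N/\EE[N]$ has a nondegenerate lognormal-type limit governed by short-cycle counts. This is precisely the phenomenon that forces Molloy, Robalewska, Robinson and Wormald~\cite{MRRW97} to use the small-subgraph-conditioning method rather than Chebyshev: with a nondegenerate limit for $N/\EE[N]$, the lower tail $\Prob(N<\mu/2)$ converges to a positive constant, so your split $\mu_d(B)\le\frac{2}{\mu}\EE[N\mathbf{1}_B]+\Prob(N<\mu/2)$ does not send $\mu_d(B)\to 0$ even when $\nu_d(B)\to 0$. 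Note that this is exactly the implication the paper \emph{does} prove, so your method fails even on the direction that is already a theorem. More broadly, contiguity and $\mathrm{Var}=o(\EE^2)$ are not the same thing: the paper itself gives the $\mathrm{Ber}(1/3)$ versus $\mathrm{Ber}(2/3)$ example to warn against conflating contiguity with vanishing total variation, and the same caution applies here, as small-subgraph conditioning establishes contiguity \emph{despite} a non-vanishing normalised variance. What a viable version of your plan would need, at minimum, is a replacement of the Chebyshev step by a full small-subgraph-conditioning argument for the ordered $1$-factorisation count, carried out uniformly over $d$; this is exactly the extension of \Cref{thm:const} to growing $d$ that the paper identifies in its concluding remarks as the missing ingredient. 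The martingale and complex-analytic ideas you float for large $d$ are reasonable directions for the dense regime, but they do not touch the fixed-$d$ obstruction above.

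A smaller point: the iterative step $N(G)=\sum_M N_{d-1}(G\setminus M)$ quietly replaces $1$-factorisation counts by perfect-matching counts $Y$, and the concentration input you invoke (\Cref{prop:Estimate-E[(Y-Y*)^2]}, Gao's Theorem~10) concerns $Y$, not $N$. Even granting a uniform-in-$d'$ version of that estimate, the passage from ``each conditional removal step is concentrated'' to ``$\mathrm{Var}(N)=o(\mu^2)$'' is not automatic; the correlations across steps are precisely where the lognormal correction accumulates, and a per-step concentration bound does not rule it out.
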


\paragraph{Outline of the main ideas.}

The proofs of \Cref{thm:Coupling-Sprinkling-2-Random,thm:Contiguity-Disjoint-Matchings} are based on two results: \Cref{cor:cont_Gnd} and \Cref{cor:dTV}. 
\Cref{cor:cont_Gnd} specialises a more general technical result, \Cref{prop:general}, which establishes that, for measures $\eta_n$, $\tilde\eta_n$ and $\tau_n$ on the space of $n$-vertex graphs, 
if the measures $\eta_n, \tilde\eta_n$ are close (in total variation distance, or contiguous) and $\tau_n$ satisfies a weak technical assumption, then the measures $\eta_n\oplus \tau_n, \tilde\eta_n\oplus \tau_n$ are also close in the corresponding sense.
\Cref{cor:cont_Gnd} then specialises this observation to the setting where $\tau_n$ has distribution $G(n,d_1)\oplus G(n,d_2)\oplus\dotsb\oplus G(n,d_k)$ for a wide range of parameters $k$ and $d_1,\ldots,d_k$.

\Cref{cor:dTV} bounds the total variation distance between $G(n,d)\oplus G(n,1)$ and $G(n,d+1)$ whenever $d\le n^{1/10}$.
We observe that a weaker version of \Cref{cor:dTV} can be derived directly from a work of Gao~\cite{Gao23}. 
However, the explicit error bounds one can extract from Gao's proof are inconveniently large, and do not allow us to iterate the result to conclude that $G(n,d_1)\oplus G(n,1)\oplus\dotsb\oplus G(n,1)$ and $G(n,d+1)$ are close in total variation distance for $d_1 = o(d)$.
To overcome this inconvenience, we build on Gao's approach, which itself relies on the orthogonal decomposition and projection method introduced by Janson~\cite{Jan94}.
An application of this technique allows us to approximate well the number of perfect matchings via a linear regression involving the number of triangles, which is significantly easier to analyse due to the locality of the triangle count.
Computing the third and the fourth moments of the number of triangles in the random $d$-regular graph (see \Cref{lem:estimate-2}) allows us to sharpen concentration estimates coming from the second moment method.
In turn, this enables us to refine Gao's bounds through the orthogonal decomposition and projection method.
Equipped with our knowledge about perfect matchings in random regular graphs, we analyse the total variation distance between $G(n,d)\oplus G(n,1)$ and $G(n,d+1)$ by using a result of McKay~\cite{McK85} (see~\Cref{thm:McKay}) about counting sparse graphs with given degree sequence in dense host graphs, and a convenient consequence of Strassen's theorem observed by Isaev, McKay, Southwell and Zhukovskii~\cite{IMcKSZ23} (see~\Cref{thm:strassen}). 

Finally, while \Cref{thm:Coupling-Inclusion} can be derived from \Cref{cor:cont_Gnd} and \Cref{cor:dTV} when $d_1=\omega(1)$, the two results are insufficient when $d_1 = O(1)$. 
Indeed, when $d_1 = O(1)$, the bound on the total variation distance between $G(n,d)\oplus G(n,1)$ and $G(n,d+1)$ given by \Cref{cor:dTV} is a positive constant and, therefore, does not provide an inclusion-wise monotone coupling with high probability.
In this case, for all $k\ge 1$, our approach relies on developing an alternative sampling procedure (see~\Cref{def:procedure-sample}) to approximately generate $G(n,1)\oplus\ldots\oplus G(n,1)$, with $G(n,1)$ repeated $kd_1$ times, in blocks of $d_1$ perfect matchings.  
To do so, we rely on a random overlay idea: 
we show that, for $m = O(1)$ unlabelled regular graphs with degrees of order $O(1)$, by attributing the same vertex labels to them uniformly at random and independently, the resulting labelled graphs are edge-disjoint with probability bounded away from $0$ and depending only on $m$ and the degree of each graph (see~\Cref{lem:disjoint}).
Then, we show that, for every $\eps > 0$, one can choose a large constant $k= k(\eps)\ge 1$ and couple $G(n,d_1)$ and the graph generated by the alternative procedure so that $G(n,d_1)$ coincides with a graph corresponding to one of the $k$ blocks with probability at least $1-\eps$ (see~\Cref{lem:coupleXY,lem:complete_couple}).
We note that, with very minor modifications, \Cref{lem:coupleXY} provides a general coupling tool of measures $\mu$ and $\nu\times \ldots\times \nu$ where $\mu,\nu$ are absolutely continuous measures on the same space, and may be of independent interest.

\paragraph{Organisation of the paper.} In \Cref{sec:prelims}, we state and establish some preliminary results.
\Cref{sec:technical} is dedicated to the proof of \Cref{prop:PM-counting} which is used to derive \Cref{thm:Coupling-Sprinkling-2-Random,thm:Contiguity-Disjoint-Matchings} in \Cref{sec:proofs45}.
Section~\ref{sec:coupling} contains some coupling results instrumental in the proof of \Cref{thm:Coupling-Inclusion} in \Cref{sec:proofs}.
We conclude with a brief discussion in \Cref{sec:conclusion}.

\section{Notation and preliminaries}\label{sec:prelims}
We first recall (mostly classic) notation and terminology used in the sequel.

\subsection{Notation and terminology}\label{sec:notations}
\paragraph{General notation.} For a positive integer $n$, we denote $[n] = \{1,\ldots,n\}$. For positive integers $m\le n$, we also write $\binom{[n]}{m}$ for all subsets of $[n]$ of size $m$.
For real numbers $a,b,c$ with $c > 0$, we write $a = b\pm c$ to mean that $a\in [b-c,b+c]$. 
Floor and ceiling notation is omitted when rounding is insignificant for the argument.

Given a graph $G$, we write $V(G)$ for its vertex set and $E(G)$ for its edge set.

For integers $n\ge 1$ and $d\in [1,n-1]$, we denote by $\cG_d = \cG_d(n)$ the family of $d$-regular graphs on $n$ vertices.
We denote by $\mu_{d,n}$ the uniform probability distribution on the family $\cG_d(n)$, but often omit the index $n$ and write $\mu_d$ for better readability.
For integers $k\ge 1$ and $d_1,\ldots,d_k\ge 1$ with $d_1+\dotsb+d_k\le n-1$, we denote by $G(n,d_1)\oplus \dotsb \oplus G(n,d_k)$ the union of independent copies of $G(n,d_1),\ldots,G(n,d_k)$ on the same vertex set conditionally on being edge-disjoint.
We denote by $\nu_{d,n}$ the probability distribution on $\cG_d(n)$ such that $\nu_{d,n}(G)$ is proportional to the number of 1-factorisations of $G$.
Again, we often write $\nu_d$ for better readability. 
Given two measures $\mu$ and $\nu$ on the same family of graphs $\mathcal G$, $\mu\oplus\nu$ is a measure on $\cG$ such that, for every $G\in \mathcal G$,
$(\mu\oplus \nu)(G)$ is proportional to 
\[
\sum_{\substack{G_1,G_2 \in \mathcal{G}\\ G_1\cup G_2 = G\\ G_1\cap G_2 = \emptyset}}\mu(G_1)\nu(G_2)\,,
\]
where $G_1\cap G_2 = \emptyset$ means that the edge sets of $G_1$ and $G_2$ are disjoint.

We work with a measurable space $(\Omega,\mathcal F)$ where $\Omega$ is always a finite or countable set. By default, we assume $\mathcal F$ to be the family of all subsets of $\Omega$ and omit it from the notation.
A sequence of events $(E_n)_{n\ge 1}$ is said to hold \emph{with high probability} or \emph{whp} if $\mathbb P(E_n)\to 1$ as $n\to \infty$.

We use standard asymptotic notations.
For functions $f=f(n)$ and $g=g(n)$ (with $g$ non-negative), we write $f=O(g)$ to say that there is a constant $C$ such that $|f(n)|\le Cg(n)$, and we write $f=\Omega(g)$ to say that there is a constant $c>0$ such that $f(n)\ge cg(n)$ for sufficiently large $n$.
We write $f=\Theta(g)$ to say that $f=O(g)$ and $f=\Omega(g)$, and we also write $f=o(g)$ or $g = \omega(f)$ to say that $f(n)/g(n)\to 0$ as $n\to\infty$.

\paragraph{Coupling.} For two (probability) distributions $\mu_1$ and $\mu_2$ on the measurable spaces $\Omega_1$ and $\Omega_2$, respectively, a \emph{coupling} of $\mu_1$ and $\mu_2$ is a (probability) distribution on the space $\Omega_1\times \Omega_2, $ with first marginal $\mu_1$ and second marginal $\mu_2$.

\paragraph{Total variation distance.} For two probability distributions $\mu_1$ and $\mu_2$ defined on a common measurable space $\Omega$, the \emph{total variation distance} between $\mu_1$ and $\mu_2$ is defined as
\begin{align*}
\dtv(\mu_1,\mu_2)
&= \sup_{A} |\mu_1(A)-\mu_2(A)|,
\end{align*} 
where the supremum is taken over all subsets of $\Omega$.
Moreover, since we assume the sample space $\Omega$ to be finite or countable, the above definition is equivalent to
\begin{equation}\label{eq:dTV}
\dtv(\mu_1,\mu_2) = \frac{1}{2}\sum_{x\in \Omega} |\mu_1(x)-\mu_2(x)|.
\end{equation} 

\paragraph{Contiguity.} Fix a sequence of measurable spaces $(\Omega_n)_{n\ge 1}$. For every $n\ge 1$, consider two probability measures $\mu_n$ and $\nu_n$ on the space $\Omega_n$.
We say that $(\mu_n)_{n\ge 1}$ and $(\nu_n)_{n\ge 1}$ are \emph{contiguous} if, for every sequence of events $(A_n)_{n\ge 1}$, 
we have that
\[\mu_n(A_n)\xrightarrow[n\to \infty]{} 1\iff \nu_{n}(A_{n})\xrightarrow[n \to \infty]{} 1\,.\]
For convenience, we will often abuse terminology and say that $\mu_n$ and $\nu_n$ are contiguous. 
It is easy to see that $\mu_n$ and $\nu_n$ are contiguous if $\dtv(\mu_n, \nu_n) = o(1)$.
However, the converse does not hold in general, as contiguity only concerns sequences of events which hold whp.
For instance, by setting $\Omega_n = \{0,1\}$, $\mu_n = \mathrm{Ber}(1/3)$ and $\nu_n = \mathrm{Ber}(2/3)$ for all $n\ge 1$, the measures $\mu_n$ and $\nu_n$ are contiguous but $\dtv(\mu_n, \nu_n) = 1/3$ for all $n\ge 1$. 

\subsection{Preliminary results}\label{sec:prelim_results} 

\subsubsection{\emph{\textbf{Helpful results from the literature}}}
We begin this section with a theorem of McKay~\cite{McK85} determining the number of graphs with a given (sparse) degree sequence $g_1,\ldots,g_n$ in the complement of a (sparse) graph $X$.

\begin{theorem}[Theorem~4.6 in~\cite{McK85}]\label{thm:McKay}
Fix $\varepsilon\in (0,2/3)$, a graph $X$ with vertex set $[n]$ and degree sequence $x_1,\ldots,x_n$, and integers $g_1,\ldots,g_n\in [0,n-1]$ with even sum. Define
\[e(G) = \frac{1}{2}\sum_{i=1}^n g_i,\quad \lambda = \frac{1}{4e(G)} \sum_{i=1}^n g_i(g_i-1)\quad \text{and}\quad \mu = \frac{1}{2e(G)} \sum_{ij\in E(X)} g_ig_j.\]
Suppose that 
\[\hat\Delta := 2+ \max_{i\in [n]} g_i\bigg(\frac{3}{2}\max_{i\in [n]} g_i + \max_{i\in [n]} x_i + 1\bigg)\le \eps\sum_{i=1}^n g_i.\]
Then, the number of graphs with degree sequence $g_1,\ldots,g_n$ on the vertex set $[n]$ which are edge-disjoint from $X$ is equal to
\[\frac{(2e(G))!}{e(G)! \, 2^{e(G)}\prod_{i=1}^n g_i!} \exp\left(- \lambda - \lambda^2 - \mu - O\bigg(\frac{\hat\Delta^2}{e(G)}\bigg)\right),\]
where the constant in the $O(\cdot)$ term is uniform over the choice of all parameters.
\end{theorem}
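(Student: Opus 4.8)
\emph{Proof proposal.}
The plan is to pass to the configuration (pairing) model and estimate the probability that a uniformly random pairing projects to a simple graph avoiding $X$. Set $N = 2e(G) = \sum_{i=1}^n g_i$, take a ground set $W$ of $N$ points partitioned into cells $W_1,\dots,W_n$ with $\abs{W_i} = g_i$, and call a perfect matching on $W$ a \emph{configuration}. The number of configurations is exactly $N!/\big((N/2)!\,2^{N/2}\big) = (2e(G))!/\big(e(G)!\,2^{e(G)}\big)$, and every simple graph $H$ on $[n]$ with degree sequence $g_1,\dots,g_n$ lifts to precisely $\prod_{i} g_i!$ configurations, this lift avoiding $X$ exactly when $H$ does. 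Hence the quantity to be estimated equals $(2e(G))!/\big(e(G)!\,2^{e(G)}\prod_i g_i!\big)$ multiplied by $\Prob(\mathcal{C}^{\ast})$, where $\mathcal{C}^{\ast}$ is the event that a uniformly random configuration has no loop, no parallel edges, and no edge whose projection lies in $E(X)$. It therefore remains to show $\Prob(\mathcal{C}^{\ast}) = \exp\!\big(-\lambda - \lambda^2 - \mu - O(\hat\Delta^2/e(G))\big)$.

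For this I would track three defect statistics of a random configuration: the number $L$ of loops, the number $D$ of unordered pairs of parallel edges, and the number $M$ of edges projecting into $E(X)$; note $\mathcal{C}^{\ast} = \{L = D = M = 0\}$. Elementary first-moment computations give $\EE L = \tfrac{1}{N-1}\sum_i \binom{g_i}{2} = \lambda\,(1+O(1/e(G)))$, $\EE M = \tfrac{1}{N-1}\sum_{ij\in E(X)} g_i g_j = \mu\,(1+O(1/e(G)))$, and $\EE D = \big(\tfrac{1}{N}\sum_i\binom{g_i}{2}\big)^2(1+O(\hat\Delta/e(G))) = \lambda^2(1+O(\hat\Delta/e(G)))$; using $\hat\Delta \le \eps N$ one checks $\lambda,\lambda^2,\mu$ are all $O(\hat\Delta)$. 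The core of the argument is to show that the joint factorial moments satisfy $\EE\big[(L)_a\,(D)_b\,(M)_c\big] = \lambda^{a}\,\lambda^{2b}\,\mu^{c}\exp\!\big(O(\hat\Delta^2/e(G))\big)$ uniformly over the relevant ranges of $a,b,c$; by the method of moments (Brun's sieve) this makes $(L,D,M)$ jointly close to a triple of independent Poisson variables with means $\lambda,\lambda^2,\mu$, and evaluating at $(0,0,0)$ yields the claimed form of $\Prob(\mathcal{C}^{\ast})$.

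Equivalently, and more robustly for the error analysis, I would argue via switchings: for each defect type introduce an operation removing exactly one defect of that type without creating new defects, count the forward and backward choices, and show that the ratio of the number of configurations with $\ell+1$ defects of a given type to those with $\ell$ is $\lambda/(\ell+1)$, $\lambda^2/(\ell+1)$, or $\mu/(\ell+1)$ up to a multiplicative factor $1 + O(\hat\Delta/e(G))$; telescoping across the $O(\lambda + \lambda^2 + \mu) = O(\hat\Delta)$ relevant levels produces the aggregate error $O(\hat\Delta^2/e(G))$ in the exponent, and a cross-switching argument shows the three statistics decouple at the same cost. The main obstacle is precisely this uniform error bookkeeping. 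The loop and $X$-edge contributions are first order and straightforward, but the double-edge term $\lambda^2$ is a second-order effect: the switching that deletes a parallel pair and re-pairs the four involved points against two other configuration edges must be counted to within a $(1+O(\hat\Delta/e(G)))$ factor, which requires subtracting the rare configurations where the re-pairing itself lands on a loop, a parallel edge, or an edge of $X$, and then checking that these corrections, together with the interactions among the three switching types, all collapse into the single bound $O(\hat\Delta^2/e(G))$ stated in the theorem, with constants independent of $X$ and of $g_1,\dots,g_n$. Once this uniformity is secured, assembling the pieces is routine.
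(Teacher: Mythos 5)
The paper does not prove this theorem; it is quoted verbatim from McKay's 1985 paper and used as a black box (cited as Theorem~4.6 in \cite{McK85}), so there is no in-paper proof to compare your attempt against. That said, your sketch — pass to the configuration model, reduce the count to a probability that a uniform pairing has no loops, no double edges, and no edge of $X$, then establish Poisson-type decoupling of the three defect statistics $(L,D,M)$ via factorial moments or switchings — is essentially the strategy of McKay's original argument, and the pieces you identify (the $\exp(-\lambda-\lambda^2-\mu)$ main term as $\Prob(L=D=M=0)$ for a near-independent Poisson triple, the hypothesis $\hat\Delta\le\eps N$ as what controls the per-switching error, the telescoping over defect levels) are the right pieces. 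Two cautions if you were to write this out in full. First, your claim $\EE D=\lambda^2(1+O(\hat\Delta/e(G)))$ is not correct as a \emph{relative} error bound — for instance, one vertex of degree $g$ among otherwise degree-one vertices gives $\EE D/\lambda^2 = \Theta(1)$ away from $1$ — but the \emph{additive} bound $|\EE D-\lambda^2|=O(\hat\Delta^2/e(G))$ does hold, and that is what the exponent actually needs. Second, calling the final assembly ``routine'' undersells where the difficulty lives: the entire content of the theorem is the uniform $O(\hat\Delta^2/e(G))$ error term with a constant independent of $X$, the degree sequence, and $n$, under only the hypothesis $\hat\Delta\le\eps\sum_i g_i$; securing that uniformity across the interacting switching chains (including the second-order double-edge switchings you flag) constitutes the bulk of McKay's paper rather than a finishing step.
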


The following preliminary result concerns contiguity between models of random regular graphs of bounded degree.
It is a particular case of Corollary~4.17 in~\cite{Wor99}; see also~\cite{Jan95,MRRW97}.

\begin{theorem}[see Corollary~4.17 in~\cite{Wor99}]\label{thm:const}
Fix $k\ge 2$ and $d_1,\ldots,d_k\ge 1$ with $d_1+\dotsb+d_k=d\ge 3$.
For even $n$, the models $G(n,d)$ and $G(n,d_1)\oplus \dotsb\oplus G(n,d_k)$ are contiguous.
\end{theorem}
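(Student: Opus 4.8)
The standard route to a contiguity statement of this kind is the \emph{small subgraph conditioning method} of Robinson and Wormald, in the form developed by Janson~\cite{Jan95} and worked out for superpositions of regular graphs by Molloy, Robalewska, Robinson and Wormald~\cite{MRRW97} (this is also the proof behind Corollary~4.17 of~\cite{Wor99}), and I would follow it. First I would pass to pairing (configuration) models: realise $G(n,d_1)\oplus\dots\oplus G(n,d_k)$ as the superposition on a common vertex set $[n]$ of $k$ independent pairing models of degrees $d_1,\dots,d_k$, conditioned on the union being simple, and realise $G(n,d)$ from the ordinary $d$-regular pairing model. Since $d$ is bounded, each of these pairings is simple with probability bounded away from $0$ (the multicoloured analogue of the classical estimate $\mathbb{P}(\text{simple})\to\exp(-(d^2-1)/4)$), and a short argument then shows that it suffices to prove the corresponding contiguity of the two pairing-level laws, where all relevant moments are combinatorially transparent.

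Second I would determine the short-cycle statistics. For fixed $j\ge 3$ let $C_j$ count the $j$-cycles. Under the uniform law $\mu_{d,n}$ one has the classical joint Poisson limit: $(C_3,C_4,\dots)$ converges in distribution to independent Poisson variables $(Z_3,Z_4,\dots)$ with $\mathbb{E}[Z_j]=\lambda_j:=(d-1)^j/(2j)$. Under the superposition law $\mu_n$ the same first-moment computation, now carried out by assigning to each cycle edge its part in $[k]$ and summing over colour patterns, gives $\mathbb{E}_{\mu_n}[C_j]\to\tfrac{1}{2j}\operatorname{tr}(M^j)$, where $M$ is the $k\times k$ colour-transition matrix with $M_{ii}=d_i-1$ and $M_{ii'}=d_{i'}$ for $i\ne i'$; as $M$ has eigenvalue $d-1$ with multiplicity one and $-1$ with multiplicity $k-1$, this equals $\tfrac{1}{2j}\bigl((d-1)^j+(k-1)(-1)^j\bigr)=\lambda_j(1+\delta_j)$ with
\[
\delta_j=\frac{(k-1)(-1)^j}{(d-1)^j}.
\]
A second-moment computation confirms a joint Poisson limit for $(C_j)_{j\ge3}$ under $\mu_n$ as well, with means $\lambda_j(1+\delta_j)$. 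This is exactly where $d\ge 3$ is used: for $d\ge3$ and $j\ge3$ one has $|\delta_j|\le(d-1)^{1-j}\le\tfrac{1}{4}$ (using $k\le d$), so $\delta_j>-1$, and $\sum_{j\ge3}\lambda_j\delta_j^2=\tfrac{(k-1)^2}{2}\sum_{j\ge3}(d-1)^{-j}/j<\infty$; both fail at $d=2$ (forcing $k=2$), where $\delta_3=-1$ and $\sum_j 1/j$ diverges, which is the familiar obstruction to contiguity of $G(n,2)$ and $G(n,1)\oplus G(n,1)$.

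Third I would bring in the likelihood ratio $W_n:=\mathrm{d}\mu_n/\mathrm{d}\mu_{d,n}$; explicitly, $W_n(G)$ is proportional to the number $N(G)$ of ways to colour $E(G)$ with $k$ colours so that colour class $i$ is a $d_i$-regular graph. Janson's small subgraph conditioning lemma~\cite{Jan95} requires three inputs: (a)~the joint Poisson limit of $(C_j)_{j\ge3}$ under $\mu_{d,n}$, classical; (b)~the convergence $\mathbb{E}_{\mu_{d,n}}\bigl[W_n\prod_j(C_j)_{a_j}\bigr]\to\prod_j\bigl(\lambda_j(1+\delta_j)\bigr)^{a_j}$ of the $W_n$-weighted factorial moments (falling factorials), which is the Poisson limit with convergence of moments under $\mu_n$ from the previous step; and (c)~$\mathbb{E}_{\mu_{d,n}}[W_n^2]\to\exp\bigl(\sum_{j\ge3}\lambda_j\delta_j^2\bigr)$. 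Granting these, the lemma gives $W_n\to W_\infty:=\prod_{j\ge3}(1+\delta_j)^{Z_j}\mathrm{e}^{-\lambda_j\delta_j}$ in $L^2(\mu_{d,n})$, and, since $\delta_j>-1$ for every $j$ forces $W_\infty>0$ almost surely, that $\mu_n$ and $\mu_{d,n}$ are mutually contiguous. As $\mu_{d,n}$ is the law of $G(n,d)$, this is precisely the claim.

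I expect the main difficulty to be input (c): showing $\mathbb{E}[N^2]/\mathbb{E}[N]^2\to\exp(\sum_{j\ge3}\lambda_j\delta_j^2)$ forces an asymptotic enumeration — precise enough to identify the limiting constant, not merely to bound it — of \emph{pairs} of $(d_1,\dots,d_k)$-colourings of a random $d$-regular graph, organised according to how the two colourings differ along short cycles (a switching-type analysis). This is the technical core of~\cite{Jan95,MRRW97}; by contrast, the reduction to pairing models, the cycle-count asymptotics, and the invocation of Janson's lemma are routine in the bounded-degree setting.
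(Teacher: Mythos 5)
Your proposal is correct and is precisely the small subgraph conditioning argument behind Corollary~4.17 of~\cite{Wor99} (via~\cite{Jan95,MRRW97}); the paper itself offers no proof of this statement but simply cites~\cite{Wor99}, so you and the paper are in effect invoking the same argument. One small imprecision: Janson's lemma under conditions (a)--(c) yields convergence \emph{in distribution} of the likelihood ratio $W_n$ to $W_\infty=\prod_{j\ge 3}(1+\delta_j)^{Z_j}\mathrm{e}^{-\lambda_j\delta_j}$ together with $\mathbb{E}[W_\infty]=1$ and $W_\infty>0$ a.s.\ (which already gives mutual contiguity), not $L^2$ convergence --- the latter would require a common probability space and is not what the method provides --- but this does not affect the conclusion.
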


Next, we state a coupling result from~\cite{IMcKSZ23} related to Strassen's theorem, see also \cite[Proposition 6]{Kop24}.

\begin{theorem}[Corollary 2.2 from~\cite{IMcKSZ23}]\label{thm:strassen}
Fix $\delta,\varepsilon\in [0,1]$ and a bipartite graph $G$ with parts $S,T$.
Suppose that $S$ contains at least $(1-\delta)|S|$ vertices of degree at least $(1-\eps)|E(G)|/|S|$, and similarly $T$ contains at least $(1-\delta)|T|$ vertices of degree at least $(1-\eps)|E(G)|/|T|$.
Then, there is a coupling $(X,Z)$ with $X$ uniformly distributed on $S$ and $Z$ uniformly distributed on $T$ such that $\mathbb P(XZ\notin G)\le 2\delta+\eps/(1-\eps)$.
\end{theorem}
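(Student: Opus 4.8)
The plan is to reduce the claim to the finite case of Strassen's theorem --- equivalently, to linear-programming duality for the transportation problem --- and then to verify the resulting Hall-type deficiency condition by a single double-counting estimate. Recall that if $\mu$ is a probability measure on a finite set $S$, $\nu$ a probability measure on a finite set $T$, $R\subseteq S\times T$ a relation, and $\eta\in[0,1]$, then there is a coupling $(X,Z)$ of $\mu$ and $\nu$ with $\mathbb P\big((X,Z)\notin R\big)\le\eta$ if and only if $\mu(A)-\nu(N_R(A))\le\eta$ for every $A\subseteq S$, where $N_R(A)=\{t\in T:(a,t)\in R\text{ for some }a\in A\}$. One direction is immediate from $\{X\in A,\ Z\notin N_R(A)\}\subseteq\{(X,Z)\notin R\}$, and the other is exactly Strassen's theorem (for finite spaces, transportation duality). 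I would apply this with $\mu,\nu$ the uniform measures on $S$ and $T$, with $R=E(G)$ regarded as a subset of $S\times T$ (so that $N_R(A)=N_G(A)$), and with $\eta=2\delta+\eps/(1-\eps)$, assuming as we may that $\eps<1$ and $|E(G)|\ge 1$, since otherwise the bound is trivial or vacuous.

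It then remains to show, for every $A\subseteq S$, that $|A|/|S|-|N_G(A)|/|T|\le 2\delta+\eps/(1-\eps)$. Let $S'\subseteq S$ and $T'\subseteq T$ be the sets of vertices of degree at least $(1-\eps)|E(G)|/|S|$ and at least $(1-\eps)|E(G)|/|T|$ respectively, so that $|S\setminus S'|\le\delta|S|$ and $|T\setminus T'|\le\delta|T|$. Counting the edges incident to $A$ and using $|A\cap S'|\ge|A|-\delta|S|$,
\[
\sum_{v\in A}\deg_G(v)\ \ge\ |A\cap S'|\,\frac{(1-\eps)|E(G)|}{|S|}\ \ge\ \big(|A|-\delta|S|\big)\,\frac{(1-\eps)|E(G)|}{|S|}.
\]
Since $\sum_{v\in S}\deg_G(v)=|E(G)|$ (each edge has a unique endpoint in $S$), the edges meeting $S\setminus A$ satisfy $\sum_{v\in S\setminus A}\deg_G(v)\le|E(G)|\big(1-(|A|-\delta|S|)(1-\eps)/|S|\big)$. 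On the other hand, every vertex of $T'\setminus N_G(A)$ sends all of its $\ge(1-\eps)|E(G)|/|T|$ edges into $S\setminus A$, so
\[
|T'\setminus N_G(A)|\,\frac{(1-\eps)|E(G)|}{|T|}\ \le\ \sum_{v\in S\setminus A}\deg_G(v)\ \le\ |E(G)|\Big(1-\big(|A|-\delta|S|\big)\tfrac{1-\eps}{|S|}\Big).
\]
Dividing through by $(1-\eps)|E(G)|$ gives $|T'\setminus N_G(A)|/|T|\le 1/(1-\eps)-|A|/|S|+\delta$; combining with $|T\setminus N_G(A)|\le|T\setminus T'|+|T'\setminus N_G(A)|$ yields $|N_G(A)|/|T|\ge 1-2\delta-1/(1-\eps)+|A|/|S|$, which rearranges to precisely the inequality above. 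Strassen's theorem now delivers the required coupling.

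I do not expect a genuine obstacle here: the whole proof is one double-counting estimate wrapped in a standard duality reformulation. The two points requiring a little care are (i) quoting Strassen's theorem in precisely the right one-sided, $\eta$-relaxed form; if one prefers to avoid the measure-theoretic statement, one may instead clear denominators --- replace each $s\in S$ by $|T|$ copies and each $t\in T$ by $|S|$ copies, joining copies whenever the originals are adjacent in $G$ --- and invoke the defect form of Hall's theorem (or max-flow--min-cut) directly; and (ii) handling the degenerate regimes ($\eps$ close to $1$, $|E(G)|$ small, or $|A|\le\delta|S|$), in each of which the asserted bound is trivial or vacuous. It is also worth recording that the deficiency condition needs to be checked only over subsets of $S$, which is exactly what the one-sided form of the duality provides.
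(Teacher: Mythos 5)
Your proof is correct. Note that the paper does not prove this statement itself; it imports it verbatim as Corollary~2.2 of Isaev--McKay--Southwell--Zhukovskii (with a pointer to Koperberg's notes on Strassen's theorem), so there is no ``paper's own proof'' to compare against. Your route --- reducing to the finite, one-sided, $\eta$-relaxed form of Strassen's theorem (equivalently, transportation LP duality or the defect form of Hall's theorem after clearing denominators), and then verifying the deficiency inequality $|A|/|S| - |N_G(A)|/|T| \le 2\delta + \eps/(1-\eps)$ by the double-count through $S'$, $T'$, and $T'\setminus N_G(A)$ --- is the standard argument and is, to my knowledge, exactly the one used in the cited sources. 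The arithmetic checks out: $1/(1-\eps)-1 = \eps/(1-\eps)$, and each inequality in the chain is justified. One pedantic remark: when $|E(G)|=0$ the degree hypothesis is vacuously satisfied with $\delta=\eps=0$ yet $\Prob(XZ\notin G)=1$, so the statement is (harmlessly) false in that degenerate case; your assumption $|E(G)|\ge 1$ is therefore genuinely needed rather than ``trivial or vacuous'' in the other direction, but this is a defect of the original statement, not of your argument, and is irrelevant to the applications in the paper.
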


The last lemma in this section guarantees that two measures $\mu,\nu$ on the same measurable space can be coupled in such a way that their outputs coincide with probability $1-\dtv(\mu,\nu)$, and ensures an additional independence property.
The version stated below (in particular, parts (ii) and (iii)) are slightly different from the original Theorem~2.12 in~\cite{Den12}: part (ii) differs due to the scaling of $\dtv$ in~\cite{Den12} while part (iii) follows from the proof of Theorem~2.12 in~\cite{Den12} rather than the statement itself.
\begin{lemma}[see Theorem 2.12 in~\cite{Den12} and its proof]\label{lem:max_coupling}
For any two probability distributions $\mu$ and $\nu$ on a finite or countable space $\Omega$, there exists a coupling $(X,Y)$ such that the following hold simultaneously:
\begin{itemize}
\setlength\itemsep{0.3em}
    \item[\emph{(i)}] $X$ has distribution $\mu$ and $Y$ has distribution $\nu$,
    \item[\emph{(ii)}] $X=Y$ with probability $1-\dtv(\mu,\nu)$,
    \item[\emph{(iii)}] $X$ and $Y$ are independent conditionally on the event $X\neq Y$, provided that the latter event holds with positive probability. In this case, for every $x\in \Omega$, the probability that $X=x$ conditionally on $X\neq Y$ is proportional to $\mu(x) - \min(\mu(x),\nu(x))$.
\end{itemize}
\end{lemma}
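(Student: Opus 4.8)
The plan is to build the coupling $(X,Y)$ explicitly as a mixture of two components: a "diagonal" part supported on $\{(x,x) : x \in \Omega\}$ that carries the shared mass of $\mu$ and $\nu$, and an "off-diagonal" part supported on $\{(x,y) : x \ne y\}$ that carries the leftover. Concretely, set $p = \sum_{x \in \Omega} \min(\mu(x),\nu(x)) = 1 - \dtv(\mu,\nu)$ (using the identity \eqref{eq:dTV}), and define probability measures $\alpha(x) = \min(\mu(x),\nu(x))/p$ on $\Omega$ (assuming $p > 0$; if $p = 0$ the measures have disjoint supports and the construction degenerates harmlessly), $\beta(x) = (\mu(x) - \min(\mu(x),\nu(x)))/(1-p)$ and $\gamma(y) = (\nu(y) - \min(\mu(y),\nu(y)))/(1-p)$ on $\Omega$ (assuming $p < 1$). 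First I would flip a coin with success probability $p$; on success, sample $Z \sim \alpha$ and output $(X,Y) = (Z,Z)$; on failure, independently sample $X \sim \beta$ and $Y \sim \gamma$ and output $(X,Y)$.

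The key steps are then routine verifications. First, I would check the marginals: $\mathbb{P}(X = x) = p\,\alpha(x) + (1-p)\beta(x) = \min(\mu(x),\nu(x)) + (\mu(x) - \min(\mu(x),\nu(x))) = \mu(x)$, and symmetrically $\mathbb{P}(Y = y) = \nu(y)$, giving (i). Second, part (ii) is immediate from the construction together with the observation that $\beta$ and $\gamma$ have disjoint supports — $\beta(x) > 0$ forces $\mu(x) > \nu(x)$ while $\gamma(x) > 0$ forces $\nu(x) > \mu(x)$ — so on the failure event we automatically have $X \ne Y$, and conversely $X = Y$ can only occur on the success event; hence $\mathbb{P}(X = Y) = p = 1 - \dtv(\mu,\nu)$. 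Third, part (iii): conditionally on $X \ne Y$ we are precisely on the failure branch, where $X \sim \beta$ and $Y \sim \gamma$ were drawn independently, so $X$ and $Y$ are conditionally independent, and $\mathbb{P}(X = x \mid X \ne Y) = \beta(x) \propto \mu(x) - \min(\mu(x),\nu(x))$, as claimed.

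I do not expect a genuine obstacle here; the only thing requiring a little care is the bookkeeping of degenerate cases ($p = 0$ or $p = 1$) and the fact that the disjoint-support observation is what makes the event $\{X = Y\}$ coincide \emph{exactly} with the success branch (rather than merely being contained in it), which is what pins down the probability in (ii) to be exactly $p$ rather than something larger. It is worth stating this observation as a small intermediate claim before assembling the three parts.
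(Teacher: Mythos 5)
Your construction is exactly the standard maximal coupling argument that the paper delegates to Den Hollander's notes (the paper gives no proof of its own, only the citation, and explicitly remarks that part (iii) is to be read off the proof there rather than the statement); your proposal reproduces that construction correctly, including the key observation that the residual measures $\beta$ and $\gamma$ have disjoint supports, which pins down $\Prob(X=Y)$ exactly. Nothing is missing, and the bookkeeping of the degenerate cases $p\in\{0,1\}$ is handled appropriately.
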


\subsubsection{\emph{\textbf{Other preliminary results.}}} 
We start with a simple observation used several times throughout the proof, often without explicit mention.

\begin{lemma}[$\oplus$ is associative for measures]\label{lem:assoc}
Fix three probability distributions $\mu, \nu, \eta$ on the same family of graphs $\mathcal G$. Then, $(\mu\oplus \nu)\oplus \eta = \mu\oplus\nu\oplus \eta$.
\end{lemma}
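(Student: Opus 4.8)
The plan is to unwind the definition of $\oplus$ for measures on both sides and check that the resulting weight functions on $\mathcal{G}$ agree. Recall that for measures $\alpha, \beta$ on $\mathcal{G}$, the measure $\alpha \oplus \beta$ assigns to $G \in \mathcal{G}$ a weight proportional to $\sum \alpha(G_1)\beta(G_2)$, where the sum ranges over edge-disjoint pairs $(G_1, G_2)$ with $G_1 \cup G_2 = G$. Since the normalising constant is determined by requiring the total mass to be $1$, it suffices to show that the \emph{unnormalised} weight functions $W_{(\mu\oplus\nu)\oplus\eta}$ and $W_{\mu\oplus\nu\oplus\eta}$ are equal up to a global positive scalar; equality of the normalised measures then follows automatically.

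First I would write out $W_{(\mu\oplus\nu)\oplus\eta}(G)$: it is $\sum_{H, G_3}\, W_{\mu\oplus\nu}(H)\,\eta(G_3)$ over edge-disjoint pairs with $H \cup G_3 = G$, and then substitute $W_{\mu\oplus\nu}(H) = \sum_{G_1,G_2} \mu(G_1)\nu(G_2)$ over edge-disjoint pairs with $G_1 \cup G_2 = H$. Combining the two sums gives a sum of $\mu(G_1)\nu(G_2)\eta(G_3)$ over all triples $(G_1, G_2, G_3)$ such that $G_1, G_2$ are edge-disjoint, $G_1 \cup G_2 = H$, and $H, G_3$ are edge-disjoint with $H \cup G_3 = G$ — which is precisely the condition that $G_1, G_2, G_3$ are pairwise edge-disjoint and $G_1 \cup G_2 \cup G_3 = G$. (One should note that since we are working with simple graphs on the same vertex set, ``pairwise edge-disjoint with union $G$'' is an unambiguous symmetric condition, and the intermediate graph $H = G_1 \cup G_2$ is uniquely determined by the pair, so no overcounting occurs.) On the other hand, $W_{\mu\oplus\nu\oplus\eta}(G)$ — interpreting the triple $\oplus$ as the natural simultaneous operation, or equivalently as $\mu \oplus (\nu \oplus \eta)$ and performing the symmetric expansion — is the sum of $\mu(G_1)\nu(G_2)\eta(G_3)$ over exactly the same set of triples. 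Hence the two unnormalised weights coincide, and therefore so do the normalised measures.

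If instead the paper intends $\mu \oplus \nu \oplus \eta$ to be defined as $\mu \oplus (\nu \oplus \eta)$, then the statement is that $(\mu\oplus\nu)\oplus\eta = \mu\oplus(\nu\oplus\eta)$, and the argument is symmetric: expanding $\mu \oplus (\nu \oplus \eta)$ the same way yields the sum over pairwise edge-disjoint triples with union $G$, matching the expansion of $(\mu\oplus\nu)\oplus\eta$ above. Either way the core computation is identical.

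This is a routine verification and I do not anticipate a genuine obstacle. The one point requiring a little care is bookkeeping: making sure that when we merge the two nested sums the decomposition condition really does become ``pairwise edge-disjoint with union $G$'' with no double counting — that is, confirming the map $(G_1, G_2, G_3) \mapsto (G_1 \cup G_2, G_3)$ from the merged index set to the outer index set has fibres that are in bijection with the inner index set. This is immediate because $G_1 \cup G_2$ is determined by $(G_1, G_2)$, but it is the only place where one could slip. One should also remark briefly that all the sums involved are finite (as $\mathcal{G}$ is finite), so there are no convergence or reordering issues.
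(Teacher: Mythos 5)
Your proposal is correct and follows essentially the same route as the paper's proof: unwind the nested $\oplus$ definitions, observe that both sides are proportional to $\sum \mu(G_1)\nu(G_2)\eta(G_3)$ over pairwise edge-disjoint triples with union $G$, and conclude via the shared normalisation. Your extra remarks on the fibres of the map $(G_1,G_2,G_3)\mapsto(G_1\cup G_2, G_3)$ and on finiteness are fine but not needed beyond what the paper states.
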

\begin{proof}
For any $G\in \mathcal G$, note that $((\mu\oplus\nu)\oplus \eta)(G)$ is proportional to 
\[
    \sum_{\substack{G_1,G_3\in \mathcal G\\ G_1\cup G_3 = G\\ G_1\cap G_3 = \emptyset}}(\mu\oplus \nu)(G_1)\cdot \eta(G_3) = \sum_{\substack{G_1,G_2,G_3\in \mathcal G\\ G_1\cup G_2\cup G_3 = G\\ G_1,G_2, G_3 \text{ pairwise edge-disjoint}}} \mu(G_1)\nu(G_2)\eta(G_3)\,.
\]
Moreover, $(\mu\oplus \nu\oplus \eta)(G)$ is also proportional to the same quantity. Since each of $(\mu\oplus\nu)\oplus \eta$ and $\mu\oplus\nu\oplus\eta$ is a probability distribution, they must coincide.
\end{proof}
\noindent
An example for an application of \Cref{lem:assoc} is that for every $d_1,\ldots,d_k\ge 1$ which sum to $d$, we can write $\nu_d = \nu_{d_1} \oplus \dotsb \oplus \nu_{d_k}$.

The next proposition quantifies the contiguity relation between two measures and shows that, if two sequences are contiguous, then their total variation distance is bounded away from $1$.

\begin{proposition}\label{prop:cont-dtv}
Fix a sequence of finite or countable measurable spaces $(\Omega_n)_{n\ge 1}$ 
and contiguous probability distributions $\eta_n$ and $\tilde\eta_n$ on $\Omega_n$.
Denote $\eta = (\eta_n)_{n\ge 1}$ and $\tilde\eta = (\tilde\eta_n)_{n\ge 1}$.
Then, each of the following holds.
\begin{enumerate}
\setlength\itemsep{0.3em}
    \item[\emph{(i)}] For every $\delta > 0$, there exist $\gamma = \gamma(\delta, \eta,\tilde\eta)\in (0,\delta)$ and $n_0 = n_0(\delta, \eta, \tilde\eta) \ge 1$ such that, for every $n\ge n_0$ and every event $A_n\subseteq \Omega_n$, $\eta_n(A_n)\le \gamma$ implies $\tilde\eta_n(A_n)\le \delta$ and $\tilde\eta_n(A_n)\le \gamma$ implies $\eta_n(A_n)\le \delta$.
    \item[\emph{(ii)}] There exist an absolute constant $\eps > 0$ and $n_0\in \mathbb N$ such that $\dtv(\eta_n,\tilde{\eta}_n) \leq 1 - \eps$ for all $n\geq n_0$. 
\end{enumerate}
\end{proposition}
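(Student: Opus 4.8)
The plan is to prove both parts by contradiction, exploiting the defining property of contiguity: a sequence of events holds whp under $\eta_n$ iff it holds whp under $\tilde\eta_n$. The key observation is that contiguity is a statement purely about sequences of events, so to leverage it we must package potential counterexamples into a single sequence.

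For part (i), suppose the conclusion fails. Then for some $\delta > 0$, for every $\gamma \in (0,\delta)$ and every $n_0$, there is some $n \geq n_0$ and some event $A_n \subseteq \Omega_n$ with (say) $\eta_n(A_n) \leq \gamma$ but $\tilde\eta_n(A_n) > \delta$ (the other implication is symmetric; to handle both at once one can take $\gamma_k = 1/k$ and extract infinitely many bad indices $n$, then pass to a subsequence along which the same one of the two implications fails infinitely often). Taking $\gamma = \gamma_k = 1/k$ for $k = 1,2,\dots$, I would extract a strictly increasing sequence of indices $n_k$ and events $A_{n_k} \subseteq \Omega_{n_k}$ with $\eta_{n_k}(A_{n_k}) \leq 1/k \to 0$ and $\tilde\eta_{n_k}(A_{n_k}) > \delta$. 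Now define a sequence of events $(B_n)_{n \ge 1}$ by $B_n = \Omega_n \setminus A_{n_k}$ if $n = n_k$ for some $k$, and $B_n = \Omega_n$ otherwise. Then $\eta_n(B_n) \to 1$ as $n \to \infty$: indeed for $n = n_k$ we have $\eta_n(B_n) = 1 - \eta_{n_k}(A_{n_k}) \geq 1 - 1/k$, and for the other (cofinitely... actually infinitely many, but in any case) indices $\eta_n(B_n) = 1$. By contiguity, $\tilde\eta_n(B_n) \to 1$ as well. But along the subsequence $n = n_k$, $\tilde\eta_{n_k}(B_{n_k}) = 1 - \tilde\eta_{n_k}(A_{n_k}) < 1 - \delta$, contradicting $\tilde\eta_n(B_n) \to 1$. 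This is the main idea; the only mild annoyance is bookkeeping the two symmetric implications simultaneously, which is handled by a pigeonhole/subsequence argument.

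For part (ii), I would derive it as a consequence of part (i), which is why it is natural to prove them together. Fix $\delta = 1/2$ and let $\gamma = \gamma(1/2, \eta, \tilde\eta) \in (0, 1/2)$ and $n_0$ be as provided by part (i). Set $\eps = \gamma$. I claim $\dtv(\eta_n, \tilde\eta_n) \leq 1 - \eps$ for all $n \geq n_0$. Indeed, let $A_n$ be the set where $\eta_n - \tilde\eta_n \geq 0$ pointwise, so that $\dtv(\eta_n, \tilde\eta_n) = \eta_n(A_n) - \tilde\eta_n(A_n)$ by the standard characterisation of total variation via the Hahn decomposition (equivalently from \eqref{eq:dTV}). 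If $\eta_n(A_n) \leq \gamma$, then by part (i), $\tilde\eta_n(A_n) \leq 1/2$, and hence $\dtv(\eta_n, \tilde\eta_n) = \eta_n(A_n) - \tilde\eta_n(A_n) \leq \gamma \leq 1 - \eps$ provided $\gamma \le 1 - \eps$, which holds with equality. Otherwise $\eta_n(A_n) > \gamma$; but then, applying part (i) with the complementary inequality — i.e. using that $\eta_n(A_n) > \gamma$... hmm, here one should instead argue via the complement. Let me reorganise: if $\tilde\eta_n(A_n) \ge \gamma$, then $\dtv = \eta_n(A_n) - \tilde\eta_n(A_n) \le 1 - \gamma = 1 - \eps$ and we are done; if $\tilde\eta_n(A_n) < \gamma$, then part (i) (applied to $\tilde\eta_n(A_n) \le \gamma \Rightarrow \eta_n(A_n) \le 1/2$) gives $\eta_n(A_n) \le 1/2$, so $\dtv \le \eta_n(A_n) \le 1/2 \le 1 - \eps$ since $\eps \le 1/2$. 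Either way $\dtv(\eta_n, \tilde\eta_n) \le 1 - \eps$.

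The main obstacle is really just conceptual care rather than technical difficulty: one must remember that contiguity gives no uniform quantitative control a priori, so everything must be phrased so that the contiguity hypothesis is applied to a single well-chosen sequence of events, and the contradiction is extracted along a subsequence. A secondary point requiring a little attention is making sure the two symmetric implications in part (i) are both obtained from the same argument — this is a routine pigeonhole on which of the two fails infinitely often. Once part (i) is in hand, part (ii) is a short deduction using the Hahn-decomposition formula for $\dtv$.
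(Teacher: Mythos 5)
Your proposal is correct and takes essentially the same route as the paper: part (i) by contradiction via a bad subsequence packaged into a single sequence of events to which contiguity is applied, and part (ii) deduced from (i) by a case analysis on whether $\eta_n(A_n)$ and $\tilde\eta_n(A_n)$ are small or bounded below by $\gamma$. The only cosmetic difference is that in (ii) you fix the Hahn-decomposition maximiser while the paper bounds $|\eta_n(A_n)-\tilde\eta_n(A_n)|$ for an arbitrary $A_n$, and in (i) you handle the two symmetric implications by a pigeonhole on a single subsequence rather than treating them separately and taking a minimum of the two $\gamma$'s -- both are equivalent.
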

\begin{proof}
For (i), we show the first implication; the second one is derived similarly.
Suppose for contradiction that there is $\delta > 0$ and a sequence $1\le n_1 < n_2 < \dotsb$ such that, for every $i\ge 1$, there is an event $E_{n_i}$ such that $\eta_{n_i}(E_{n_i})\le 1/i$ but $\tilde\eta_{n_i}(E_{n_i}) > \delta$.

Define events $(A_n)_{n\in \mathbb N}$ where $A_{n} = \Omega_n$ for $n\notin \{n_1,n_2,\dots\}$ and, for every $i\ge 1$, $A_{n_i} = E_{n_i}^c$. Then, by construction, we have that $\lim_{n\rightarrow \infty}\eta_n(A_n) = 1$, which by contiguity implies $\lim_{n\rightarrow \infty}\tilde{\eta}_n(A_n) = 1$. However, this contradicts the fact that $\limsup_{i\rightarrow \infty}\tilde{\eta}_{n_i}(A_{n_i}) \le 1-\delta$.
Thus, there exists a constant $\gamma_0 > 0$ satisfying (i), and choosing $\gamma = \min(\delta, \gamma_0)$ finishes the proof of (i).

Now, we deduce (ii) from (i). Fix any $\delta \in (0,1)$ and $\gamma, n_0$ as in (i). 
Fix any sequence of events $(A_n)_{n\geq 1}$ and $n\geq n_0$.
We show that $|\eta_n(A_n) - \tilde{\eta}_n(A_n)|$ is bounded away from $1$.
If $\eta_n(A_n) \leq \gamma$, then $\tilde{\eta}_n(A_n)\leq \delta$ and $|\eta_n(A_n) - \tilde{\eta}_n(A_n)| \leq \delta$, which is bounded away from 1. 
Note that a similar inequality holds when $\tilde\eta_n(A_n) \leq \gamma$.
Finally, if $\min(\eta_n(A_n),\tilde\eta_n(A_n))\ge \gamma$, then $|\eta_n(A_n) - \tilde{\eta}_n(A_n)| \leq 1-\gamma$, which is again bounded away from 1, completing the proof. 
\end{proof}

The following lemma shows that contiguity has a convenient fractional reformulation.
\begin{lemma}
\label{lem:GeneralisationContiguity}
Fix a sequence of finite or countable measurable spaces $(\Omega_n)_{n\ge 1}$ 
and contiguous probability measures $\eta_n$ and $\tilde\eta_n$ on $(\Omega_n)_{n\geq 1}$.
Fix a family $\{a_{x,n}:n\ge 1,x\in \Omega_n\}$ of uniformly bounded non-negative real numbers.
Then, 
\[\sum_{x\in \Omega_n} a_{x,n}\, \eta_n(x)\xrightarrow[n\to \infty]{} 0\iff \sum_{x\in \Omega_n} a_{x,n}\, \tilde\eta_n(x)\xrightarrow[n\to \infty]{} 0\,.\]
\end{lemma}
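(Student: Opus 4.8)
The plan is to reduce the fractional statement to the qualitative definition of contiguity by a truncation/approximation argument. Assume, without loss of generality, $a_{x,n} \in [0,1]$ for all $n$ and $x \in \Omega_n$ after rescaling by the uniform bound. Suppose $\sum_{x\in\Omega_n} a_{x,n}\,\eta_n(x) \to 0$; we want the same for $\tilde\eta_n$, and the reverse implication is symmetric. Fix $\delta \in (0,1)$. The key observation is that, since $S_n := \sum_x a_{x,n}\eta_n(x) \to 0$, Markov's inequality applied to the $\eta_n$-random variable $x \mapsto a_{x,n}$ gives $\eta_n(B_n) \le S_n/\delta \to 0$, where $B_n := \{x \in \Omega_n : a_{x,n} \ge \delta\}$. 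Hence $\eta_n(B_n^c) \to 1$, so by contiguity $\tilde\eta_n(B_n^c) \to 1$, i.e.\ $\tilde\eta_n(B_n) \to 0$.

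Now I would split the sum for $\tilde\eta_n$ over $B_n$ and its complement:
\[
\sum_{x\in \Omega_n} a_{x,n}\,\tilde\eta_n(x) = \sum_{x\in B_n} a_{x,n}\,\tilde\eta_n(x) + \sum_{x\in B_n^c} a_{x,n}\,\tilde\eta_n(x) \le \tilde\eta_n(B_n) + \delta,
\]
using $a_{x,n}\le 1$ on the first sum and $a_{x,n} < \delta$ on the second. Since $\tilde\eta_n(B_n) \to 0$, we get $\limsup_{n\to\infty} \sum_{x\in\Omega_n} a_{x,n}\,\tilde\eta_n(x) \le \delta$. As $\delta \in (0,1)$ was arbitrary, the limit is $0$, as required. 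The reverse implication is obtained by exchanging the roles of $\eta_n$ and $\tilde\eta_n$, which is legitimate since contiguity is symmetric.

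There is essentially no hard step here: the argument is just Markov's inequality plus one application of the definition of contiguity, with the truncation parameter $\delta$ handling the trade-off between the two pieces of the sum. The only point requiring minor care is the normalisation to get $a_{x,n}\in[0,1]$ (so that $a_{x,n}\le 1$ can be used to bound the first sum by $\tilde\eta_n(B_n)$), and noting that rescaling all $a_{x,n}$ by a common constant affects neither hypothesis nor conclusion. I would also remark that this is where the uniform boundedness assumption is genuinely used: without it, the tail sum $\sum_{x\in B_n^c} a_{x,n}\tilde\eta_n(x)$ need not be controlled by $\delta$, and indeed the statement can fail.
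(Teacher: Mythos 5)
Your proof is correct and is essentially the paper's proof: you threshold $a_{x,n}$ at a level $\delta$ (the paper uses $\eps$ and calls the high set $L_n$, your $B_n$), observe via Markov's inequality that $\eta_n$ puts vanishing mass on the high set, transfer this to $\tilde\eta_n$ by contiguity, and then bound the sum by $\tilde\eta_n(B_n)+\delta$ and let $\delta\to 0$. The only cosmetic difference is that you normalise $a_{x,n}\le 1$ up front, whereas the paper carries the constant $\max\{a_{x,n}\}$ through the estimate.
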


Note that one retrieves the classic definition of contiguity when $a_{x,n} \in \{0,1\}$ for every $n \ge 1$ and $x \in \Omega_n$.

\begin{proof}
We show that convergence on the left-hand side implies convergence on the right-hand side; the inverse implication follows by symmetry.
Suppose that $\sum_{x\in \Omega_n} a_{x,n} \eta_n(x)\to 0$.
Fix $\eps >0$ and, for every $n \ge 1$, define the sets $S_n = \{x\in \Omega_n : a_{x,n} \leq \eps \}$ and $L_n =\{x\in \Omega_n : a_{x,n} > \eps \}$.
Then,
\begin{align*}
\sum_{x\in \Omega_n} a_{x,n}\, \eta_n(x) \ge \sum_{x\in L_n} a_{x,n}\, \eta_n(x) \ge \eps\, \eta_n(L_n).
\end{align*}
Hence, $\eta_n(L_n)\to 0$ and, by contiguity of the measures $\eta_n$ and $\tilde\eta_n$, we have $\tilde\eta_n(L_n)\to 0$.
As a result, for every $\eps > 0$, there is $n_0 \ge 1$ such that, for every $n \geq n_0$,
\begin{align*}
\sum_{x\in \Omega_n} a_{x,n}\, \tilde\eta_n(x) = \sum_{x\in L_n} a_{x,n}\, \tilde\eta_n(x) + \sum_{x\in S_n} a_{x,n}\, \tilde\eta_n(x) \le \max\{a_{x,n}:n\ge 1,x\in \Omega_n\}\, \tilde\eta_n(L_n) + \eps \tilde\eta_n(S_n)\le 2\eps.
\end{align*}
As the latter holds for every $\eps > 0$, this completes the proof.
\end{proof}

The next proposition relates the total variation distance and contiguity of measures with the operation~$\oplus$.
\begin{proposition}\label{prop:general}
For every integer $n\ge 1$, fix probability distributions $\eta_n$, $\tilde\eta_n$ and $\tau_n$ 
on the family $\cF_n$ of labelled $n$-vertex graphs.
For every graph $G\in \cF_n$, define
\begin{equation}\label{eq:beta}
\beta_n(G) = \sum_{H\in \cF_n:\, G\cap H=\emptyset} \tau_n(H).
\end{equation}
Suppose that, for some constant $\gamma > 0$ and every $G,G'\in \cF_n$, we have $\beta_n(G)\le \gamma\beta_n(G')$. Then, each of the following holds. 
\begin{enumerate}
\setlength\itemsep{0.3em}
    \item[\emph{(a)}] $\dtv(\eta_n\oplus\tau_n,\tilde\eta_n\oplus\tau_n)\le 2\gamma\dtv(\eta_n,\tilde\eta_n)$.
    \item[\emph{(b)}] Suppose that the sequences of measures $(\eta_n)_{n\ge 1}$ and $(\tilde\eta_n)_{n\ge 1}$ are contiguous. Then, the sequences $(\eta_n\oplus\tau_n)_{n\ge 1}$ and $(\tilde\eta_n\oplus\tau_n)_{n\ge 1}$ are contiguous as well.
\end{enumerate}
\end{proposition}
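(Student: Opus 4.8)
The plan is to prove part (a) by an explicit computation of the total variation distance using the defining formula \eqref{eq:dTV}, and then derive part (b) as a consequence via \Cref{lem:GeneralisationContiguity}. For part (a), first I would write down, for a fixed graph $G\in\cF_n$, the unnormalised weight $w(\eta_n\oplus\tau_n)(G) = \sum_{G_1\cap G_2=\emptyset,\, G_1\cup G_2=G}\eta_n(G_1)\tau_n(G_2)$, and note that the normalising constant is $Z_\eta = \sum_{G_1}\sum_{G_2:\,G_1\cap G_2=\emptyset}\eta_n(G_1)\tau_n(G_2) = \sum_{G_1}\eta_n(G_1)\beta_n(G_1)$, and similarly $Z_{\tilde\eta} = \sum_{G_1}\tilde\eta_n(G_1)\beta_n(G_1)$. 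The hypothesis $\beta_n(G)\le\gamma\beta_n(G')$ for all $G,G'$ says precisely that $\beta_n$ is bounded above and below by positive multiples of any fixed value of $\beta_n$; in particular if we set $\beta_{\min}=\inf_G\beta_n(G)$ and $\beta_{\max}=\sup_G\beta_n(G)$, then $\beta_{\max}\le\gamma\beta_{\min}$, so $Z_\eta,Z_{\tilde\eta}\in[\beta_{\min},\beta_{\max}]$ and their ratio lies in $[\gamma^{-1},\gamma]$.

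The main step is then the bound
\[
2\,\dtv(\eta_n\oplus\tau_n,\tilde\eta_n\oplus\tau_n)=\sum_{G\in\cF_n}\left|\frac{w_\eta(G)}{Z_\eta}-\frac{w_{\tilde\eta}(G)}{Z_{\tilde\eta}}\right|,
\]
where $w_\eta(G)=\sum_{G_1\cup G_2=G,\,G_1\cap G_2=\emptyset}\eta_n(G_1)\tau_n(G_2)$ and likewise for $w_{\tilde\eta}$. I would use the triangle inequality on a common-denominator rewrite, $\frac{w_\eta}{Z_\eta}-\frac{w_{\tilde\eta}}{Z_{\tilde\eta}}=\frac{w_\eta-w_{\tilde\eta}}{Z_\eta}+w_{\tilde\eta}\bigl(\frac1{Z_\eta}-\frac1{Z_{\tilde\eta}}\bigr)$, so the sum is at most $\frac1{Z_\eta}\sum_G|w_\eta(G)-w_{\tilde\eta}(G)| + \bigl|\frac1{Z_\eta}-\frac1{Z_{\tilde\eta}}\bigr|\sum_G w_{\tilde\eta}(G)$. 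For the first term, since each $G$ decomposes uniquely as $G_1\sqcup G_2$ when we sum over the decomposition, $\sum_G|w_\eta(G)-w_{\tilde\eta}(G)|\le\sum_{G_1,G_2:\,G_1\cap G_2=\emptyset}|\eta_n(G_1)-\tilde\eta_n(G_1)|\tau_n(G_2)=\sum_{G_1}|\eta_n(G_1)-\tilde\eta_n(G_1)|\beta_n(G_1)\le\beta_{\max}\cdot 2\dtv(\eta_n,\tilde\eta_n)$, and dividing by $Z_\eta\ge\beta_{\min}$ gives at most $2\gamma\dtv(\eta_n,\tilde\eta_n)$. The second term is $\sum_G w_{\tilde\eta}(G)=Z_{\tilde\eta}$, times $|Z_\eta-Z_{\tilde\eta}|/(Z_\eta Z_{\tilde\eta})=|Z_\eta-Z_{\tilde\eta}|/Z_\eta$, and $|Z_\eta-Z_{\tilde\eta}|=\bigl|\sum_{G_1}(\eta_n(G_1)-\tilde\eta_n(G_1))\beta_n(G_1)\bigr|$; writing $\beta_n(G_1)=\beta_{\min}+(\beta_n(G_1)-\beta_{\min})$ and using that $\sum_{G_1}(\eta_n-\tilde\eta_n)(G_1)=0$ kills the constant part, leaving $\le(\beta_{\max}-\beta_{\min})\cdot 2\dtv(\eta_n,\tilde\eta_n)\le(\gamma-1)\beta_{\min}\cdot 2\dtv(\eta_n,\tilde\eta_n)$, so this term is at most $2(\gamma-1)\dtv(\eta_n,\tilde\eta_n)$. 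Combining gives $2\dtv(\eta_n\oplus\tau_n,\tilde\eta_n\oplus\tau_n)\le 2\gamma\dtv+2(\gamma-1)\dtv$; a slightly more careful bookkeeping (or simply absorbing, since the statement allows the clean constant $2\gamma$) yields the claimed $\dtv(\eta_n\oplus\tau_n,\tilde\eta_n\oplus\tau_n)\le 2\gamma\dtv(\eta_n,\tilde\eta_n)$ — I would redo the split more sharply, e.g.\ by not passing through $w_\eta-w_{\tilde\eta}$ and the $Z$-difference separately but estimating $|w_\eta/Z_\eta - w_{\tilde\eta}/Z_{\tilde\eta}|$ in one go against $\beta_{\max}/\beta_{\min}\le\gamma$, to land exactly on $2\gamma$.

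For part (b), I would use the fractional characterisation of contiguity from \Cref{lem:GeneralisationContiguity}: set, for $G\in\cF_n$, $a_{G,n}=\beta_n(G)/\beta_n(G_0)$ for an arbitrary reference graph $G_0$ (these are uniformly bounded in $[\gamma^{-1},\gamma]$ by hypothesis — after possibly also invoking the lower bound $\beta_n(G)\ge\gamma^{-1}\beta_n(G_0)$), and observe that $(\eta_n\oplus\tau_n)(A_n)\to 1$ iff $(\eta_n\oplus\tau_n)(A_n^c)\to 0$, and $(\eta_n\oplus\tau_n)(A_n^c)=Z_\eta^{-1}\sum_{G\in A_n^c}w_\eta(G)=Z_\eta^{-1}\sum_{G_1}\eta_n(G_1)\sum_{G_2:\,G_1\cap G_2=\emptyset,\,G_1\cup G_2\in A_n^c}\tau_n(G_2)$. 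Writing $b_{G_1,n}=\sum_{G_2:\,G_1\cap G_2=\emptyset,\,G_1\cup G_2\in A_n^c}\tau_n(G_2)\in[0,1]$, this equals $Z_\eta^{-1}\sum_{G_1}b_{G_1,n}\eta_n(G_1)$, and $Z_\eta=\sum_{G_1}\beta_n(G_1)\eta_n(G_1)=\Theta(1)\cdot\beta_n(G_0)$, so $(\eta_n\oplus\tau_n)(A_n^c)\to 0$ iff $\sum_{G_1}b_{G_1,n}\,\tilde a_{G_1,n}\,\eta_n(G_1)\to 0$ where $\tilde a_{G_1,n}=1$ (the $Z_\eta$ denominator, being $\Theta(\beta_n(G_0))$ and the same order as the quantity it divides once we factor, is handled by homogeneity) — more cleanly: $(\eta_n\oplus\tau_n)(A_n^c)\to 0$ iff $\sum_{G_1}b_{G_1,n}\eta_n(G_1)\to 0$, since the ratio of this sum to $(\eta_n\oplus\tau_n)(A_n^c)$ is $Z_\eta=\Theta(\beta_n(G_0))$ — wait, that is not bounded; instead normalise: $\sum_{G_1}(b_{G_1,n}/\beta_n(G_0))\eta_n(G_1)\to 0$ iff $(\eta_n\oplus\tau_n)(A_n^c)\to0$ up to the bounded factor $\beta_n(G_0)/Z_\eta\in[\gamma^{-1},\gamma]$, and now $b_{G_1,n}/\beta_n(G_0)\in[0,\gamma]$ is uniformly bounded, so \Cref{lem:GeneralisationContiguity} applies with these coefficients and gives $\sum_{G_1}(b_{G_1,n}/\beta_n(G_0))\eta_n(G_1)\to0$ iff the same with $\tilde\eta_n$, which unwinds to $(\eta_n\oplus\tau_n)(A_n^c)\to0$ iff $(\tilde\eta_n\oplus\tau_n)(A_n^c)\to0$, i.e.\ the desired contiguity. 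Alternatively and perhaps more simply, part (b) follows formally from part (a) combined with \Cref{prop:cont-dtv}(ii): the latter gives $\dtv(\eta_n,\tilde\eta_n)\le 1-\eps$ is \emph{not} enough on its own, so the \Cref{lem:GeneralisationContiguity} route is the right one, and I would write it that way. The main obstacle is purely bookkeeping: keeping track of the normalising constants $Z_\eta,Z_{\tilde\eta}$ and getting the constant in (a) to be exactly $2\gamma$ rather than something slightly worse; the hypothesis $\beta_n(G)\le\gamma\beta_n(G')$ is exactly what controls both the ratio $Z_\eta/Z_{\tilde\eta}$ and the weighting in the sum, so no genuinely new idea is needed beyond careful estimation.
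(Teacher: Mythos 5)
Your proof is correct and follows essentially the same route as the paper: the common-denominator split for part (a) with the bounds $Z_\eta,Z_{\tilde\eta}\in[\beta_{\min},\beta_{\max}]$ and $\beta_{\max}\le\gamma\beta_{\min}$, and an application of \Cref{lem:GeneralisationContiguity} with suitably normalised coefficients for part (b). One small clarification: your worry about ``landing exactly on $2\gamma$'' is unfounded — your two terms give $2\dtv(\eta_n\oplus\tau_n,\tilde\eta_n\oplus\tau_n)\le 2\gamma\dtv(\eta_n,\tilde\eta_n)+2(\gamma-1)\dtv(\eta_n,\tilde\eta_n)$, which after dividing by $2$ is $(2\gamma-1)\dtv(\eta_n,\tilde\eta_n)\le 2\gamma\dtv(\eta_n,\tilde\eta_n)$ since the hypothesis with $G=G'$ forces $\gamma\ge1$; in fact your centering of $\beta_n$ at $\beta_{\min}$ in the $|Z_\eta-Z_{\tilde\eta}|$ estimate gives a marginally sharper constant than the paper's argument, which bounds that difference by $2\dtv\cdot\beta_{\max}$ without centering.
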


Note that, in \Cref{prop:general}, $\cF_n$ is not necessarily the support of $\eta_n$, $\tilde\eta_n$ and $\tau_n$, meaning that some graphs $G\in \cF_n$ can be attributed weight 0 by any of these measures.

\begin{proof}[Proof of \Cref{prop:general}]
To begin with, we focus on the proof of part (a). Denote for brevity $\lambda_n = \eta_n \oplus \tau_n$ and $\tilde\lambda_n= \tilde\eta_n \oplus \tau_n$.
Moreover, for every $S\in \cF_n$, define 
\begin{align*}
\psi_n(S) \coloneqq \sum_{\substack{G,H\in \cF_n\\ G\cup H=S\\ G\cap H = \emptyset}} \eta_n(G) \tau_n(H)\qquad \text{and}\qquad \Psi_n \coloneqq \sum_{S\in \cF_n} \psi_n(S),
\end{align*}
and denote by $\tilde \psi_n(S)$ and $\tilde \Psi_n$ the analogous quantities obtained by replacing $\eta_n$ with $\tilde\eta_n$.
Then, we have
\begin{align*}
\dtv(\lambda_n,\tilde\lambda_n) = \frac{1}{2} \sum_{S\in \cF_n} |\lambda_n(S) -\tilde\lambda_n(S)| = \frac{1}{2} \sum_{S\in \cF_n} \left|\frac{\psi_n(S)}{\Psi_n} - \frac{\tilde\psi_n(S)}{\tilde\Psi_n} \right|.
\end{align*}
By summing over $G$ first, note that $\Psi_n = \sum_{G\in \cF_n} \eta_n(G) \beta_n(G)$, and similar equality holds for 
$\tilde\Psi_n$ and $\tilde\eta_n$.
In particular, by the triangle inequality,
\begin{equation}\label{eq:triangle}
|\Psi_n - \tilde\Psi_n|
\le \sum_{G\in \cF_n} |\eta_n(G)-\tilde\eta_n(G)| \beta_n(G) \le 2\dtv(\eta_n,\tilde\eta_n)\max_{G\in \cF_n}\beta_n(G).
\end{equation}
By using \eqref{eq:triangle} and the triangle inequality again, we obtain that 
\begin{align*}
\dtv(\lambda_n,\tilde\lambda_n) 
&\le \frac{1}{2} \sum_{S\in \cF_n} \left|\frac{\psi_n(S)}{\Psi_n} - \frac{\tilde\psi_n(S)}{\Psi_n} \right| + \frac{1}{2} \sum_{S\in \cF_n} \left|\frac{\tilde\psi_n(S)}{\Psi_n} - \frac{\tilde\psi_n(S)}{\tilde\Psi_n} \right|\\
&\leq \frac{\sum_{G,H\in \cF_n:\, G\cap H = \emptyset} |\eta_n(G)-\tilde\eta_n(G)| \tau_n(H)}{2\Psi_n} + \sum_{S\in \cF_n} \frac{\tilde\psi_n(S)}{2}\frac{|\Psi_n-\tilde\Psi_n|}{\Psi_n\tilde\Psi_n}\\
&= \frac{\sum_{G\in \cF_n} |\eta_n(G)-\tilde\eta_n(G)| \beta_n(G)}{2\Psi_n} + \frac{|\Psi_n-\tilde\Psi_n|}{2\Psi_n}\le \frac{2\dtv(\eta_n,\tilde\eta_n)\max_{G\in \cF_n}\beta_n(G)}{\Psi_n}.
\end{align*}
Finally, by using that $\Psi_n = \sum_{G\in \cF_n} \eta_n(G) \beta_n(G)\ge \min_{G\in \cF_n} \beta_n(G)$ and the assumption on $\beta_n$ in the statement, we obtain that
\[\dtv(\lambda_n,\tilde\lambda_n)\le \frac{2\dtv(\eta_n,\tilde\eta_n)\max_{G\in \cF_n}\beta_n(G)}{\min_{G\in \cF_n} \beta_n(G)}\le 2\gamma \dtv(\eta_n, \tilde\eta_n),\]
which finishes the proof of part (a).

We turn to the proof of part (b). For every $n\ge 1$, fix $A_n\subseteq \cF_n$. 
Then,
\begin{align*}
\lambda_n(A_n) = \frac{\sum_{S\in A_n} \psi_n(S)}{\Psi_n} \qquad\text{and}\qquad\tilde\lambda_n(A_n) = \frac{\sum_{S\in A_n} \tilde\psi_n(S)}{\tilde\Psi_n}.
\end{align*}
Suppose that $\lambda_n(A_n)\to 0$.
For every $n\ge 1$ and $G\in \cF_n$, denote
\[a_{G,n}\coloneqq \frac{\sum_{H\in \cF_n:\, G\cup H\in A_n,\, G\cap H = \emptyset} \tau_n(H)}{\max_{G\in \cF_n} \beta_n(G)}\in [0,1].\]
Using that $\Psi_n\le \max_{G\in \cF_n} \beta_n(G)$, we obtain that
\begin{align*}
\lambda_n(A_n) = \frac{\sum_{S\in A_n} \psi_n(S)}{\Psi_n} \ge \sum_{G\in \cF_n} \frac{\sum_{H\in \cF_n:\, G\cup H\in A_n,\, G\cap H = \emptyset} \tau_n(H)}{\max_{G\in \cF_n} \beta_n(G)} \eta_n(G) = \sum_{G\in \cF_n} a_{G,n}\eta_n(G).
\end{align*}
Since $\lambda_n(A_n)\to 0$, the right-hand side of the above inequality does as well. 
Thus, by \Cref{lem:GeneralisationContiguity} and the contiguity of $\eta_n$ and $\tilde\eta_n$, it follows that $\sum_{G\in \cF_n} a_{G,n}\tilde\eta_n(G)\to 0$.
However, by assumption on $\beta_n$ and the inequality $\min_{G\in \cF_n} \beta_n(G)\le \tilde\Psi_n$,
\begin{align*}
\tilde\lambda_n(A_n) = \frac{\sum_{S\in A_n} \tilde\psi_n(S)}{\tilde\Psi_n} \le \sum_{G\in \cF_n} \frac{\sum_{H\in \cF_n:\, G\cup H\in A_n,\, G\cap H = \emptyset} \tau_n(H)}{\min_{G\in \cF_n} \beta(G)} \tilde\eta_n(G) \le \gamma\sum_{G\in \cF_n} a_{G,n}\tilde\eta_n(G),
\end{align*}
implying that $\tilde\lambda_n(A_n)\to 0$.
Since the implication $\tilde\lambda_n(A_n)\to 0\Rightarrow \lambda_n(A_n)\to 0$ is derived similarly, the proof of part (b) follows.
\end{proof}

A combination of \Cref{thm:McKay} and \Cref{prop:general} implies the following useful corollary.
Recall that $\mu_d = \mu_{d,n}$ stands for the uniform distribution on the space $\cG_d(n)$.

\begin{corollary}\label{cor:cont_Gnd}
Consider integers $k = k(n)\ge 2$ and $d_1 = d_1(n),\ldots,d_k = d_k(n)\ge 1$ of sum $D=D(n)$ such that $D^3 \leq n$. 
Fix two sequences of measures $(\eta_n)_{n\ge 1}$ and $(\tilde\eta_n)_{n\ge 1}$ where $\eta_n$ and $\tilde\eta_n$ are defined on the space $\cG_{d_1}(n)$, and set $\tau_n \coloneqq \mu_{d_2,n}\oplus\dotsb\oplus\mu_{d_k,n}$.
Then, each of the following holds.
\begin{enumerate}
    \item[\emph{(a)}] There exists an absolute constant $C>0$ such that, for every $n\ge 1$,
    \[\dtv(\eta_n \oplus \tau_n, \tilde\eta_n \oplus \tau_n) \leq C \cdot \dtv(\eta_n, \tilde\eta_n).\]
    \item[\emph{(b)}] Suppose $(\eta_n)_{n\ge 1}$ and $(\tilde\eta_n)_{n\ge 1}$ are contiguous. Then, $(\eta_n \oplus \tau_n)_{n\ge 1}$ and $(\tilde\eta_n \oplus \tau_n)_{n\ge 1}$ are also contiguous.
\end{enumerate}
\end{corollary}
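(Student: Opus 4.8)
The plan is to deduce \Cref{cor:cont_Gnd} from \Cref{prop:general} by checking that the choice $\tau_n = \mu_{d_2,n}\oplus\dotsb\oplus\mu_{d_k,n}$ satisfies the hypothesis that $\beta_n(G)\le \gamma\beta_n(G')$ for all $G,G'\in \cF_n$, with $\gamma$ an absolute constant; both (a) and (b) then follow immediately from parts (a) and (b) of \Cref{prop:general} (taking $\cF_n$ to be all labelled $n$-vertex graphs, with $\eta_n,\tilde\eta_n$ extended by zero off $\cG_{d_1}(n)$). So the entire content of the corollary is the uniform two-sided bound on $\beta_n$.

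To bound $\beta_n(G)$ for a given $G\in \cG_{d_1}(n)$, I first observe that by \Cref{lem:assoc} the measure $\tau_n$ is supported on $\cG_{D-d_1}(n)$ and, for $H\in\cG_{D-d_1}(n)$, $\tau_n(H)$ is proportional to the number of ways to write $H$ as an ordered edge-disjoint union of a $d_2$-regular, \ldots, $d_k$-regular graph; crucially this ``decomposition count'' $c(H)$ depends only on the isomorphism type of $H$, in fact it is the same for every labelled $H\in\cG_{D-d_1}(n)$ up to the action of $\aut$, but more to the point $\beta_n(G) = Z_n^{-1}\sum_{H\in\cG_{D-d_1}(n):\,G\cap H=\emptyset} c(H)$ where $Z_n=\sum_{H\in\cG_{D-d_1}(n)}c(H)$ is independent of $G$. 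The point is that the quantity $\beta_n(G)$ is, up to the $G$-independent normalisation $Z_n$, a \emph{weighted} count of $(D-d_1)$-regular graphs edge-disjoint from $G$, and I want to show this count varies by only a constant factor as $G$ ranges over $d_1$-regular graphs. Since $c(\cdot)$ is invariant under relabelling, it suffices to compare $\sum_{H:\,G\cap H=\emptyset}c(H)$ for two fixed $d_1$-regular graphs $G,G'$; because $c$ is relabelling-invariant, this is exactly comparing $\EE[c(H)\mathbf 1\{G\cap H=\emptyset\}]$ for $H\sim\mu_{D-d_1,n}$, i.e. comparing $\Prob(G\cap H=\emptyset)$ weighted by $c$.

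The cleanest route is to avoid the weights entirely: it is enough to show that for a \emph{uniformly random} $(D-d_1)$-regular graph $H$, the probability $p(G):=\Prob(G\cap H=\emptyset)$ lies in $[c_1 q, c_2 q]$ for absolute constants $c_1,c_2>0$ and some quantity $q=q(n)$ not depending on $G$ (and then the same two-sided bound transfers verbatim to the $c$-weighted average, since $c\ge 0$, giving $\beta_n(G)/\beta_n(G')\le c_2/c_1$). Now $p(G)$ is precisely the number of $(D-d_1)$-regular graphs edge-disjoint from $G$ divided by $\lvert\cG_{D-d_1}(n)\rvert$, and the numerator is exactly what \Cref{thm:McKay} counts, with $g_i\equiv D-d_1$ and $x_i\equiv d_1$ (so $X=G$). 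One checks the hypothesis $\hat\Delta\le\eps\sum g_i$: here $e(G)=n(D-d_1)/2$, $\max g_i=D-d_1\le D$, $\max x_i=d_1\le D$, so $\hat\Delta = 2+(D-d_1)(\tfrac32(D-d_1)+d_1+1) = O(D^2)$, while $\sum_i g_i = n(D-d_1)\ge n$ (using $d_1\le D-1$, and $D\ge2$); since $D^3\le n$ gives $D^2\le n/D\le n/2$, indeed $\hat\Delta = O(D^2) = o(n)\le \eps\sum g_i$ for large $n$ with room to spare (small $n$ are handled by adjusting the absolute constant). \Cref{thm:McKay} then gives the count as $\frac{(n(D-d_1))!}{(n(D-d_1)/2)!\,2^{n(D-d_1)/2}\,((D-d_1)!)^n}\exp(-\lambda-\lambda^2-\mu-O(\hat\Delta^2/e(G)))$; the leading (factorial) factor is \emph{independent of $G$}, and I must show the exponential correction is $\Theta(1)$ uniformly in $G$. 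We have $\lambda = \frac{(D-d_1)(D-d_1-1)}{4}\cdot\frac{2}{n}\cdot\frac{1}{?}$… concretely $\lambda = \tfrac14\cdot\frac{n(D-d_1)(D-d_1-1)}{\,n(D-d_1)/2\,}\cdot\tfrac12 = \tfrac{(D-d_1-1)}{4}$ — wait, recompute: $\lambda=\frac{1}{4e(G)}\sum g_i(g_i-1) = \frac{(D-d_1)(D-d_1-1)n}{4\cdot n(D-d_1)/2} = \frac{D-d_1-1}{2}$, which is $G$-independent; $\mu = \frac{1}{2e(G)}\sum_{ij\in E(X)}g_ig_j = \frac{(D-d_1)^2 e(G')}{n(D-d_1)}$ where $e(G')=e(G)=nd_1/2$, so $\mu = \frac{(D-d_1)d_1}{2}$, again $G$-independent; and $O(\hat\Delta^2/e(G)) = O(D^4/(n(D-d_1))) = O(D^3/n) = O(1)$. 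Hence $\exp(-\lambda-\lambda^2-\mu-O(\hat\Delta^2/e(G)))$ equals a $G$-independent quantity times $e^{O(1)} = \Theta(1)$ uniformly, so $p(G) = q\cdot e^{O(1)}$ with $q$ independent of $G$, as required.

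The main obstacle is the last bookkeeping step: verifying that all of $\lambda$, $\lambda^2$, $\mu$ are genuinely $G$-independent (they are, because both $G$ and the target degree sequence are regular) and that the only $G$-dependent term, the $O(\hat\Delta^2/e(G))$ error, is bounded by an absolute constant — this is exactly where the hypothesis $D^3\le n$ is used, via $\hat\Delta^2/e(G) = O(D^4/(nD)) = O(D^3/n)\le O(1)$. One small technical point to be careful about: \Cref{thm:McKay} requires $(D-d_1)n$ even for the target degree sequence to be realisable — if it is odd then there are no such graphs and $\mu_{d_2,n}\oplus\dotsb\oplus\mu_{d_k,n}$ itself is ill-defined, so we may implicitly assume parity works out (it does whenever each $d_jn$ is even, the standing assumption for the models to make sense), and for the finitely many small $n$ where the asymptotic hypotheses of \Cref{thm:McKay} fail, the bound $\beta_n(G)\le\gamma\beta_n(G')$ holds trivially for a suitably large absolute $\gamma$ since $\cF_n$ is finite and all relevant quantities are positive. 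With the uniform $\beta_n$-bound in hand, parts (a) and (b) are immediate applications of \Cref{prop:general}(a) and (b) respectively, with $C = 2\gamma$.
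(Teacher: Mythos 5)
There is a genuine gap at the step where you ``avoid the weights entirely''. You correctly reduce to bounding $\beta_n(G) = Z_n^{-1}\sum_{H\in\cG_{D-d_1}(n):\,G\cap H=\emptyset}c(H)$, where $c(H)$ is the number of ordered decompositions of $H$ into edge-disjoint $d_2,\ldots,d_k$-regular graphs, and you note $Z_n$ is $G$-independent. But then you claim that it suffices to show the \emph{unweighted} probability $p(G)=\Prob_{H\sim\mu_{D-d_1,n}}(G\cap H=\emptyset)$ is uniform in $G$ up to a constant factor, on the grounds that ``the same two-sided bound transfers verbatim to the $c$-weighted average, since $c\ge0$''. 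This transfer is false: knowing $|\{H:G\cap H=\emptyset\}|$ and $|\{H:G'\cap H=\emptyset\}|$ are comparable tells you nothing about the ratio of $\sum_{H:G\cap H=\emptyset}c(H)$ to $\sum_{H:G'\cap H=\emptyset}c(H)$, because the two sets of $H$'s are different and the weights $c(H)$ are not constant over $\cG_{D-d_1}(n)$ (different $(D-d_1)$-regular graphs genuinely admit wildly different numbers of such decompositions — some may admit none). Your earlier aside that $c(H)$ ``is the same for every labelled $H\in\cG_{D-d_1}(n)$'' is not correct either: relabelling-invariance means $c$ depends only on the isomorphism class, but different isomorphism classes give different values. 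So a single application of \Cref{thm:McKay} with target degrees $g_i\equiv D-d_1$, as you do, controls the wrong quantity.

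The paper sidesteps this entirely by never factoring through $c(H)$: it writes $\beta_n(G)$ as $|\Lambda(G)|/|\Lambda|$ where $\Lambda(G)$ is the set of $(k-1)$-tuples $(H_2,\ldots,H_k)\in\cG_{d_2}\times\cdots\times\cG_{d_k}$ with $G,H_2,\ldots,H_k$ pairwise edge-disjoint (note $|\Lambda(G)|=\sum_{H:G\cap H=\emptyset}c(H)$, exactly your weighted sum), and computes $|\Lambda(G)|$ by applying \Cref{thm:McKay} \emph{consecutively $k-1$ times}, at step $i$ taking $X_{i-1}=G\cup H_2\cup\cdots\cup H_{i-1}$ and $g_j\equiv d_i$. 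Each step has a $G$-independent leading term and multiplicative error $\exp(O(d_iD_i^2/n))$, and the product of errors is $\exp(O(D^3/n))=e^{O(1)}$. This directly yields $\beta_n(G)=(1+O(D^3/n))\beta_n(G')$. Your bookkeeping of $\lambda,\mu$ and the $D^3\le n$ condition is correct as far as it goes, and the overall strategy (deduce from \Cref{prop:general} via a uniform two-sided bound on $\beta_n$) matches the paper's; but a single unweighted application of McKay's theorem cannot substitute for the iterated weighted count, and that substitution is where the argument breaks.
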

\begin{proof}
For every $n\ge 1$, define $\lambda_n = \eta_n \oplus \tau_n$ and $\tilde\lambda_n= \tilde\eta_n \oplus \tau_n$.
Also, for every $G\in \cG_{d_1}$, set $\beta_n(G) = \sum_{H\in \cG_{D-d_1}: G\cap H = \emptyset} \tau_n(H)$.
We will show that, for all graphs $G, G'\in \cG_{d_1}$,
\begin{align}
\label{eq:BetaSameMass}
\beta_n(G)=\left(1+O\left(\frac{D^3}{n}\right)\right)\beta_n(G'),
\end{align}
where the implicit constant in the $O(\cdot)$ term does not depend on the parameters of the problem.
Note that both parts of the statement then follow by combining~\eqref{eq:BetaSameMass} and \Cref{prop:general}.

To this end, denote by $\Lambda(G)$ the family of $(k-1)$-tuples $(H_2,\ldots,H_k)\in \cG_{d_2}\times\dotsb\times\cG_{d_k}$ such that $G,H_2,\ldots,H_k$ are pairwise edge-disjoint. 
Then, by writing $D_i = d_1+\dotsb+d_i$ for all $i\in [k]$ and applying Theorem~\ref{thm:McKay} consecutively $k-1$ times for $X_{i-1} \coloneqq G\cup H_2\cup\dotsb\cup H_{i-1}$ and $(g_1,\ldots,g_n) = (d_i,\ldots,d_i)$ for all $i\in [2,k]$, we obtain that 
\[\abs{\Lambda(G)}=\prod_{i=2}^k \frac{(d_i n)!}{(d_i n/2)! \, 2^{d_i n/2} \prod_{j=1}^n d_j!} \exp\left(- \frac{d_i-1}{2} - \bigg(\frac{d_i-1}{2}\bigg)^2 - \frac{d_iD_{i-1}}{2} - O\bigg(\frac{d_i D_i^2}{n}\bigg)\right),\]
where we used that
\[\frac{1}{4e(G_i)}\sum_{j=1}^n d_i(d_i-1) = \frac{d_i-1}{2}\qquad\text{and}\qquad\frac{1}{2e(G_i)}\sum_{j\ell\in E(X_{i-1})} d_i^2 = \frac{d_i^2 |E(X_{i-1})|}{d_i n} = \frac{d_i D_{i-1}}{2}.\]
In particular, since $\sum_{i=2}^k d_i D_i^2\le D^3$ and
\[\beta_n(G) = \frac{|\Lambda(G)|}{|\{(H_2,\ldots,H_k)\in \cG_{d_2}\times\dotsb\times\cG_{d_k} \text{ pairwise edge-disjoint}\}|},\]
we have that \eqref{eq:BetaSameMass} holds, as desired.
\end{proof}

The last lemma in this section shows that, for $m\ge 2$ unlabelled regular graphs on $n$ vertices and with uniformly bounded degrees, by attributing labels in $[n]$ independently to each of them (thus realising them on the same vertex set), the probability that these graphs remain are pairwise edge-disjoint only depends on $m$ and the degrees of the given graphs.
This is shown via a moment computation indicating convergence of the number of repeated edges to a Poisson random variable.
A similar argument is presented in a slightly different context in Section~2.3 in~\cite{Wor99}; a proof is provided for completeness nonetheless.
Given a labelled graph $G$, the \emph{skeleton} of $G$ is the unlabelled graph obtained from $G$ by erasing the labels of its vertices.

\begin{lemma}\label{lem:disjoint}
Fix $m\ge 2$, $d_1,\ldots,d_m\ge 1$ and $m$ unlabelled regular graphs on $n$ vertices with degrees equal to $d_1,\ldots,d_m$, respectively.
For every $i\in [m]$, denote by $H_i$ the (random) graph obtained from the $i$-th unlabelled graph by attributing the labels in $[n]$ bijectively, uniformly at random and independently for different $i$.
Then, the probability that $H_1,\ldots,H_m$ are pairwise edge-disjoint is $\exp(-D/2)+o(1)$ where $D = \sum_{1\le k < \ell \le m} d_k d_\ell$ and the $o(1)$ is uniform over all choices of $H_1,\ldots,H_m$.
\end{lemma}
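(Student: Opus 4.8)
The plan is to use the method of moments: I will show that the number $Z$ of "collisions" (unordered pairs of indices $k<\ell$ together with an edge present in both $H_k$ and $H_\ell$) converges in distribution to a Poisson random variable with parameter $D/2$, uniformly over the choice of skeletons, and then read off $\Prob(Z=0) = \e^{-D/2} + o(1)$. Since $m$ and $d_1,\dots,d_m$ are bounded, $D$ is bounded, so no issue with uniformity of the Poisson approximation arises from the parameter growing.

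First I would fix the pair $(k,\ell)$ and count: $H_k$ has $d_k n/2$ edges and $H_\ell$ has $d_\ell n /2$ edges, and the label assignments are independent uniform bijections. For a fixed edge $e$ of (the labelled realisation of) $H_k$, the probability that $e$ is also an edge of $H_\ell$ is $(d_\ell n/2)/\binom{n}{2} = d_\ell/(n-1)$, because the labelled realisation of $H_\ell$ is a uniformly random relabelling, so each of its $d_\ell n/2$ edges is equally likely to be any of the $\binom n2$ pairs — more carefully, by symmetry the probability a fixed pair $\{a,b\}$ is an edge of $H_\ell$ equals (number of edges)$/\binom n2$. Hence $\EE Z = \sum_{k<\ell} (d_k n/2)\cdot d_\ell/(n-1) \to \sum_{k<\ell} d_k d_\ell/2 = D/2$. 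Next I would compute the factorial moments $\EE\binom{Z}{r}$, i.e.\ the expected number of ordered (or unordered) $r$-tuples of distinct collisions, and show each converges to $(D/2)^r/r!$. A generic $r$-tuple of collisions involves $r$ distinct (pair-of-indices, edge)-triples; when all the chosen edges lie in distinct graphs-pairs and are "spread out", the events are asymptotically independent and contribute $(D/2)^r/r! + o(1)$; configurations where edges share a vertex, share an index-pair in a conflicting way, or otherwise overlap form a lower-order term, since the number of such degenerate configurations is $O(n^{r-1})$ against a normalisation of $n^r$. This is the standard second-moment-style bookkeeping, e.g.\ as in Section~2.3 of~\cite{Wor99}.

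The main obstacle — really the only non-routine point — is controlling these overlap/degeneracy terms uniformly in the skeletons: one must check that within a single graph $H_i$, the number of pairs of edges sharing a vertex is $O(n)$ (true since degrees are bounded, giving at most $d_i^2 n$ such pairs), and that more generally a connected sub-configuration on $s$ vertices contributes $O(n^{s})$ choices but is pinned by enough constraints to lose at least one factor of $n$ per "extra" collision beyond a tree-like arrangement. Because all degrees and $m$ are $O(1)$, every such bound is uniform in the skeletons, which is exactly what the statement demands. Once all factorial moments converge to those of $\mathrm{Po}(D/2)$, the moment-convergence theorem for integer-valued random variables (or the standard Poisson-approximation criterion via factorial moments, see e.g.\ the Chen–Stein heuristic made rigorous through factorial moments) gives $Z \xrightarrow{d} \mathrm{Po}(D/2)$, and in particular $\Prob(H_1,\dots,H_m\text{ pairwise edge-disjoint}) = \Prob(Z=0) = \e^{-D/2} + o(1)$, with the error term uniform over all choices of $H_1,\dots,H_m$ as claimed.
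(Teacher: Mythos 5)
Your proposal follows essentially the same route as the paper: both apply the method of moments to show that a count of "collisions"/repeated edges converges in distribution to $\mathrm{Po}(D/2)$ uniformly over skeletons, then read off the probability of no collision. (You count triples (index-pair, common edge), whereas the paper counts edges lying in at least two graphs; since both vanish exactly on the pairwise edge-disjoint event and higher-multiplicity coincidences are negligible, this is an immaterial difference.)

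Where your sketch is too loose is precisely at the point you yourself flag as "the only non-routine point." The scaling you state for the degenerate configurations -- ``$O(n^{r-1})$ against a normalisation of $n^r$'' -- does not match the real bookkeeping: the main term arises from $\Theta(n^{2r})$ choices of $r$ vertex-disjoint edges each contributing probability $\Theta(n^{-2r})$, while a degenerate $r$-tuple spanning $t \le 2r-1$ vertices has $\Theta(n^t)$ choices and you need its probability to be $O(n^{-t-1})$, not merely $O(n^{-t})$, for it to be negligible. That extra factor of $n^{-1}$ is not automatic and is the heart of the paper's proof of \Cref{cla:repeated-edges}: for each $i$ one picks a spanning forest $F_i$ of $H_i\cap\{e_1,\dots,e_r\}$, reveals vertex labels along the forests, gets $\Prob(\forall i, F_i \subseteq H_i) = O(n^{-\sum_i e(F_i)})$, and the crucial observation is that when $e_1,\dots,e_r$ is not a matching one can choose the forests so that some vertex is incident to at least three forest edges (with multiplicity), forcing $\sum_i e(F_i) \ge t+1$. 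Without an argument of this kind --- or some substitute establishing that non-matching configurations genuinely lose a factor of $n$ in probability rather than in count --- the factorial-moment computation does not close, so this step needs to be supplied rather than asserted.
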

\begin{proof}
For convenience, we identify each graph with its edge set.
Denote by $X$ the number of repeated edges, that is, edges appearing in at least two of the graphs $H_1,\ldots,H_m$. 
To establish the lemma, it suffices to show that the factorial moments of $X$ converge to the factorial moments of a Poisson random variable with parameter $D/2$ (see, for instance, the case $k=1$ of Lemma~2.8 in \cite{Wor99}).

For every $r\ge 1$, denote by $\Lambda_r$ the set of (ordered) $r$-tuples of distinct edges in $K_n$, and denote by $\Lambda_r'\subseteq \Lambda_r$ the subset of $r$-tuples of edges which do not induce a matching. 
Observe that the $r$-th factorial moment of $X$ is given by the sum $\sum_{(e_1,\ldots,e_r)\in \Lambda_r} \mathbb P(e_1,\ldots,e_r\text{ are repeated edges})$. 
For~distinct edges $e_1,\dots,e_r$, we define the graphs $E_i = H_i\cap \{e_1,\dots, e_r\}$ for all $i\in [m]$.
We first show that the contribution of the $r$-tuples in $\Lambda_r'$ to the $r$-th factorial moment is of order $o(1)$.
\begin{claim}\label{cla:repeated-edges}
For every $r\ge 1$, $\sum_{(e_1,\ldots,e_r)\in \Lambda_r'} \mathbb P(e_1,\ldots,e_r\text{ are repeated edges}) = o(1)$.
\end{claim}\begin{proof}
     Note that 
\begin{equation}\label{eq:fact_moments}
\sum_{(e_1,\ldots,e_r)\in \Lambda_r'} \mathbb P(e_1,\ldots,e_r\text{ are repeated edges}) = \sum_{(e_1,\ldots,e_r)\in \Lambda_r'} \mathbb P(\forall i\in [r], \exists \text{ distinct }j,\ell\in [m], e_i\in H_j\cap H_{\ell}).   
\end{equation} 

Fix $(e_1,\ldots,e_r)\in \Lambda_r'$ and suppose that $e_1,\ldots,e_r$ all are repeated edges, and that $e_1\cup \cdots\cup e_r$ induces a graph $E$ on $t\in [2r-1]$ vertices. 
Denote by $\Psi_m$ the family of $m$-tuples of sets $E_1',\ldots,E_m'\subseteq \{e_1,\ldots,e_r\}$ where every edge among $e_1,\ldots,e_r$ is contained in at least two sets.
In particular, the event that $e_1,\ldots,e_r$ are repeated edges is the same as the event that $E_1,\dots,E_m$ is an $m$-tuple~in~$\Psi_m$.

Now, fix an $m$-tuple $(E_1',\ldots,E_m')$ in $\Psi_m$. For every $i\in [m]$, we assume that $E_i'$ has $s(i)\ge 1$ connected components containing at least one edge, and consider a forest $F_i$ consisting of disjoint trees $T_1^i,\ldots,T_{s(i)}^i$ spanning these components.
Observe that, since $(e_1,\ldots,e_r)\in \Lambda_r'$, we may (and do) select the said forests so that there is a vertex $v\in V(E)$ incident to at least three edges in $\bigcup_{i=1}^m F_i$, counting with multiplicity.

Given $e_1,\ldots,e_r$ and $F_1,\ldots,F_m$, we show that 
\begin{equation}\label{eq:labels}
\mathbb P(\forall i\in [m], F_i\subseteq H_i) \le \prod_{i=1}^m  \bigg(\frac{d_i}{n-2r}\bigg)^{e(F_i)}.
\end{equation}
To this end, we attribute labels to the skeletons of the graphs $H_1,\ldots,H_m$ in this order. 
For every $i\in [m]$, root each of the trees $T_1^i,\ldots,T_{s(i)}^i$ in arbitrary vertices and reveal the labels of the roots in the skeleton~of~$H_i$.
Starting from the roots, we order the remaining vertices in the forest $F_i$ so that every vertex $v$ is preceded by its parent $p(v)$.
Then, for every new vertex $v$, the probability that $v$ is adjacent to $p(v)$ in $H_i$ given the labels of all preceding vertices is at most $d_i/(n-2r)$ since $H_i$ is a $d_i$-regular graph which has received at most $t \le 2r-1$ of its vertex labels. 
Since the forest $F_i$ contains $e(F_i)$ edges,~\eqref{eq:labels} follows.

We show that 
$\sum_{i=1}^m e(F_i) \ge t+1$.
On the one hand, to see that the latter sum is bounded from below by $t$, note that every vertex incident to $e_1\cup\dotsb\cup e_r$ belongs to at least two different forests among $F_1,\ldots,F_m$ and, therefore, is incident to at least two edges in $\bigcup_{i=1}^m F_i$, counting with multiplicity. 
Moreover, to have $\sum_{i=1}^m e(F_i) = t$, every vertex in $E$ must be incident to \emph{exactly} two edges in $\bigcup_{i=1}^m F_i$, counting with multiplicity.
However, this assumption does not hold for the vertex $v$, showing that $\sum_{i=1}^m e(F_i) \ge t+1$.

Thus, for every $r$-tuple of edges in $\Lambda_r'$ and every choice of forests $F_1,\ldots,F_m$,~\eqref{eq:labels} is of order $O(n^{-t-1})$.
Since there are $O(n^t\cdot t^{2r})$ choices for the edges $e_1,\ldots,e_r$ and $O(2^{rm})$ choices for the forests $F_1,\ldots,F_m$, we conclude~\eqref{eq:fact_moments} is $o(1)$, as desired.
\end{proof}

By \Cref{cla:repeated-edges}, it suffices to estimate
\begin{equation}\label{eqn:goal}
\Sigma \coloneqq \sum_{(e_1,\dots, e_r)\in \Lambda_r\setminus \Lambda_r'} \mathbb P(e_1,\ldots,e_r\text{ are repeated edges})\,.
\end{equation}
To begin with, fix $(e_1,\dots, e_r)\in \Lambda_r\setminus \Lambda_r'$ and observe that
\begin{equation}\label{eq:Lambda}
|\Lambda_r\setminus \Lambda_r'| = \binom{n}{2r} \frac{(2r)!}{2^r} = (1+o(1)) \frac{n^{2r}}{2^r}.
\end{equation}
By utilising the idea of label attribution from the proof of \Cref{cla:repeated-edges} again, we conclude that the contribution coming from realisations of $E_1,\ldots,E_m$ where some of $e_1,\ldots,e_r$ is repeated at least three times is $o(1)$.

We end the proof by analysing the following procedure. We process the edges $e_1,\ldots,e_r$ consecutively. 
For every $i\in [r]$, we expose the labels of the endpoints of the edge $e_i =u_iv_i$ in each of the (unlabelled) graphs $H_1,\ldots,H_m$ and check if these graphs contain the edge $e_i$ or not.

Note that the events $e_i\in H_\ell$ and $e_i\in H_k$ are independent for distinct $\ell, k\in [m]$.
Next, we analyse the probability that $e_i$ is contained in $H_j$ for some fixed $j\in [m]$ conditionally on the vertex labels $u_1,\dots, u_{i-1},v_1,\dots v_{i-1}$ being exposed in $H_j$.
Note that $H_j\setminus \{u_1,v_1,\ldots,u_{i-1},v_{i-1}\}$ contains at least $nd_j/2 - 2(i-1)d_j = (1+o(1))nd_j/2$ edges, and the probability that $e_i$ is one of them is
\[(1+o(1))\frac{nd_j}{2}\bigg/\binom{n-2(i-1)}{2} = (1+o(1))\frac{d_j}{n}\,.\]
Hence, for every $i\in [r]$, conditionally on the past steps, the edge $e_i$ belongs to precisely two graphs among $H_1,\ldots,H_m$ with probability 
\begin{equation}\label{eq:1term}
(1+o(1)) \sum_{1\le k < \ell \le m} \frac{d_k d_\ell}{n^2} = (1+o(1)) \frac{D}{n^2}.
\end{equation}
By multiplying the latter probabilities for all $i\in [r]$ and using~\eqref{eq:Lambda}, it follows that
\begin{align*}
    \sum_{(e_1,\dots, e_r)\in \Lambda_r\setminus \Lambda_r'} \mathbb P(e_1,\ldots,e_r\text{ are repeated edges}) = |\Lambda_r\setminus \Lambda_r'| \left((1+o(1)) \frac{D}{n^2}\right)^r =  (1+o(1)) \left(\frac{D}{2}\right)^r.
\end{align*}
Combining this with \Cref{cla:repeated-edges}, we deduce that the $r$-th factorial moment of $X$ is equal to $(1+o(1)) (D/2)^r$, as desired.
\end{proof}

\section{Main technical result: decomposing one matching at a time}\label{sec:technical}

This section is dedicated to the proof of the following technical proposition.
For the sake of coherence, we state the following results with the same restriction of $d\le n^{1/10}$. The proof works for a slightly larger range of $d$, but clearly cannot go pass $n^{1/7}$. We made no attempt at determining the best possible range.
\begin{proposition}\label{prop:PM-counting}
There exists a constant $C_0> 0$ such that, for all even $n$ and $d = d(n)\in [3,n^{1/10}]$, the number of perfect matchings $Y$ in $G(n,d)$ satisfies
\[\Prob(|Y-\mathbb{E}[Y]| \geq d^{-1.1}\mathbb{E}[Y])\leq C_0d^{-1.1}.\]
\end{proposition}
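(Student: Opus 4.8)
The plan is to follow the orthogonal decomposition and projection method of Janson~\cite{Jan94}, in the quantitative form developed by Gao~\cite{Gao23} for degrees growing with $n$, and to show that the fluctuations of $Y$ are, up to a negligible error, driven by a single local statistic --- the number of triangles --- whose low moments can be computed directly. Throughout, all asymptotic notation is uniform over $d\in[3,n^{1/10}]$ as $n\to\infty$ (for bounded $d$ the statement is vacuous). Write $\mu=\EE[Y]$, let $T=T_n$ denote the number of triangles of $G(n,d)$, and set
\[
Y^\ast \coloneqq \mu + \frac{\mathrm{Cov}(Y,T)}{\mathrm{Var}(T)}\bigl(T-\EE[T]\bigr),
\]
the $L^2$-projection of $Y$ onto the affine functions of $T$ (well defined since $\mathrm{Var}(T)>0$ for $n$ large). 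By construction $\EE[Y^\ast]=\mu$ and $Y-Y^\ast$ is orthogonal to $T-\EE[T]$, so that $Y-\mu=(Y-Y^\ast)+(Y^\ast-\mu)$ with the two summands orthogonal in $L^2$.

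The argument rests on two inputs, each proved separately. The first (the refined variance estimate, obtained from Gao's variance bound~\cite[Theorem~10]{Gao23} by running the orthogonal decomposition with explicit control of the error terms) is that
\[
\EE\bigl[(Y-Y^\ast)^2\bigr]\le C_1\,d^{-4}\mu^2
\]
for an absolute constant $C_1>0$; heuristically, the contribution of $k$-cycles to $\mathrm{Var}(Y)/\mu^2$ is of order $(d-1)^{-k}$, so projecting out triangles --- the only short cycles of a simple graph --- removes the dominant $(d-1)^{-3}$ term and leaves $O((d-1)^{-4})$. In fact any bound of the form $o(d^{-3.3}\mu^2)$ would suffice below. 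We also record the cruder consequences $\mathrm{Var}(Y)=O(\mu^2 d^{-3})$ (again from~\cite[Theorem~10]{Gao23}) and $\mathrm{Var}(T)=\Theta(d^3)$, which via $|\mathrm{Cov}(Y,T)|\le\sqrt{\mathrm{Var}(Y)\mathrm{Var}(T)}$ give
\[
\Bigl|\frac{\mathrm{Cov}(Y,T)}{\mathrm{Var}(T)}\Bigr|\le\sqrt{\frac{\mathrm{Var}(Y)}{\mathrm{Var}(T)}}\le C_2\,\mu\, d^{-3}.
\]
The second input (a moment computation via the configuration model, classifying the intersection patterns of pairs, triples and quadruples of triangles --- every non-matching pattern contributing only a negligible $O(\mathrm{poly}(d)/n)$ in the range $d\le n^{1/10}$) is that $T$ is, for its first four moments, Poisson-like of mean $\Theta(d^3)$; in particular $\mathrm{Var}(T)=\Theta(d^3)$ and
\[
\EE\bigl[(T-\EE[T])^4\bigr]=O\bigl(d^{6}\bigr).
\]
Computing this central moment is precisely what requires both the third and the fourth raw moments of $T$.

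With these in hand, by the triangle inequality and a union bound,
\[
\Prob\bigl(|Y-\mu|\ge d^{-1.1}\mu\bigr)\le\Prob\bigl(|Y-Y^\ast|\ge\tfrac12 d^{-1.1}\mu\bigr)+\Prob\bigl(|Y^\ast-\mu|\ge\tfrac12 d^{-1.1}\mu\bigr).
\]
For the first term, Chebyshev's inequality and the refined variance estimate give a bound $\le 4d^{2.2}\,\EE[(Y-Y^\ast)^2]/\mu^2\le 4C_1 d^{-1.8}$. For the second, since $Y^\ast-\mu=b\,(T-\EE[T])$ with $|b|\le C_2\mu d^{-3}$, that event forces $|T-\EE[T]|\ge(2C_2)^{-1}d^{1.9}$; applying Markov's inequality to $(T-\EE[T])^4$ and the moment bound yields a contribution $\le(2C_2)^4\,\EE[(T-\EE[T])^4]/d^{7.6}=O(d^{-1.6})$. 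Adding the two and using $d\ge 3$ (so $d^{-1.8},d^{-1.6}\le d^{-1.1}$) gives $\Prob(|Y-\mu|\ge d^{-1.1}\mu)\le C_0\,d^{-1.1}$ for a suitable absolute constant $C_0$, as required. Note that the exponent $1.1>1$ is met with room to spare, which is convenient since this estimate is to be iterated $\Theta(d)$ times later on.

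The main obstacle is the first input: bringing $\EE[(Y-Y^\ast)^2]$ down to $O(d^{-4}\mu^2)$ uniformly over the growing-$d$ regime. Gao's variance estimate in its raw form carries error terms too large to be iterated, and sharpening it amounts to running Janson's orthogonal decomposition with all error terms made explicit --- which is exactly the place where the refined third and fourth moment estimates for the triangle count feed back in, upgrading the second-moment concentration of $T$ on which the decomposition relies. The triangle moment computation, though conceptually routine, is the other technically heavy ingredient, since it requires a careful enumeration of how several triangles can overlap in vertices and edges together with a verification that every such overlap is negligibly rare for $d\le n^{1/10}$.
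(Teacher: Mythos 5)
Your proposal is correct and follows essentially the same route as the paper: the same Janson--Gao orthogonal decomposition $Y = (Y-Y^\ast) + (Y^\ast - \mu)$ with $Y^\ast$ the affine projection onto the triangle count, the same split via a union bound, Chebyshev on $Y-Y^\ast$ using the refined bound $\mathrm{Var}(Y-Y^\ast)=O(\mu^2 d^{-4})$, and a fourth-moment Markov bound on $Y^\ast-\mu$ driven by $\mathbb{E}[(T-\mathbb{E}T)^4]=O(d^6)$ --- producing the same $O(d^{-1.8})+O(d^{-1.6})$ tail, and identifying the same two heavy inputs (the paper's Proposition~\ref{prop:Estimate-E[(Y-Y*)^2]} and Lemma~\ref{lem:estimate-2}) as the work to be done. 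The only cosmetic difference is that you bound the regression coefficient via Cauchy--Schwarz, $|b|\le\sqrt{\mathrm{Var}(Y)/\mathrm{Var}(T)}$, whereas the paper plugs in Gao's asymptotics for $\mathrm{Cov}(X,Y)$ and $\mathrm{Var}(X)$ directly; both give $|b|=O(\mu d^{-3})$.
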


The following consequence of \Cref{thm:strassen} and \Cref{prop:PM-counting} will be used in the proofs of each of our main results.

\begin{corollary}\label{cor:dTV}
There exists a constant $C_1 > 0$ such that, for all even $n$ and $d = d(n)\in [3,n^{1/10}]$,
\[\dtv(\mu_{d} \oplus \mu_1, \mu_{d+1}) \leq C_1 d^{-1.1}.\]
\end{corollary}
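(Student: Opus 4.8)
The plan is to realise $G(n,d+1)$ as a two-step random process --- first sample a $d$-regular graph, then add a random perfect matching in its complement --- and compare this with the definition of $\mu_d \oplus \mu_1$, which differs only in how the underlying $d$-regular graph is weighted. Concretely, for a $d$-regular graph $G$ on $[n]$, let $m(G)$ denote the number of perfect matchings in the complement $\overline G$. Then $\mu_{d+1}$ can be generated by picking $G$ with probability proportional to $m(G)$ and then adding a uniformly random perfect matching of $\overline G$; whereas $\mu_d\oplus\mu_1$ picks $G$ uniformly from $\cG_d(n)$ and then adds a uniform perfect matching of $\overline G$. Hence the two output distributions are both mixtures over the same family of "second-stage" distributions, and a standard coupling bound gives
\[
\dtv(\mu_d\oplus\mu_1,\mu_{d+1}) \le \dtv\!\left(\mu_d,\ \tfrac{m(\cdot)}{\sum_{G'} m(G')}\right) = \frac{1}{2}\sum_{G\in\cG_d(n)} \left| \frac{1}{|\cG_d(n)|} - \frac{m(G)}{\sum_{G'\in\cG_d(n)} m(G')}\right|.
\]
So it suffices to show that $m(G)$ is, for all but a $d^{-1.1}$-fraction of the mass of $\mu_d$, within a $(1\pm O(d^{-1.1}))$ factor of its average; then the right-hand side is $O(d^{-1.1})$.

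The key step is to relate $m(G)$ --- the number of perfect matchings avoiding the edges of $G$ --- to $Y$, the number of perfect matchings \emph{in} $G$ from Proposition~\ref{prop:PM-counting}. For $G\sim G(n,d)$, the complement $\overline G$ is distributed as $G(n,n-1-d)$, so $m(G)$ is exactly the perfect matching count of a random $(n-1-d)$-regular graph. However, Proposition~\ref{prop:PM-counting} is stated for degrees in $[3,n^{1/10}]$, not for degree $n-1-d$; instead I would invoke Theorem~\ref{thm:McKay} directly. Applying Theorem~\ref{thm:McKay} with host graph $X = G$ (which has $x_i = d$ for all $i$) and target degree sequence $g_i = 1$ for all $i$ gives an explicit asymptotic formula for $m(G)$: here $e(G) = n/2$, $\lambda = 0$, $\hat\Delta = O(d)$, and $\mu = \frac{1}{n}\sum_{ij\in E(G)} 1 = d/2$ is \emph{constant} over all $d$-regular $G$. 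Thus Theorem~\ref{thm:McKay} yields $m(G) = \frac{n!}{(n/2)!\,2^{n/2}}\exp\!\big(-d/2 - O(d^2/n)\big)$ for \emph{every} $d$-regular graph $G$, with the error term uniform. Since $d\le n^{1/10}$, the error $O(d^2/n) = o(1)$ is the same for all $G$, so in fact $m(G)$ is the \emph{same} up to a $(1\pm o(1))$ factor across all of $\cG_d(n)$ --- which would make the total variation distance $o(1)$ rather than $O(d^{-1.1})$.

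This suggests the bound should actually be cleaner than claimed, but to match the stated form one must be careful: the $o(1)$ coming from the $O(d^2/n)$ error in Theorem~\ref{thm:McKay} need not be smaller than $d^{-1.1}$ for small $d$, since $d^2/n$ can exceed $d^{-1.1}$ once $d \ge n^{1/3}$-ish --- but here $d\le n^{1/10}$ so $d^2/n \le n^{-4/5} \ll d^{-1.1}$ always; hence the error is genuinely negligible and the corollary follows, with room to spare, from the uniformity of Theorem~\ref{thm:McKay}. The main subtlety I anticipate is \emph{not} the counting but the reduction: one must verify that the ``add a uniform perfect matching of $\overline G$'' second stage really does produce $\mu_{d+1}$ when $G$ is weighted by $m(G)$ --- this is a routine double-counting of (graph, matching) pairs, since each $(d+1)$-regular graph $H$ arises from exactly $d+1$ such pairs (one for each perfect matching it contains, provided $H$ decomposes appropriately) --- wait, more carefully, each $(d+1)$-regular $H$ with a distinguished perfect matching $M\subseteq H$ gives the pair $(H\setminus M, M)$, and $H\setminus M$ is $d$-regular, so the number of pairs mapping to $H$ equals the number of perfect matchings of $H$, call it $p(H)$. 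So the weighting is by $p(H)$, not uniform, and one needs $p(H)$ to concentrate --- which is exactly Proposition~\ref{prop:PM-counting}. So the honest argument is: condition on the second stage, and the first-stage discrepancy is controlled by how much $m(G)$ varies (negligible, by Theorem~\ref{thm:McKay}), while Proposition~\ref{prop:PM-counting} is what makes the \emph{push-forward} onto $\cG_{d+1}$ well-behaved --- it is Proposition~\ref{prop:PM-counting} applied to $G(n,d+1)$ (degree $d+1 \le n^{1/10}+1$, fine after adjusting constants) that shows $p(H)$ concentrates, and combining the two concentration statements via Theorem~\ref{thm:strassen} (to build the actual coupling) gives the $O(d^{-1.1})$ bound. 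The hard part is bookkeeping the interplay of these two layers so that the dominant error term is precisely the $d^{-1.1}$ from Proposition~\ref{prop:PM-counting} rather than something weaker.
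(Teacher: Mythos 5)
Your two-step decompositions are mislabeled, and this error propagates through most of the argument. Picking $G$ with probability proportional to $m(G)$ and then adding a uniform perfect matching of $\overline G$ makes the pair $(G,M)$ uniform over all edge-disjoint pairs, so the output $H=G\cup M$ has probability proportional to $p(H)$, the number of perfect matchings of $H$ --- that is $\mu_d\oplus\mu_1$, not $\mu_{d+1}$. Picking $G$ uniformly and then adding a uniform matching of $\overline G$ produces output proportional to $\sum_{M\subseteq H}1/m(H\setminus M)$, which is neither measure. So the displayed inequality $\dtv(\mu_d\oplus\mu_1,\mu_{d+1})\le\dtv\bigl(\mu_d,\ m(\cdot)/\sum_{G'}m(G')\bigr)$ compares the wrong quantities. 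Indeed, if your labeling were correct, Theorem~\ref{thm:McKay} would give a TV bound of $o(1)$ uniformly, strictly stronger than the corollary's $O(d^{-1.1})$; that discrepancy should have been a warning sign rather than a bonus.

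You do spot the problem partway through (``so the weighting is by $p(H)$, not uniform'') and ultimately name the right ingredients --- Proposition~\ref{prop:PM-counting} applied to $G(n,d+1)$ and Theorem~\ref{thm:strassen} --- which is precisely what the paper uses. But the correction is incomplete: there is no ``first-stage discrepancy controlled by how much $m(G)$ varies'', and Theorem~\ref{thm:McKay} plays no role whatsoever. Once you write both measures on $\cG_{d+1}(n)$, namely $\mu_d\oplus\mu_1\propto p(\cdot)$ and $\mu_{d+1}$ uniform, the total variation distance equals $\mathbb E\bigl[(1-Y/\mathbb E[Y])_+\bigr]$ with $Y$ the perfect-matching count of $G(n,d+1)$, and this is at most $(1+C_0)d^{-1.1}$ directly from Proposition~\ref{prop:PM-counting} by bounding the integrand by $d^{-1.1}$ on the good event and by $1$ on the bad one. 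The paper packages the same fact as a Strassen coupling on the bipartite graph between $\{(G_d,G_{d+1}):G_d\subseteq G_{d+1}\}$ and $\cG_{d+1}(n)$, where the degree condition on the $\cG_{d+1}$ side is exactly the concentration of $p$. In either form, only one concentration statement is needed, and it concerns $p$, not $m$. Your observation that $d+1$ may slightly exceed $n^{1/10}$ is correct but minor; the authors acknowledge the range is not tight.
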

\begin{proof}[Proof of \Cref{cor:dTV} assuming \Cref{prop:PM-counting}]
Consider an auxiliary bipartite graph $H$ with parts 
\[S = \{(G_d,G_{d+1})\in \cG_d(n)\times \cG_{d+1}(n): G_d\subseteq G_{d+1}\}\] 
and $T = \cG_{d+1}(n)$ where $(G_d,G_{d+1})\in S$ is adjacent to $G_{d+1}'\in T$ if $G_{d+1}=G_{d+1}'$.
Note that every vertex in $S$ has exactly one neighbour in $T$ and thus $|E(H)|=|S|$.
Moreover, the degree of a vertex $G$ in $T$ is equal to the number of perfect matchings in $G$, meaning that $|E(H)|=|T|\cdot \mathbb{E}[Y]$, where $Y$ is the number of perfect matchings in $G(n,d+1)$.
By \Cref{prop:PM-counting}, at least $(1-C_0 d^{-1.1})|T|$ of the vertices in $T$ have at least $(1-d^{-1.1})\mathbb{E}[Y]=(1-d^{-1.1})|E(H)|/|T|$ neighbours in $S$.
By applying \Cref{thm:strassen} to the graph $H$ with $\eps=d^{-1.1}$ and $\delta=C_0d^{-1.1}$, we find a coupling $(X,Z)$ with $X$ uniformly distributed on $S$ and $Z$ uniformly distributed on $T$ such that 
\begin{align*}
    \mathbb{P}(XZ \notin E(H)) = O(d^{-1.1}).
\end{align*}
However, by definition, the second marginal $X'$ of $X$ is distributed according to $\mu_d\oplus\mu_1$, and thus
\begin{align*}
    \dtv(\mu_{d} \oplus \mu_1, \mu_{d+1}) \leq \mathbb{P}(X' \neq Z) = \mathbb{P}(XZ \notin E(H)) = O(d^{-1.1}),
\end{align*}
which concludes the proof.
\end{proof}

In the remainder of this section, we focus on the proof of \Cref{prop:PM-counting}.
Recall that $Y$ stands for the number of perfect matchings in the random $d$-regular graph $G(n,d)$. 
To show \Cref{prop:PM-counting}, one may be tempted to compute the first two moments of $Y$ and apply Chebyshev's inequality.
While such an estimate exists in the literature~\cite{Gao23}, it only shows that $\mathbb P(|Y-\mathbb E[Y]|\ge d^{-1}\mathbb E[Y]) = O(d^{-1})$, which is insufficient for our purposes due to the divergence of the harmonic sum.
We extend this approach by analysing higher moments.

Computing higher moments of $Y$ directly becomes a complicated task.
One elegant approach for studying the number of perfect matchings relies on a method known as orthogonal decomposition and projection introduced by Janson~\cite{Jan94} and first applied in the setting of random regular graphs by Gao~\cite{Gao23}.
Here, we follow Gao's presentation in~\cite{Gao23}.
We define $X$ to be the number of triangles in $G(n,d)$ and $Y^* = aX + b$ with $a=\mathrm{Cov}(X,Y)/ \mathrm{Var}[X]$ and $b = \mathbb{E}[Y] - a\mathbb{E}[X]$, where we recall that $\mathrm{Cov}(X,Y) = \mathbb E[XY]-\mathbb E[X]\mathbb E[Y]$.
Then, by choice of $a$ and $b$, it is easy to verify that $\mathbb{E}[Y^*] = \mathbb{E}[Y]$ but it also holds that $Y^*$ minimises $\mathbb{E}[(Y-Y^*)^2]$. 
We will control the moments of $Y - Y^*$ and $Y^*$ separately. 

First, we collect several relevant estimates due to Gao~\cite{Gao23,Gao23b}. 
We recall that asymptotic notation is used with respect to $n\to\infty$ and hidden constants are universal.
\begin{claim}[\cite{Gao23,Gao23b}]\label{cla:estimate-1}
Consider $d = d(n)\in [3,n^{1/10}]$. Each of the following holds.
\begin{enumerate}
\setlength\itemsep{0.3em}
\item[\emph{1.}] $\mathrm{Cov}(X,Y) = O(\mathbb{E}[X]\mathbb{E}[Y]/d^3)$,
\item[\emph{2.}] $\mathbb{E}[X] = (1+O(n^{-1}))(d-1)^3/6$.
\item[\emph{3.}] $\mathrm{Var}(X) = (1+O(n^{-1}))(d-1)^3/6$.
\item[\emph{4.}] $\mathbb{E}[Y^2] = (1 + d^{-3}/6 + O(d^{-4} + d^{-3}/n + \sqrt{d/n}(\log n)^3)) \mathbb{E}[Y]^2$.
\end{enumerate}
\end{claim}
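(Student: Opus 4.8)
My plan is to import the second, third and fourth estimates from Gao's work~\cite{Gao23,Gao23b} --- the fourth being the one with real content --- and to deduce the first estimate from the third and fourth by the Cauchy--Schwarz inequality. The second and third estimates are the first two moments of the triangle count $X$. For the first moment I would work in the configuration (pairing) model: a triangle on a fixed triple $\{u,v,w\}$ is realised by choosing at each of $u,v,w$ an ordered pair of distinct half-edges and requiring the three resulting half-edge pairs to lie in the random pairing of the $dn$ half-edges, which gives
\[
\mathbb{E}[X_{\mathrm{config}}] \;=\; \binom{n}{3}\,\frac{[d(d-1)]^3}{(dn-1)(dn-3)(dn-5)} \;=\; \bigl(1+O(n^{-1})\bigr)\frac{(d-1)^3}{6},
\]
and in the range $d\le n^{1/10}$ the transfer to the uniform model $G(n,d)$ (conditioning on the absence of loops and multiple edges) costs only a factor $1+O(n^{-1})$ by the asymptotic enumeration machinery of~\cite{MW91,Wor99,Gao23b}. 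The variance estimate follows by the same method applied to the second factorial moment of $X$, carrying out the standard but slightly delicate bookkeeping of the cancellations among pairs of triangles that share vertices; this is done in~\cite{Gao23b} and I would simply cite it.

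The fourth estimate is the substantive one, and I would import it as a black box from~\cite[Theorem~10]{Gao23}. Its proof also works in the pairing model: $\mathbb{E}[Y^2]$ is the expected number of ordered pairs $(M_1,M_2)$ of perfect matchings, $M_1\cup M_2$ is a disjoint union of doubled edges and even cycles, one sums over the resulting cycle type via a generating-function and saddle-point analysis, and one controls the effect of conditioning on simplicity; the leading term is $\mathbb{E}[Y]^2$ and the correction $d^{-3}/6$ is contributed by the smallest local structures (doubled edges and alternating $4$-cycles in $M_1\cup M_2$). This is the hardest ingredient here, and it is also what pins down the admissible range of $d$ throughout the paper; reproving it is well beyond the scope of the present claim.

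The first estimate is then immediate, and this is the one place where I would actually carry out an argument. By Cauchy--Schwarz, $\abs{\mathrm{Cov}(X,Y)} \le \sqrt{\mathrm{Var}(X)\,\mathrm{Var}(Y)}$. The third estimate gives $\mathrm{Var}(X) = \bigl(1+O(n^{-1})\bigr)(d-1)^3/6 = \Theta(d^3)$. For the fourth estimate, note that each of the error terms $d^{-4}$, $d^{-3}/n$ and $\sqrt{d/n}(\log n)^3$ is $O(d^{-3})$ when $d\in[3,n^{1/10}]$ (for the last one, $d^{7/2}(\log n)^3 \le n^{7/20}(\log n)^3 = o(n^{1/2})$), so
\[
\mathrm{Var}(Y) \;=\; \mathbb{E}[Y^2]-\mathbb{E}[Y]^2 \;=\; \bigl(d^{-3}/6 + O(d^{-3})\bigr)\mathbb{E}[Y]^2 \;=\; O\bigl(\mathbb{E}[Y]^2 d^{-3}\bigr).
\]
Combining this with $\mathrm{Var}(X)=\Theta(d^3)$ yields $\abs{\mathrm{Cov}(X,Y)} \le \sqrt{\mathrm{Var}(X)\,\mathrm{Var}(Y)} = O(\mathbb{E}[Y])$, which is $O(\mathbb{E}[X]\mathbb{E}[Y]/d^3)$ because $\mathbb{E}[X] = \Theta(d^3)$ by the second estimate. (One could instead prove the first estimate directly, as in~\cite{Gao23}, by estimating $\mathbb{E}[XY]$ in the pairing model, but that requires some care with the simplicity conditioning, whereas the Cauchy--Schwarz route uses nothing beyond the third and fourth estimates.) The only genuine obstacle is therefore the fourth estimate, imported from~\cite{Gao23}; everything else is routine enumeration plus the short deduction just given.
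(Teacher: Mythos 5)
Your handling of points 2--4 coincides with the paper's: \Cref{cla:estimate-1} is stated as a citation, and the paper simply attributes points 2 and 4 to \cite[Theorems~9 and~10]{Gao23} and point 3 to \cite[Section~4.2]{Gao23b}, so your configuration-model sketches, while plausible, are not load-bearing. The genuine difference is point 1, which the paper also takes from \cite[Theorem~10]{Gao23} but which you derive from points 3 and 4 by Cauchy--Schwarz. Your derivation is correct for the statement as written: each error term in point 4 is $O(d^{-3})$ for $d\le n^{1/10}$, so $\mathrm{Var}(Y)=O(d^{-3})\mathbb{E}[Y]^2$, and with $\mathrm{Var}(X)=\Theta(d^3)$ and $\mathbb{E}[X]=\Theta(d^3)$ one gets $|\mathrm{Cov}(X,Y)|\le\sqrt{\mathrm{Var}(X)\,\mathrm{Var}(Y)}=O(\mathbb{E}[Y])=O(\mathbb{E}[X]\mathbb{E}[Y]/d^3)$. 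What this buys is that point 1 becomes a corollary rather than an extra import; what it loses is the leading constant, and that matters for how the claim is actually used downstream: in the proof of \Cref{prop:Estimate-E[(Y-Y*)^2]} the paper invokes ``the first point'' but in fact uses the two-sided asymptotic $\mathrm{Cov}(X,Y)=(d^{-3}+O(d^{-4}+d/n))\mathbb{E}[X]\mathbb{E}[Y]$ to obtain $\mathrm{Var}[Y^*]=\mathbb{E}[Y]^2/(6d^3)+O(\mathbb{E}[Y]^2/d^4)$ and hence the crucial cancellation $\mathrm{Var}[Y-Y^*]=\mathrm{Var}[Y]-\mathrm{Var}[Y^*]=O(\mathbb{E}[Y]^2/d^4)$. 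Cauchy--Schwarz yields only an upper bound on $|\mathrm{Cov}(X,Y)|$ and no matching lower bound, so if you adopt your route you must still import the sharp covariance estimate from \cite{Gao23} for that later step; as a proof of \Cref{cla:estimate-1} exactly as stated, however, your argument is sound.
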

The first and fourth points are from~\cite[Theorem~10]{Gao23}, the second point is from~\cite[Theorem~9]{Gao23}, and the third point follows from~\cite[Section 4.2]{Gao23b}. 
Another tool used in our subsequent analysis is the following theorem from~\cite{Gao23b} estimating the probability that an edge belongs to a random regular graph conditionally on some part of it being already exposed.

\begin{theorem}[Theorem 2 in~\cite{Gao23b}]\label{thm:subgraph-prob} 
Fix $d=o(n)$, a random $d$-regular graph $G\sim G(n,d)$ and a graph $H$ on the same vertex set and with maximum degree at most $d$. 
Suppose that $dn - |E(H)| = \Omega(dn)$ and that $uv\notin E(H)$ for some vertices $u,v$. 
Then, 
    \[
        \Prob(uv\in G\mid H\subseteq G) =  \frac{(d-\deg_H(u))(d-\deg_H(v))}{dn}\left(1 - \frac{\phi_H(uv)}{dn}\right)\left(1 + O\left(\frac{|E(H)|}{n^2} + \frac{|E(H)|^2}{d^2n^2} + \frac{d^2}{n^2}\right)\right)
    \]
    where 
    \[\phi_H(uv) = \deg_H(u) \deg_H(v) + \!\! \sum_{x\in N_H(u)} \!\! \deg_H(x) + \!\! \sum_{y\in N_H(v)} \!\! \deg_H(y) - d - 2|E(H)| - (d-1)(\deg_H(u) + \deg_H(v))\]
    with $N_H(u)$ and $N_H(v)$ denoting the neighbourhoods of $u$ and $v$ in $H$, respectively.
    Thus, if we have $|E(H)|=O(1)$ and $d^2\le n$, then
     \[
        \Prob(uv\in G\mid H\subseteq G) = \left(1 +O\left( \frac{1}{n} \right)\right)\frac{(d-\deg_H(u))(d-\deg_H(v))}{dn}\,.
    \]
\end{theorem}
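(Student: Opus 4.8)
The plan is to realise $G(n,d)$ as the skeleton of the pairing (configuration) model conditioned on simplicity, and to express the conditional probability as a product of an exact combinatorial ratio and a ``conditional-simplicity'' ratio that is handled by sharp pairing-model enumeration. Let $P$ be a uniformly random perfect matching of the $dn$ half-edges grouped into $n$ cells of size $d$, let $G(P)$ be the resulting (multi)graph, and let $\cS$ be the event that $G(P)$ is simple; conditioning on $\cS$ recovers $G(n,d)$, since on $\cS$ each simple $d$-regular graph has exactly $(d!)^n$ preimages. For a graph $F$ of maximum degree at most $d$ with $uv\notin E(F)$, on $\cS$ the event $\{F\subseteq G(P)\}$ is a disjoint union, over the $A(F):=\prod_x(d)_{\deg_F(x)}$ choices of which half-edges realise $E(F)$, of events ``$P$ extends a fixed partial matching $M_F$ realising $F$''. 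Applying this with $F=H$ and $F=H+uv$ (the result not depending on the choices of $M_H, M_{H+uv}$, by symmetry) gives
\[
\Prob(uv\in G\mid H\subseteq G)=\frac{A(H+uv)}{A(H)}\cdot\frac{\Prob(P\supseteq M_{H+uv})}{\Prob(P\supseteq M_{H})}\cdot\frac{\Prob(\cS\mid P\supseteq M_{H+uv})}{\Prob(\cS\mid P\supseteq M_{H})}.
\]

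The first two factors are exact and elementary: $A(H+uv)/A(H)=(d-\deg_H(u))(d-\deg_H(v))$, counting which free half-edge in the cells of $u$ and $v$ carries the new edge, and $\Prob(P\supseteq M_{H+uv})/\Prob(P\supseteq M_{H})=(dn-2|E(H)|-3)!!/(dn-2|E(H)|-1)!!=(dn-2|E(H)|-1)^{-1}$. Their product equals $\frac{(d-\deg_H(u))(d-\deg_H(v))}{dn}\big(1+\frac{2|E(H)|+1}{dn}+O\big(\frac{|E(H)|^2}{d^2n^2}\big)\big)$, so after folding the $\frac{2|E(H)|+1}{dn}$ correction into the definition of $\phi_H(uv)$ it remains to show that the third factor equals $1-\phi_H(uv)/(dn)$ up to the stated error.

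For the third factor, condition on $P\supseteq M_F$: the remaining half-edges are matched uniformly, and $\cS$ holds exactly when this matching creates no loop, no repeated pair, and no pair parallel to an edge of $M_F$. The required input is the sharp asymptotics of the probability of this event, together with its sensitivity to the single extra pair of $M_{H+uv}$ relative to $M_H$; this is provided by the precise enumeration of the pairing model conditioned on simplicity — the analysis behind the correction $\exp(-\lambda-\lambda^2-\mu-\cdots)$ in \Cref{thm:McKay} — applied to the truncated pairing on the free half-edges. Forcing the edge $uv$ removes one free half-edge from each of the cells of $u$ and $v$, which perturbs the expected numbers of loops, of repeated pairs (the dominant statistic as $d\to\infty$) and of pairs parallel to edges of $M_F$; carrying out this perturbation to additive precision $o(1)$ in the exponent produces the factor $1-\phi_H(uv)/(dn)+O(\cdots)$, the individual terms of $\phi_H(uv)$ arising from the local loop/repeated-pair count near $u$ and $v$, from the edges of $H$ incident to $N_H(u)\cup N_H(v)$, and from the bookkeeping of free half-edges. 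The final ``Thus'' assertion then follows, since $|E(H)|=O(1)$ forces every $\deg_H(\cdot)=O(1)$, so $\phi_H(uv)=O(d)$ and, as $d^2\le n$, every error term is $O(1/n)$.

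The crux, and the main obstacle, is this last perturbation step. Since the simple-skeleton probability is $\exp(-\Theta(d^2))$ whereas a one-edge change to $F$ shifts the exponent only by $O(d/n)$, the enumeration must be used not as a black box but with control on the \emph{increment} down to additive order $o(1)$; in particular the second-order contributions genuinely matter, as the relevant statistics have size up to $\Theta(d^2)$ and their first-order perturbation has size $\Theta(d/n)$, so products of these reach precisely the order $1/n$ to which the answer is sensitive. This is also why a direct appeal to \Cref{thm:McKay} for the degree sequences $d-\deg_H(\cdot)$ and $d-\deg_{H+uv}(\cdot)$ does not suffice: it controls each count only up to a factor $1+O(\widehat\Delta^2/(dn))=1+O(d^3/n)$, which fails to cancel in the ratio and swamps the target error. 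An equivalent route works directly with simple graphs, comparing $|\{G\in\cG_d(n):H+uv\subseteq G\}|$ with $|\{G\in\cG_d(n):H\subseteq G,\ uv\notin G\}|$ via a switching that deletes $uv$ and re-wires two endpoints; there the same difficulty resurfaces as the need to evaluate the switching counts exactly and, in addition, to show that local quantities such as the number of triangles through $uv$ or of edges inside $N_G(u)$ — which can be atypically large for particular graphs — contribute only $O(d^3/n)$ on average.
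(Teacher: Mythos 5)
The statement you are proving is not proved in the paper at all: it is quoted verbatim as Theorem~2 of Gao's paper on triangles and subgraph probabilities in random regular graphs, so there is no internal proof to compare against, and your attempt has to be judged on its own.

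Your framework is the right one and the exact part is correct: realising $G(n,d)$ as the pairing model conditioned on simplicity, the factor $A(H+uv)/A(H)=(d-\deg_H(u))(d-\deg_H(v))$ and the ratio $\Prob(P\supseteq M_{H+uv})/\Prob(P\supseteq M_H)=(dn-2|E(H)|-1)^{-1}$ are both exact and elementary, and the decomposition of the conditional probability into these two factors times the ratio of conditional simplicity probabilities is valid. The gap is that the entire nontrivial content of the theorem --- the precise linear form of $\phi_H(uv)$, including the terms $\sum_{x\in N_H(u)}\deg_H(x)$, $-2|E(H)|$ and $-(d-1)(\deg_H(u)+\deg_H(v))$, together with the error bound $O(|E(H)|/n^2+|E(H)|^2/(d^2n^2)+d^2/n^2)$ --- lives in that third factor, and you never compute it. You assert that ``carrying out this perturbation to additive precision $o(1)$ in the exponent produces the factor $1-\phi_H(uv)/(dn)$'' and then spend your closing paragraph explaining, correctly, why this is the hard step and why black-box appeals to \Cref{thm:McKay} or crude switching counts fail (the simplicity probability is $\exp(-\Theta(d^2))$ while the increment you need is of size $O(d/n)$, so second-order terms in the loop/multiple-edge statistics must be tracked explicitly). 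Diagnosing the obstacle is not the same as overcoming it: without the explicit second-order expansion of $\log\Prob(\cS\mid P\supseteq M_F)$ under the addition of one forced pair --- which in Gao's paper is a substantial multi-page computation --- you have established neither the displayed formula nor even the weaker ``Thus'' consequence, since the latter still requires the third factor to be $1+O(1/n)$. As it stands, the proposal is a correct reduction plus an accurate description of what remains to be done, not a proof.
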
 

In~\cite[Section~5.2]{Gao23b}, it is shown that $\mathbb{E}[X^k] = (1+o(1))\mathbb{E}[X]^k$ for all fixed $k\ge 1$ when $d=\omega(1)$ and~$d = o(n^{1/2})$.
However, this estimate is not sufficiently precise for our purposes when $k \in \{2,3,4\}$.
The goal of the next lemma is to go further in the understanding of the error term when $d\le n^{1/10}$.
We remark that, while the same arguments could be extended to larger values of $k$ at the cost of an additional technical effort (and smaller values for $d$), we focus on the cases $k \in \{2,3,4 \}$ relevant to our proof. 

\begin{lemma}\label{lem:estimate-2}
Consider $d = d(n)\in [3,n^{1/10}]$. 
Then, for each $k\in \{2,3,4\}$,
\[\mathbb{E}[X^k] = \mathbb{E}[X]^k + \binom{k}{2}\mathbb{E}[X]^{k-1} + O(\mathbb{E}[X]^{k-2})\,.\]
\end{lemma}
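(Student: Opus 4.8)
\textbf{Proof strategy for \Cref{lem:estimate-2}.}
The plan is to compute $\mathbb{E}[X^k]$ by expanding the $k$-th power of the triangle count as a sum over ordered $k$-tuples of triangles in $K_n$, and grouping the tuples according to how many vertices the triangles share. Writing $X = \sum_T \mathbf 1[T\subseteq G(n,d)]$ where $T$ ranges over the $\binom n3$ possible triangles, we get
\[
\mathbb{E}[X^k] = \sum_{(T_1,\dots,T_k)} \Prob(T_1\cup\dots\cup T_k\subseteq G(n,d)).
\]
For a tuple whose union $F = T_1\cup\dots\cup T_k$ has $v$ vertices and $e$ edges, \Cref{thm:subgraph-prob} (iterated edge by edge, valid since $|E(F)|=O(1)$ and $d\le n^{1/10}$ so $d^2\le n$) gives $\Prob(F\subseteq G(n,d)) = (1+O(1/n))^{e}\,(d/n)^{e}\prod(\text{degree corrections})$; since all degrees in $F$ are $O(1)$, this is $\Theta((d/n)^e)$, and more precisely $(1+O(1/n))(d/n)^e$ up to a bounded multiplicative constant depending only on the isomorphism type of $F$. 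The number of tuples realising a given union-type with $v$ vertices is $\Theta(n^v)$. So the contribution of a union-type is of order $n^{v-e}d^e = n^{v}(d/n)^e$, and since $\mathbb{E}[X] = \Theta(d^3)$ by \Cref{cla:estimate-1}, we should track which types contribute at the scale $\mathbb{E}[X]^k \asymp d^{3k}$, which is $\mathbb{E}[X]^{k-1}\asymp d^{3k-3}$, and which are $O(\mathbb{E}[X]^{k-2})\asymp d^{3k-6}$ or smaller.

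The key combinatorial step is to classify the union-types. The dominant term comes from the $k$ triangles being vertex-disjoint: there are $(1+O(1/n))\binom n3^k k!/k! \cdot(\text{ordering})$ — more carefully $(n^3/6)^k(1+O(1/n))$ such tuples — and each has probability $(1+O(1/n))((d-1)^3/n^3)\cdot$(analogous factors), giving exactly $\mathbb{E}[X]^k$ after matching with the formula $\mathbb{E}[X]=(1+O(n^{-1}))(d-1)^3/6$; the error here is $O(\mathbb{E}[X]^k/n)$ which is absorbed into $O(\mathbb{E}[X]^{k-2})$ since $d\le n^{1/10}$ forces $d^{3k}/n = o(d^{3k-6})$. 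The next term comes from tuples where exactly two of the triangles share exactly one vertex and the rest are disjoint from everything: such a configuration has $v = 3k-1$ vertices and $e = 3k$ edges, there are $\binom k2$ ways to pick the pair, $3\cdot 3 = 9$ ways to choose which vertex of each is identified, and $\Theta(n^{3k-1})$ placements; multiplying by the probability $\Theta(d^{3k}/n^{3k})$ gives a contribution of the exact order $d^{3k}/n \cdot n^{?}$ — wait, rather $n^{3k-1}(d/n)^{3k} = d^{3k}/n$, which is again too small. The genuine second-order term instead comes from two triangles sharing an \emph{edge} (hence two vertices): then $v = 3k-2$... no: two triangles sharing an edge form a ``diamond'' with $4$ vertices and $5$ edges, so with the other $k-2$ triangles disjoint we get $v = 3k-2$, $e = 5+3(k-2) = 3k-1$, and $\Theta(n^{3k-2})$ placements times $\Theta(d^{3k-1}/n^{3k-1}) = d^{3k-1}/n$ — still small. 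I need to recount: the point is that the leading correction of size $\binom k2 \mathbb{E}[X]^{k-1} \asymp d^{3k-3}$ must come from a configuration with $n^{v}(d/n)^e = d^{3k-3}$, i.e. $v - e$ fixed and $e = v + \text{something}$; solving $n^v d^e/n^e$ against $d^{3k-3}$ with the constraint that exactly two triangles are ``merged'' into a single triangle suggests the configuration where two of the $T_i$ are \emph{identical} (the same triangle): then they contribute $3$ vertices and $3$ edges jointly instead of $6$ and $6$, giving $v = 3(k-1)$, $e = 3(k-1)$, with $\binom k2$ choices of the pair, $\Theta(n^{3(k-1)})$ placements, probability $\Theta(d^{3(k-1)}/n^{3(k-1)})$, total $\Theta(d^{3(k-1)}) = \Theta(\mathbb{E}[X]^{k-1})$, and the constant works out to exactly $\binom k2$.

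So the structure of the proof is: (1) expand into the sum over tuples; (2) show vertex-disjoint tuples give $\mathbb{E}[X]^k$ up to $O(\mathbb{E}[X]^k/n) = o(\mathbb{E}[X]^{k-2})$; (3) show tuples with exactly one coincident pair (two equal triangles, rest disjoint) give $\binom k2\mathbb{E}[X]^{k-1}$ up to negligible error, using $\mathbb{E}[X^2]$-type estimates and again the $(1+O(1/n))$ from \Cref{thm:subgraph-prob}; (4) bound the total contribution of all remaining union-types (two triangles overlapping in an edge or a vertex without being equal, three or more triangles interacting, etc.) by $O(\mathbb{E}[X]^{k-2})$, which amounts to checking the finitely many isomorphism types $F$ of unions of at most $k\le 4$ triangles that are not a disjoint union of $k$ triangles and not ``$k-2$ disjoint triangles plus one doubled triangle'', and verifying $n^{v(F)}(d/n)^{e(F)} = O(d^{3k-6})$ for each, equivalently $v(F) - e(F) \le 3k - 6 - (e(F)-v(F))\cdot 0$... precisely $n^{v}d^{e-v}/n^{0}$ hmm — equivalently verifying $3(k-v(F)/?) $; concretely one checks the inequality $e(F) - v(F) \ge e_{\mathrm{disj}} - v_{\mathrm{disj}} + $ (deficiency) for each type, which is a finite case analysis since $k\le 4$. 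The main obstacle is precisely this step (4): enumerating and bounding all the ``intermediate'' union-types, and being careful that the seemingly dangerous configurations (one shared vertex, one shared edge) are in fact smaller than $\mathbb{E}[X]^{k-1}$ while the doubled-triangle configuration is exactly at that scale — the bookkeeping with the exponents of $n$ and $d$ is delicate but entirely elementary given \Cref{thm:subgraph-prob} and the estimates of \Cref{cla:estimate-1}.
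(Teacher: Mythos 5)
Your approach is sound and arrives at the right answer, but it is organised differently from the paper's. You expand $\mathbb{E}[X^k]$ directly over ordered $k$-tuples of (not necessarily distinct) triangles and locate the $\binom{k}{2}\mathbb{E}[X]^{k-1}$ correction in the ``two coincident indices'' configuration, i.e.\ exactly two of the $k$ positions hold the same triangle while the others are vertex-disjoint from everything. The paper instead works with the \emph{factorial} moments $\mathbb{E}[X_{(k)}]$, where by construction all indices are distinct, so the doubled-triangle configuration never arises: the only relevant dichotomy is whether the $k$ distinct triangles are pairwise vertex-disjoint (giving $\mathbb{E}[X]^k$ up to a $1+O(1/n)$ factor) or overlap (giving $O(\mathbb{E}[X]^{k-2})$). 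The paper then converts $\mathbb{E}[X_{(k)}]$ back to $\mathbb{E}[X^k]$ via the elementary identities $\mathbb{E}[X^3]=\mathbb{E}[X_{(3)}]+3\mathbb{E}[X^2]-2\mathbb{E}[X]$ and $\mathbb{E}[X^4]=\mathbb{E}[X_{(4)}]+6\mathbb{E}[X^3]-11\mathbb{E}[X^2]+6\mathbb{E}[X]$, feeding in the already-proved lower cases, and the $\binom{k}{2}$ coefficient then falls out of that algebra rather than from an explicit enumeration. What the paper's route buys is a simpler step (4): with distinct indices and the observation that every component of the union contains a cycle, one sees that ``at most as many edges as vertices'' forces each component to be unicyclic, hence the triangles are vertex-disjoint, and the remaining overlapping cases are bounded crudely by a single union bound over the edge count $h$. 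Your direct route is conceptually more transparent about where the correction comes from, but step (4) — bounding every non-dominant union-type, including the mixed configurations with repeated \emph{and} overlapping triangles — genuinely requires a longer case check. The arithmetic you would need (e.g.\ $n^{v}(d/n)^{e}=O(d^{3k-6})$ for all the intermediate types, using $d^{10}\le n$) does go through, as you indicated, but note you stop at outlining this check rather than performing it; that is the one substantive gap between your proposal and a complete proof.
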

\begin{proof}
For $k=2$, the proof follows directly from~\Cref{cla:estimate-1}.
Thus, we are left to handle the cases $k=3$ and $k=4$.
Set $N = \binom{n}{3}$ and define $H_1,\dots,H_N$ to be an arbitrary ordering of all triangles in $K_n$. 
Also, define $X_i$ to be the indicator random variable of the event $H_i\subseteq G(n,d)$, so we have~$X = \sum_{i=1}^N X_i$.

\vspace{1em}
\paragraph*{The case $k=3$.}
We will in fact show that $\mathbb{E}[X_{(3)}]=\mathbb{E}[X]^3+O(\mathbb{E}[X])$, where $X_{(3)}=X(X-1)(X-2)$ is the third factorial moment of $X$. As $\mathbb{E}[X_{(3)}]=\mathbb{E}[X^3]-3\mathbb{E}[X^2]+2\mathbb{E}[X]$, the result will follow by applying the lemma for $k=2$.
We now compute $\mathbb{E}[X_{(3)}] = \sum_{(i,j,k)\in \binom{[N]}{3}}\mathbb{E}[X_iX_jX_k]$ by distinguishing cases. 
Note that, by a simple application of the second part of \Cref{thm:subgraph-prob},
for all distinct $i,j,k\in [n]$, $R = H_i \cup H_j \cup H_k$, any subgraph $S$ of $R$ and for every edge $uv \in E(R) \setminus E(S)$,
\begin{align}
\label{eq:Proba-Single-Edge-Added}
 \Prob(uv\in G(n,d)\mid S\subseteq G(n,d)) =  \frac{(d-\deg_S(u))(d-\deg_S(v))}{dn}\left(1 +O\left( \frac{1}{n} \right)\right).
\end{align}

We write $\mathbb{E}[X_{(3)}] =\Sigma_1 + \Sigma_2$ where $\Sigma_1$ is the contribution to $\mathbb E[X_{(3)}]$ by the triplets $(H_i, H_j, H_k)$ whose union contains more edges than vertices, and $\Sigma_2$ is the contribution to $\mathbb E[X_{(3)}]$ by the triplets $(H_i, H_j, H_k)$ whose union contains at most as many edges as vertices.
We start with the following claim.
\begin{Claim}
\label{Cl:Sigma1}
$\Sigma_1 = O(\mathbb E[X])$.
\end{Claim}
\begin{proof}
Fix a triplet $(H_i, H_j, H_k)$ whose union $R=H_i\cup H_j\cup H_k$ contains $h = |E(R)|\in [5,9]$ edges and at most $h-1$ vertices.
Consider an arbitrary ordering $e_1, \dots, e_h$ of the edges of $R$, and denote by $R_i$ the graph induced by $e_1, \dots, e_i$.
By \eqref{eq:Proba-Single-Edge-Added}, for every $i\in [h]$,
    \begin{align}
    \label{eq:Proba-Single-Edge-Added-Case1}
        \Prob(e_i \subseteq G(n,d)\mid R_{i-1}\subseteq G(n,d)) =  \left(1 +O\left( \frac{1}{n} \right)\right)\frac{(d-\deg_{R_{i-1}}(u))(d-\deg_{R_{i-1}}(v))}{dn} = O \left(\frac{d}{n} \right).
    \end{align}
    Therefore, by multiplying the probabilities in~\eqref{eq:Proba-Single-Edge-Added-Case1} for every $i\in [h]$, we obtain that
    \begin{align}
    \label{eq:Proba-Subgraph-Case1}
        \Prob(R \subseteq G(n,d)) = O\bigg(\bigg(\frac{d}{n}\bigg)^{h}\bigg).
    \end{align}
By combining~\eqref{eq:Proba-Subgraph-Case1}, a simple union bound over the values $h\in [5,9]$, the inequality $d^{10}\le n$ and the second point of \Cref{cla:estimate-1}, we obtain that
    \begin{align*}
        \Sigma_1 = \sum_{h=5}^9  O\bigg(n^{h-1} \bigg(\frac{d}{n}\bigg)^{h}\bigg) = O\bigg(\frac{d^{9}}{n}\bigg) = O(d^3) = O(\mathbb E[X]),
    \end{align*}
    as desired.
\end{proof}

We now estimate $\Sigma_2$, that is, the contribution of the triplets $(H_i, H_j, H_k)$ such that their union $R$ satisfies $|E(R)|\le |V(R)|$.
In this case, observe that actually $|E(R)| = |V(R)|$ since every connected component of $R$ contains a cycle, and therefore has at least as many edges as vertices.
Moreover, every connected component of $R$ contains a single cycle, implying that different triangles in the triplet $(H_i, H_j, H_k)$ must be (vertex-)disjoint.

Consider an arbitrary ordering $e_1, \dots, e_9$ of the edges of $R$, and denote by $R_i$ the graph induced by $e_1, \dots, e_i$ for all $i\in [9]$. By \eqref{eq:Proba-Single-Edge-Added}, we have
\begin{align*}
    \Prob(e_i \subseteq G(n,d)\mid R_{i-1}\subseteq G(n,d)) =  \left(1 +O\left( \frac{1}{n} \right)\right)\frac{(d-\deg_{R_{i-1}}(u))(d-\deg_{R_{i-1}}(v))}{dn}.
\end{align*}
Without loss of generality, assume each of $\{e_1,e_2,e_3\}$, $\{e_4,e_5,e_6\}$ and $\{e_7,e_8,e_9\}$ form a triangle. 
By multiplying the said probabilities, we obtain
\begin{align*}
    \Prob(R \subseteq G(n,d)) = \left(1+O\left(\frac{1}{n}\right)\right)^9\bigg(\frac{d^2}{dn}\cdot \frac{d(d-1)}{dn}\cdot \frac{(d-1)^2}{dn}\bigg)^3 = \left(1+O\left(\frac{1}{n}\right)\right) \frac{(d-1)^9}{n^9}.
\end{align*}
By summing over all triplets $(H_i,H_j,H_k)$ giving rise to $R$ as described above and using that $d^{10}\le n$, it follows that
{\begin{align}
\label{eq:Sigma2c}
\hspace{-0.7em}\Sigma_{2} = \binom{n}{3}\binom{n-3}{3}\binom{n-6}{3}\cdot \left(1+O\left(\frac{1}{n}\right)\right)\frac{(d-1)^9}{n^9}  = \left(1+O\left(\frac{1}{n}\right)\right)\frac{(d-1)^9}{216} = \mathbb E[X]^3 + O(\mathbb E[X]).
\end{align}
Thus, by combining \Cref{Cl:Sigma1} and \eqref{eq:Sigma2c}, we obtain that $\mathbb{E}[X_{(3)}]=\mathbb E[X]^3 + O(\mathbb E[X])$, as desired.

\vspace{1em}
\paragraph*{The case $k=4$.}
Analogously to the previous case, we show that $\mathbb{E}[X_{(4)}]=\mathbb{E}[X]^4+O(\mathbb{E}[X]^2)$, where $X_{(4)}=X(X-1)(X-2)(X-3)$. As $\mathbb{E}[X_{(4)}]=\mathbb{E}[X^4]-6\mathbb{E}[X^3]+11\mathbb{E}[X^2]-6\mathbb{E}[X]$, the result will follow from the cases $k=2$ and $k=3$ of the current lemma.
Having $\mathbb{E}[X_{(4)}] = \sum_{(i,j,k,\ell)\in \binom{[N]}{4}}\mathbb{E}[X_iX_jX_kX_{\ell}]$, we again write $\mathbb{E}[X_{(4)}] =\Sigma_1 + \Sigma_2$, where $\Sigma_1$ is the contribution to $\mathbb E[X_{(4)}]$ of the tuples $(H_i, H_j, H_k, H_{\ell})$ whose union contains more edges than vertices, and $\Sigma_2$ is the contribution to $\mathbb E[X_{(4)}]$ of the tuples $(H_i, H_j, H_k, H_{\ell})$ whose union contains at most as many edges as vertices. 
We start with a claim analogous to \Cref{Cl:Sigma1} for the case $k=4$.
\begin{Claim}
\label{Cl:Sigma1-k=4}
$\Sigma_1 = O(\mathbb E[X]^2)$.
\end{Claim}
\begin{proof}
Similarly to the proof of \Cref{Cl:Sigma1}, fix a tuple $(H_i, H_j, H_k, H_\ell)$ whose union $R=H_i\cup H_j\cup H_k\cup H_\ell$ contains $h = |E(R)|\in [5,12]$ edges and at most $h-1$ vertices.
Similarly to \eqref{eq:Proba-Subgraph-Case1}, we obtain
    \begin{align}
    \label{eq:Proba-Subgraph-Case-k=4}
        \Prob(R \subseteq G(n,d)) = O\bigg(\bigg(\frac{d}{n}\bigg)^h\bigg).
    \end{align}
By combining~\eqref{eq:Proba-Subgraph-Case-k=4}, a simple union bound over the values $h\in [5,12]$, the inequality $d^{10}\le n$ and the second point of \Cref{cla:estimate-1}, we obtain that
    \begin{align*}
        \Sigma_1 =\sum_{h=5}^{12}  O\bigg(n^{h-1} \bigg(\frac{d}{n}\bigg)^h\bigg) = O\left(\frac{d^{12}}{n}\right) = O(d^6) = O(\mathbb E[X]^2),
    \end{align*}
    as desired.
\end{proof}
We now estimate $\Sigma_2$.
As in the case $k=3$, all tuples $(H_i, H_j, H_k, H_{\ell})$ with union $R$ contributing to $\Sigma_2$ satisfy that every connected component of $R$ contains a single cycle and, therefore, any two triangles must be (vertex-)disjoint. 
Similarly to \eqref{eq:Sigma2c}, we obtain
\begin{align}
\label{eq:Sigma2-k=4}
    \Sigma_{2} &= \binom{n}{3}\binom{n-3}{3}\binom{n-6}{3}\binom{n-9}{3}\cdot \left(1+O\left(\frac{1}{n}\right)\right)\frac{(d-1)^{12}}{n^{12}} = \mathbb E[X]^4+O(\mathbb E[X]^2).
\end{align}
Thus by \Cref{Cl:Sigma1-k=4} and \eqref{eq:Sigma2-k=4} we obtain that $\mathbb{E}[X_{(4)}] = \mathbb{E}[X]^4 + O(\mathbb{E}[X]^2)$, completing the proof of \Cref{lem:estimate-2}.}
\end{proof}

Finally, we estimate $\mathbb{E}[(Y-Y^*)^2]$.
Our proof closely follows the proof of Claim 12 in Gao's work~\cite{Gao23}, but we use \Cref{lem:estimate-2} to obtain a more precise estimate.

\begin{proposition}
\label{prop:Estimate-E[(Y-Y*)^2]}
Consider $d = d(n)\in [3,n^{1/10}]$. Then, $\mathrm{Var}[Y-Y^*] = O(\mathbb{E}[Y]^2/d^4)$.
\end{proposition}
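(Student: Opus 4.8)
The plan is to combine the orthogonal‑projection identity with the estimates already recorded for $\mathrm{Var}[Y]$ and $\mathrm{Var}[X]$, so that the only genuinely missing ingredient is a sufficiently precise value of $\mathrm{Cov}(X,Y)$. Since $\mathbb{E}[Y^*]=\mathbb{E}[Y]$ we have $\mathbb{E}[Y-Y^*]=0$, hence $\mathrm{Var}[Y-Y^*]=\mathbb{E}[(Y-Y^*)^2]$; and writing $Y-Y^*=(Y-\mathbb{E}[Y])-a(X-\mathbb{E}[X])$ with $a=\mathrm{Cov}(X,Y)/\mathrm{Var}[X]$ and expanding yields the standard identity
\[
\mathbb{E}\big[(Y-Y^*)^2\big]\;=\;\mathrm{Var}[Y]\;-\;\frac{\mathrm{Cov}(X,Y)^2}{\mathrm{Var}[X]}.
\]
Thus it suffices to estimate the three right‑hand quantities precisely enough to exhibit a cancellation down to order $\mathbb{E}[Y]^2/d^4$.

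Two of them are immediate. By \Cref{cla:estimate-1}(4), and because for $d\le n^{1/10}$ one has $d^{-3}/n=O(d^{-13})$ and $\sqrt{d/n}\,(\log n)^3=o(d^{-4})$ (as $d^{4.5}(\log n)^3 n^{-1/2}\le(\log n)^3 n^{-1/20}\to 0$), the error terms there collapse and $\mathrm{Var}[Y]=\big(\tfrac{1}{6d^3}+O(d^{-4})\big)\mathbb{E}[Y]^2$. By \Cref{cla:estimate-1}(2),(3) — equivalently, the $k=2$ case of \Cref{lem:estimate-2} together with the estimate for $\mathbb{E}[X]$ — we get $\mathrm{Var}[X]=\tfrac{(d-1)^3}{6}\big(1+O(d^{-3})\big)$. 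Substituting into the identity, the proposition becomes equivalent to the statement $\mathrm{Cov}(X,Y)^2=\tfrac{1}{36}\mathbb{E}[Y]^2\big(1+O(1/d)\big)$; that is, we must determine $\mathrm{Cov}(X,Y)$ up to relative error $O(1/d)$.

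This is the crux, and here the argument follows the proof of Claim~12 in~\cite{Gao23}, with \Cref{lem:estimate-2} used to tighten the error bookkeeping. Write $X=\sum_T\mathbf{1}[T\subseteq G(n,d)]$ and $Y=\sum_M\mathbf{1}[M\subseteq G(n,d)]$, where $T$ ranges over the triangles and $M$ over the perfect matchings of $K_n$. By vertex‑transitivity, $\mathbb{E}[X\mid M\subseteq G(n,d)]$ is independent of $M$, so $\mathbb{E}[XY]=\mathbb{E}[Y]\cdot\mathbb{E}[X\mid M_0\subseteq G(n,d)]$ for any fixed perfect matching $M_0$, and therefore
\[
\mathrm{Cov}(X,Y)\;=\;\mathbb{E}[Y]\sum_{T}\bigg(\frac{\Prob\big(M_0\cup T\subseteq G(n,d)\big)}{\Prob\big(M_0\subseteq G(n,d)\big)}\;-\;\Prob\big(T\subseteq G(n,d)\big)\bigg).
\]
Removing the forced edges by the obvious bijections turns each probability here into a ratio of numbers of graphs with a prescribed near‑regular degree sequence avoiding a fixed sparse graph, which \Cref{thm:McKay} evaluates explicitly. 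Since the relevant forbidden graphs ($M_0$, $M_0\cup T$, $T$) have maximum degree at most $3$, one has $\hat\Delta=O(d^2)$ and $\hat\Delta^2/e(G)=O(d^3/n)=O(d^{-7})$ for $d\le n^{1/10}$, so the error factors in \Cref{thm:McKay} are $1+O(d^{-7})$. Substituting these into the displayed sum, Taylor‑expanding the resulting explicit expressions in $1/d$ and $1/n$, splitting the sum according to whether $T$ shares $0$ or $1$ edges with $M_0$ (the only possibilities, as $M_0$ is a matching), and summing, the contributions of orders $d^3$, $d^2$ and $d$ cancel, leaving $\mathrm{Cov}(X,Y)=-\tfrac16\mathbb{E}[Y]\big(1+O(1/d)\big)$, whence $\mathrm{Cov}(X,Y)^2=\tfrac{1}{36}\mathbb{E}[Y]^2\big(1+O(1/d)\big)$.

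Putting the three estimates together gives $\mathrm{Cov}(X,Y)^2/\mathrm{Var}[X]=\tfrac{\mathbb{E}[Y]^2}{6(d-1)^3}\big(1+O(1/d)\big)=\tfrac{\mathbb{E}[Y]^2}{6d^3}+O(\mathbb{E}[Y]^2/d^4)$, which cancels the leading term of $\mathrm{Var}[Y]$ and leaves $\mathrm{Var}[Y-Y^*]=O(\mathbb{E}[Y]^2/d^4)$, as claimed. The main obstacle should be the covariance estimate of the previous paragraph: both $\mathbb{E}[X\mid M_0\subseteq G(n,d)]$ and $\mathbb{E}[X]$ are of order $d^3$, while the difference that controls $\mathrm{Cov}(X,Y)$ is only of order $1$, so one must compute both with enough precision that the order‑$d^3$, order‑$d^2$ and order‑$d$ parts cancel identically; any coarser estimate — one losing a multiplicative $1+o(1)$ on either $\Theta(d^3)$ quantity — would swamp the answer. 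Carrying out this cancellation cleanly, with the error sharpened where necessary via \Cref{lem:estimate-2}, is the technical heart of the proof.
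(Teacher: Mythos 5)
Your high-level plan coincides with the paper's: the orthogonal-projection identity $\mathrm{Var}[Y-Y^*]=\mathrm{Var}[Y]-\mathrm{Cov}(X,Y)^2/\mathrm{Var}[X]$, the estimate $\mathrm{Var}[Y]=\mathbb{E}[Y]^2/(6d^3)+O(\mathbb{E}[Y]^2/d^4)$ from \Cref{cla:estimate-1}(4), the estimate $\mathrm{Var}[X]=\mathbb{E}[X]+O(1)$ from the $k=2$ case of \Cref{lem:estimate-2}, and the observation that everything hinges on knowing $\mathrm{Cov}(X,Y)$ up to relative error $O(1/d)$. You are also right that \Cref{cla:estimate-1}(1) as stated (an $O(\cdot)$ bound) is not by itself strong enough for this cancellation. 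What the paper actually does at that point is quietly invoke the sharper estimate $\mathrm{Cov}(X,Y)=\bigl(d^{-3}+O(d^{-4}+d/n)\bigr)\mathbb{E}[X]\mathbb{E}[Y]$ — this is the full content of \cite[Theorem~10]{Gao23}, of which the stated item~(1) is a weakened restatement — and then the rest of the proof is a three-line substitution. So your correct diagnosis ("we must determine $\mathrm{Cov}(X,Y)$ up to relative error $O(1/d)$") is resolved in the paper by citation, not by a new computation, and \Cref{lem:estimate-2} plays no role in obtaining $\mathrm{Cov}(X,Y)$; its only role in this proposition is the $k=2$ case giving $\mathrm{Var}[X]=\mathbb{E}[X]+O(1)$.

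Where you diverge is in proposing to re-derive the sharp covariance estimate from scratch via \Cref{thm:McKay} and a case analysis over $|E(T\cap M_0)|\in\{0,1\}$. That is a legitimate alternative route — it is essentially the content of Gao's own Claim~12 — but you have not carried it out: the claim that ``the contributions of orders $d^3$, $d^2$ and $d$ cancel'' is exactly the delicate part, and without the actual expansion of McKay's $\exp(-\lambda-\lambda^2-\mu-O(\hat\Delta^2/e(G)))$ factors to the required precision, this is an assertion rather than a proof. Your own closing paragraph acknowledges this gap, which is good self-awareness; as written, though, the key step of the proposal is incomplete. One further small slip in the sketch: you assert $\mathrm{Cov}(X,Y)=-\tfrac16\mathbb{E}[Y]\bigl(1+O(1/d)\bigr)$, but the sign is positive — from $\mathrm{Cov}(X,Y)\sim d^{-3}\mathbb{E}[X]\mathbb{E}[Y]\sim\tfrac16\mathbb{E}[Y]$. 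This does not affect your conclusion since only $\mathrm{Cov}(X,Y)^2$ enters, but it suggests the sketched cancellation was not actually checked.
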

\begin{proof}
By definition of $Y^* = aX+b$,
\[\mathrm{Cov}(Y^{*},Y^{*}-Y) = \mathrm{Var}[Y^*] -\mathrm{Cov}(Y^{*},Y) = a^2\mathrm{Var}[X] - a\mathrm{Cov}(X,Y) =0.\] 
Moreover, by the above equality and the fact that $\mathbb{E}[Y]=\mathbb{E}[Y^{*}]$, we obtain that 
\begin{equation}\label{eq:Ys}
\begin{split}
\mathrm{Var}[Y-Y^*] = \mathrm{Cov}(Y,Y-Y^*) 
&= (\mathbb E[Y^2] - \mathbb E[(Y^*)^2]) + (\mathbb E[(Y^*)^2] - \mathbb E[YY^*])\\
&=(\mathrm{Var}[Y] - \mathrm{Var}[Y^*]) + \mathrm{Cov}(Y^*,Y^*-Y) \\
&= \mathrm{Var}[Y]-\mathrm{Var}[Y^*].
\end{split}
\end{equation}
On the one hand, by the fourth point in~\Cref{cla:estimate-1}
and the fact that $d^{10}\le n$, we deduce that
\begin{align}
\label{eq:VarY}
    \mathrm{Var}[Y] = \left( \frac{1}{6d^{3}} + O \left(\frac{1}{d^{4}} + \frac{d^3}{n} + \sqrt{\frac{d}{n}} (\log n)^3 \right)  \right)\mathbb{E}[Y]^2 = \frac{\mathbb{E}[Y]^2}{6d^{3}} + O\left(\frac{\mathbb{E}[Y]^2}{d^{4}}\right).
\end{align}
On the other hand, by the first point in~\Cref{cla:estimate-1}
and \Cref{lem:estimate-2} for $k=2$, we have 
\begin{align*}
     \mathrm{Var}[Y^*] = \frac{\mathrm{Cov}(X,Y)^2}{\mathrm{Var}[X]} &= \frac{(d^{-3}+O(d^{-4}+d/n))^2(\mathbb{E}[X]\mathbb{E}[Y])^2}{\mathbb{E}[X]+O(1)}.
\end{align*}
Lastly, using the second point in \Cref{cla:estimate-1} and the fact that $d^{10}\le n$, we obtain that
\begin{align}
\label{eq:VarY*}
    \mathrm{Var}[Y^*]= \frac{\mathbb{E}[Y]^2}{6d^3} + O\left(\frac{\mathbb{E}[Y]^2}{d^4}\right).
\end{align}
Therefore, by combining \eqref{eq:Ys}, \eqref{eq:VarY} and \eqref{eq:VarY*}, it follows that
\begin{align*}
\mathrm{Var}[Y-Y^*] = \mathrm{Var}[Y]-\mathrm{Var}[Y^*] =O(\mathbb{E}[Y]^2/d^4),
\end{align*}
as desired.
\end{proof}

We are ready to finish the proof of \Cref{prop:PM-counting}.

\begin{proof}[Proof of \Cref{prop:PM-counting}]
First, note that
\begin{equation}\label{eqn:decompose}
\mathbb P(|Y-\mathbb E[Y]|\ge d^{-1.1} \mathbb E[Y])\le \mathbb P(|Y-Y^*|\ge d^{-1.1} \mathbb E[Y]/2) + \mathbb P(|Y^*-\mathbb E[Y^*]|\ge d^{-1.1} \mathbb E[Y^*]/2).
\end{equation}
For the first term, Chebyshev's inequality and \Cref{prop:Estimate-E[(Y-Y*)^2]} implies that
\begin{equation}\label{eqn:decompose1}
\mathbb P(|Y-Y^*|\ge d^{-1.1} \mathbb E[Y]/2)\le \frac{\mathrm{Var}[Y-Y^*]}{d^{-2.2} \mathbb E[Y]^2/4} = O(d^{-1.8}).
\end{equation}
To bound the second term, we express $\mathbb E[(Y^*-\mathbb E[Y^*])^4]$ as a function of the moments of $X$. We have that
\begin{align*}
    a^{-4} \mathbb E[(Y^*-\mathbb E[Y^*])^4] 
&= \mathbb E[(X-\mathbb E[X])^4] = \mathbb E[X^4]-4\mathbb E[X^3]\mathbb E[X]+6\mathbb E[X^2]\mathbb E[X]^2-3\mathbb E[X]^4.
\end{align*}
By applying \Cref{lem:estimate-2} for each $k\in \{2,3,4\}$, we obtain that
\begin{align*}
    a^{-4} \mathbb E[(Y^*-\mathbb E[Y^*])^4] = \binom{4}{2}\mathbb E[X]^3 - 4\binom{3}{2}\mathbb E[X]^3 + 6 \binom{2}{2}\mathbb E[X]^3 + O(\mathbb E[X]^2) = O(\mathbb E[X]^2).
\end{align*}
Combining this with the definition of $a$, \Cref{cla:estimate-1}, the fact that $\mathbb E[X] = \Theta(\mathrm{Var}[X])=\Theta(d^3)$ from the $k=2$ case of \Cref{lem:estimate-2}, and Markov's inequality yields
\begin{align*}
\mathbb P(|Y^*-\mathbb E[Y^*]|\ge d^{-1.1}\mathbb E[Y]/2)
&= \mathbb P(|Y^*-\mathbb E[Y^*]|^4\ge d^{-4.4}\mathbb E[Y]^4/16)\le \frac{\mathbb E[(Y^*-\mathbb E[Y^*])^4]}{d^{-4.4}\mathbb E[Y]^4/16}\\
&= O\left(\frac{a^4\mathbb E[X]^2}{d^{-4.4}\mathbb E[Y]^4}\right) = O\left(\frac{\mathbb E[X]^2}{d^{12-4.4}}\right) = O(d^{-1.6}).
\end{align*}
Together with~\eqref{eqn:decompose} and~\eqref{eqn:decompose1}, this finishes the proof.
\end{proof}

\section{\texorpdfstring{Proofs of \Cref{thm:Coupling-Sprinkling-2-Random,thm:Contiguity-Disjoint-Matchings}}{Proofs of Theorem \ref{thm:Coupling-Sprinkling-2-Random} and \ref{thm:Contiguity-Disjoint-Matchings}}}\label{sec:proofs45}

With \Cref{cor:dTV} in hand, the proofs of \Cref{thm:Coupling-Sprinkling-2-Random,thm:Contiguity-Disjoint-Matchings} are relatively easy.
We start with a proof of \Cref{thm:Coupling-Sprinkling-2-Random}.

\begin{proof}[Proof of \Cref{thm:Coupling-Sprinkling-2-Random}.]
We assume without loss of generality that $d_1\le d_2-d_1$ and $d_1'\le d_2-d_1'$; in particular, $d_1+d_1'\le d_2$.
For $r,s$ with $r+s\le d_2$, define $\eta_{r,s} = \mu_r \oplus \mu_s \oplus \mu_1 \oplus \dotsb \oplus \mu_1$ where $\mu_1$ is repeated $d_2-r-s$ times. By the triangle inequality, we have 
\begin{align}
\label{eq:Triangle-ineq-Sprinkling-2-Random}
\dtv(\mu_{d_1} \oplus \mu_{d_2-d_1},\mu_{d'_1} \oplus \mu_{d_2-d_1'}) \leq \dtv(\mu_{d_1} \oplus \mu_{d_2-d_1},\eta_{d_1,d_1'})+ \dtv(\eta_{d_1,d_1'},\mu_{d'_1} \oplus \mu_{d_2-d_1'}).
\end{align}
Since the two terms are treated similarly, we focus on bounding the first one.
Again, by the triangle inequality,
\begin{align}
\label{eq:Triangle-ineq-Sprinkling-2-Random-2}
\dtv(\mu_{d_1} \oplus \mu_{d_2-d_1},\eta_{d_1,d_1'}) = \dtv(\eta_{d_1,d_2-d_1},\eta_{d_1,d_1'}) \leq \sum_{j=d_1'}^{d_2-d_1-1} \dtv(\eta_{d_1,j+1}, \eta_{d_1,j}).
\end{align}
By combining \Cref{cor:cont_Gnd}(a) and \Cref{cor:dTV}, we have $\dtv(\eta_{d_1,j+1}, \eta_{d_1,j}) = O(j^{-1.1})$ hold for every $j \in [d_2]$, with an absolute constant hidden in the $O(\cdot)$ notation. 
By plugging this bound into \eqref{eq:Triangle-ineq-Sprinkling-2-Random-2} and using that $d_1'=\omega(1)$, we obtain that
\begin{align*}
\dtv(\mu_{d_1} \oplus \mu_{d_2-d_1},\eta_{d_1,d_1'})=O\left(\sum_{j=d_1'}^{d_2-d_1-1} j^{-1.1}\right) = O((d_1')^{-0.1}) = o(1).
\end{align*}
The second term in \eqref{eq:Triangle-ineq-Sprinkling-2-Random} is bounded from above in a similar way, which leads to the desired result.
\end{proof}

Next, we prove \Cref{thm:Contiguity-Disjoint-Matchings}.

\begin{proof}[Proof of \Cref{thm:Contiguity-Disjoint-Matchings}.]
For every $i\in [n-1]$, recall the probability distribution $\nu_i = \mu_1\oplus\dotsb\oplus \mu_1$ repeated $i$ times and define $\lambda_i = \mu_i\oplus \nu_{d-i}$.
Fix a sequence $(A_n)_{n\in 2\mathbb N}$ with $A_n\subseteq \cG_d(n)$ for all $n\in 2\mathbb N$ and $\nu_d(A_n)\to 0$.
For every $\eps > 0$, fix $d_1 = d_1(\eps)\ge 2$ with the property that $\sum_{i=d_1}^{\infty} i^{-1.1} < \eps$. 
Again, by combining the triangle inequality, \Cref{cor:cont_Gnd}(a) and \Cref{cor:dTV}, we obtain
\begin{align}
\label{eq:Bound-Diff-Measure-An}
|\lambda_{d_1}(A_n) - \mu_d(A_n)| \leq \dtv(\lambda_{d_1}, \mu_d) \le \sum_{i=d_1}^{d-1} \dtv(\lambda_i, \lambda_{i+1}) = O\bigg(\sum_{i=d_1}^{d-1} \dtv(\mu_i\oplus \mu_1, \mu_{i+1})\bigg) = O(\eps)
\end{align}
with some absolute constant hidden in the $O(\cdot)$ term.
Moreover, by combining \Cref{thm:const} and \Cref{cor:cont_Gnd}(b), we have that $\lambda_0=\nu_d$ are $\lambda_{d_1}$ are contiguous.
Therefore, $\lambda_{d_1}(A_n) \rightarrow 0$.
Plugging this into \eqref{eq:Bound-Diff-Measure-An} implies that $\limsup_{n\to \infty} \mu_d(A_n) = O(\eps)$. 
Since the latter holds for every $\eps > 0$,
this implies that $\mu_d(A_n)\to 0$, as desired.
\end{proof}

\section{\texorpdfstring{Coupling lemmas: prelude to the proof of \Cref{thm:Coupling-Inclusion}}{Coupling lemmas: prelude to the proof of Theorem \ref{thm:Coupling-Inclusion}}}\label{sec:coupling}

In this section, we present several technical lemmas in preparation for the proof of \Cref{thm:Coupling-Inclusion}.

To begin with, we define the \emph{isomorphism class} of a graph $G$ to be the set of graphs $H$ for which $V(G) = V(H)$ for which there exists a bijective map $\sigma: V(G)\to V(H)$ such that $\sigma(G) = H$, that is, for every pair of distinct vertices $u,v\in V(G)$, $uv\in E(G)$ if and only if $\sigma(u)\sigma(v)\in E(H)$.
For fixed $n\ge 1$ and $d\in [n-1]$, we denote by $\cC_d = \cC_d(n)$ the family of isomorphism classes in $\cG_d(n)$; in particular, $\cC_d$ is a partition of the set of $d$-regular graphs on $n$ vertices.
We define the probability distributions $\bar\mu_d = \bar\mu_{d,n}$ and $\bar\nu_d = \bar\nu_{d,n}$ on the space $\cC_d$ by setting,
for each $A\subseteq \mathcal C_d$,
\[\bar\mu_{d,n}(A) = \sum_{S\in A}\sum_{G\in S} \mu_{d,n}(G)\qquad \text{and}\qquad \bar\nu_{d,n}(A) = \sum_{S\in A}\sum_{G\in S} \nu_{d,n}(G).\]
Note that for every class $S\in \cC_d$ and for every graph $G\in S$, we have $\mu_d(G) |S| = \bar\mu_d(S)$ since $\mu_d(G) = 1/|\mathcal G_d(n)|$ for all $G\in \mathcal G_d(n)$. Moreover, the same holds for $\nu_d$, that is, $\nu_d(G) |S| = \bar\nu_d(S)$: indeed, every two graphs in the same isomorphism class have the same number of 1-factorisations. 

Next, we fix an integer $d\ge 1$, define a procedure for sampling a union of $k d$ disjoint perfect matchings on $n$ vertices based on the isomorphism class structure of $\cG_d(n)$, and show that it recovers the distribution~$\nu_{k d,n}$.

\begin{definition}[Alternative Sampling Procedure (ASP)]\label{def:procedure-sample}
Fix a sequence of independent and identically distributed random variables $(Y_i)_{i=1}^{\infty}$ on the space $\cC_d(n)$ with distribution $\bar\nu_{d,n}$.
For every $k\ge 1$, conditionally on a realisation of $Y_1,\ldots,Y_k$, we define the random graph $H_1\oplus \dotsb \oplus H_k$ to be a random element of the set $Y_1\oplus \dotsb\oplus Y_k$ where, for every $i\in [k]$, $H_i$ is sampled according to the uniform distribution on the isomorphism class $Y_i$ conditionally on $H_1,\ldots,H_k$ being edge-disjoint.
We denote the distribution of $H_1\oplus \dotsb \oplus H_k$ by $\eta_{k,d,n}$ or simply $\eta_{k,d}$.
\end{definition}

\begin{lemma}\label{lem:coincide}
For every $k,d\ge 1$ and every graph $G\in \cG_{kd}$, $\eta_{k,d}(G) = (1+o(1))\nu_{kd}(G)$ where the $o(1)$ is uniform over all choices of $G\in \cG_{kd}$.
\end{lemma}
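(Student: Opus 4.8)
The plan is to write $\eta_{k,d}(G)$ in closed form by conditioning on the isomorphism classes $Y_1,\dots,Y_k$ sampled in the ASP, to recognise the outcome as $\nu_{kd}(G)$ up to a factor independent of $G$, and then to use that $\eta_{k,d}$ and $\nu_{kd}$ are probability measures to force that factor to be $1+o(1)$. The case $k=1$ is trivial, since the ASP then simply outputs a $\bar\nu_d$-random class given a uniformly random labelling, so $\eta_{1,d}=\nu_d=\nu_{kd}$; thus I would assume $k\ge 2$. Fix $G\in\cG_{kd}$; for a $k$-tuple $\vec S=(S_1,\dots,S_k)$ of classes in $\cC_d(n)$, let $M(\vec S)$ be the number of ordered pairwise edge-disjoint tuples $(H_1,\dots,H_k)$ with $H_i\in S_i$, and $N(G,\vec S)$ the number of those that additionally satisfy $H_1\cup\dots\cup H_k=G$. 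Directly from \Cref{def:procedure-sample},
\[
\eta_{k,d}(G)=\sum_{\vec S}\Bigl(\prod_{i=1}^k\bar\nu_d(S_i)\Bigr)\frac{N(G,\vec S)}{M(\vec S)}.
\]

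The crux is to estimate $M(\vec S)$. Sampling each $H_i$ uniformly from $S_i$ independently is the same as giving fixed skeletons of $S_1,\dots,S_k$ independent uniformly random bijective labellings, so $M(\vec S)/\prod_{i=1}^k|S_i|$ is exactly the edge-disjointness probability appearing in \Cref{lem:disjoint}. Applying that lemma with $m=k$ and all degrees equal to $d$ gives a positive constant $c=c_{k,d}=\exp(-\tbinom{k}{2}d^2/2)$ with $M(\vec S)=(c+o(1))\prod_{i=1}^k|S_i|$, where the $o(1)$ is \emph{uniform over all $\vec S$}; this uniformity is the feature I would be most careful about, since it is what lets the error survive the summation over $\vec S$ (it also shows $M(\vec S)>0$ for all $\vec S$ once $n$ is large, so the ratios above are well defined).

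Next I would substitute $\bar\nu_d(S_i)=|S_i|\,\nu_d(H)$, valid for every $H\in S_i$ (recalled just before \Cref{def:procedure-sample}); the factors $\prod_i|S_i|$ cancel, and regrouping the sum by the tuple $(H_1,\dots,H_k)$ rather than by $\vec S$ — each pairwise edge-disjoint tuple of $d$-regular graphs lies in a unique $\vec S$, and $\nu_d$ is constant on each class — yields
\[
\eta_{k,d}(G)=\frac{1+o(1)}{c}\sum_{\substack{(H_1,\dots,H_k)\ \text{pairwise edge-disjoint}\\ H_1\cup\dots\cup H_k=G}}\ \prod_{i=1}^k\nu_d(H_i),
\]
with the $o(1)$ uniform over $G$. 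By \Cref{lem:assoc} and the remark following it, $\nu_{kd}=\nu_d\oplus\dots\oplus\nu_d$ ($k$ copies), so the last sum equals $W\,\nu_{kd}(G)$, where $W>0$ is the normalising constant of that $\oplus$-product and does not depend on $G$. Hence $\eta_{k,d}(G)=\frac{(1+o(1))W}{c}\,\nu_{kd}(G)$ uniformly in $G$.

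It remains to eliminate the unknown constant. Writing $\eta_{k,d}(G)=\frac{W}{c}(1+\rho_n(G))\,\nu_{kd}(G)$ with $\sup_{G\in\cG_{kd}}|\rho_n(G)|=o(1)$ and summing over all $G\in\cG_{kd}$, the fact that $\eta_{k,d}$ and $\nu_{kd}$ are probability distributions gives $1=\frac{W}{c}\bigl(1+\sum_G\rho_n(G)\nu_{kd}(G)\bigr)$, and $\bigl|\sum_G\rho_n(G)\nu_{kd}(G)\bigr|\le\sup_G|\rho_n(G)|=o(1)$, so $W/c=1+o(1)$. Together with the previous display this gives $\eta_{k,d}(G)=(1+o(1))\nu_{kd}(G)$ uniformly over $G\in\cG_{kd}$, as desired. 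The main obstacle is precisely the uniformity of the error term from \Cref{lem:disjoint} over the tuples $\vec S$, together with this final normalisation step that removes the a priori unknown factor $W/c$; the rest is bookkeeping relating the $\nu_d$-weights to the $\oplus$-operation.
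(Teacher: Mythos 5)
Your proof is correct and follows essentially the same route as the paper: both condition on the isomorphism classes $Y_1,\dots,Y_k$, invoke \Cref{lem:disjoint} to estimate the edge-disjointness probability $M(\vec S)/\prod_i|S_i|$ uniformly over class tuples, substitute $\bar\nu_d(S_i)=|S_i|\,\nu_d(H)$ to recognise the sum as proportional to $\nu_{kd}(G)$, and then use that both $\eta_{k,d}$ and $\nu_{kd}$ are probability measures to pin the constant to $1+o(1)$. You are somewhat more explicit than the paper in isolating the trivial $k=1$ case (where \Cref{lem:disjoint} formally requires $m\ge 2$), in noting that $M(\vec S)>0$ so the ratios are well defined, and in spelling out the final normalisation step that upgrades ``proportional up to $1+o(1)$'' to ``equal up to $1+o(1)$''.
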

\begin{proof}
Fix $k,d\ge 1$ and recall that, by \Cref{lem:disjoint}, for any choice of $k$ (not necessarily distinct) classes $A_1,\ldots,A_k\in \cC_d$, the proportion of the $k$-tuples of graphs in $A_1\times A_2\times \ldots\times A_k$ such that every two graphs are edge-disjoint is $\e^{-D/2}+o(1)$ where $D = \tbinom{k}{2} d^2$ and the $o(1)$ is uniform over the choice of the classes.
For any graph $G\in \cG_{kd}$, by the previous observation and the definition of the ASP, $\eta_{k,d}(G)$ is equal to
\begin{align*}
\sum_{A_1,\ldots,A_k\in \cC_d}\, \prod_{i=1}^k \bar\nu_d(A_i) \sum_{\substack{G_1\in A_1,\ldots,G_k\in A_k\\ \text{edge-disjoint and}\\ G_1\cup \dotsb\cup G_k = G}}\, \frac{1}{(\e^{-D/2}+o(1))\prod_{i=1}^k |A_i|}
&= \sum_{A_1,\ldots,A_k\in \cC_d}\, \sum_{\substack{G_1\in A_1,\ldots,G_k\in A_k\\ \text{edge-disjoint and}\\ G_1\cup \dotsb\cup G_k = G}}\, \frac{\prod_{i=1}^k \nu_d(G_i)}{{\e^{-D/2}+o(1)}}\\
&= \sum_{\substack{G_1,\ldots,G_k\in \cG_d,\\ \text{edge-disjoint and}\\ G_1\cup \dotsb\cup G_k = G}}\, \frac{\prod_{i=1}^k \nu_d(G_i)}{{\e^{-D/2}+o(1)}},
\end{align*}
where the first equality used that $\nu_d(G_i) |A_i| = \bar\nu_d(A_i)$ for every $i\in [k]$ and the $o(1)$ is uniform over the choice of $G$.
However, the last term is also proportional to $(\nu_d\oplus\dotsb\oplus\nu_d)(G) = \nu_{kd}(G)$ with $\nu_d$ repeated $k$ times, 
where the equality follows from the associativity of the $\oplus$ operation (\Cref{lem:assoc}).
Since each of $\eta_{k,d}$ and $\nu_{kd}$ is a probability distribution, these two distributions must coincide up to the multiplicative $(1+o(1))$-factor, as desired.
\end{proof}

The next lemma couples the distribution of the variables $(Y_i)_{i=1}^{\infty}$ from the definition of the ASP with the distribution $\bar\mu_d$ so that, for every suitably large $k$ and a random variable $X$ with distribution $\bar\mu_d$, we have $X\in \{Y_1,\ldots,Y_k\}$ with probability close to 1.

\begin{lemma}\label{lem:coupleXY}
For every $\eps > 0$ and $d\ge 1$, there are $k = k(\eps,d)\ge 1$ and $n_0 = n_0(\eps,d,k)\ge 1$ with the following property: for every $n\ge n_0$, there is a coupling of the random variable $X$ with distribution $\bar\mu_d$ and the random variables $Y_1,\ldots,Y_k$ from \Cref{def:procedure-sample} such that $X\in \{Y_1,\ldots,Y_k\}$ with probability at least $1-\eps$.
\end{lemma}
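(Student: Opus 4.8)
The plan is to apply the maximal coupling lemma (\Cref{lem:max_coupling}) to the distributions $\bar\mu_d$ and $\bar\nu_d$ on $\cC_d(n)$, and then exploit independence of the $Y_i$'s to amplify success probability. First I would record the key structural fact underlying the whole argument: by \Cref{thm:const} applied with $d_1=\dotsb=d_d=1$ (using $d\ge 3$, which we may assume since $d=1,2$ are either trivial or excluded by context—actually the lemma as stated allows $d\ge 1$, so for $d\in\{1,2\}$ one notes $\bar\nu_d=\bar\mu_d$ directly, the isomorphism class of a perfect matching or a union of two matchings being essentially unique up to the relevant invariants), the models $G(n,d)$ and $\nu_d$ are contiguous, hence $\bar\mu_d$ and $\bar\nu_d$ are contiguous on $\cC_d(n)$. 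By \Cref{prop:cont-dtv}(ii), there is an absolute constant $\delta=\delta(d)>0$ and $n_1=n_1(d)$ so that $\dtv(\bar\mu_{d,n},\bar\nu_{d,n})\le 1-\delta$ for all $n\ge n_1$.

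Next I would set up the coupling. Fix $\eps>0$ and choose $k=k(\eps,d)$ large enough that $(1-\delta)^k<\eps$. For each $i\in[k]$, apply \Cref{lem:max_coupling} to $\bar\mu_d$ and $\bar\nu_d$ to obtain a coupling $(X^{(i)},Y_i)$ with $X^{(i)}\sim\bar\mu_d$, $Y_i\sim\bar\nu_d$, and $X^{(i)}=Y_i$ with probability $1-\dtv(\bar\mu_d,\bar\nu_d)\ge\delta$; take these $k$ couplings to be mutually independent across $i$. This is not yet quite what we want, since we need a \emph{single} $X\sim\bar\mu_d$ together with $Y_1,\dots,Y_k$, not $k$ separate copies $X^{(i)}$. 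To fix this, I would instead argue as follows: let $Y_1,\dots,Y_k$ be i.i.d.\ with law $\bar\nu_d$ (as in \Cref{def:procedure-sample}), and on the event that at least one $Y_i$ equals a value $x$, we want to be able to ``plant'' $X=x$ with the right conditional probability. Concretely, by part (iii) of \Cref{lem:max_coupling}, conditionally on $X^{(i)}\neq Y_i$ the two are independent, and $\Prob(X^{(i)}=x\mid X^{(i)}\neq Y_i)\propto \bar\mu_d(x)-\min(\bar\mu_d(x),\bar\nu_d(x))$. The cleaner route is: build $(X,Y_1)$ as a maximal coupling of $(\bar\mu_d,\bar\nu_d)$; if $X=Y_1$ we are already done, so sample $Y_2,\dots,Y_k$ i.i.d.\ $\bar\nu_d$ independently. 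If $X\neq Y_1$, then $X$ has the residual conditional law $\rho(x)\propto \bar\mu_d(x)-\min(\bar\mu_d(x),\bar\nu_d(x))$; now we want to couple $X$ (drawn from $\rho$) with $Y_2$ so that with probability $\ge\delta'$ they agree, where $\delta'>0$ is again an absolute constant. Iterating $k-1$ times: at each step, conditionally on $X$ not yet having been matched, $X$ carries a residual law supported where $\bar\mu_d>\bar\nu_d$; a further application of maximal coupling against a fresh $Y_{i+1}\sim\bar\nu_d$ matches them with probability $\ge \min_x\bigl(\bar\nu_d(x)/\rho_i(x)\bigr)\cdot(\text{mass overlap})$—and here one checks this is bounded below by a constant depending only on $d$, because contiguity forces the Radon–Nikodym derivative $d\bar\mu_d/d\bar\nu_d$ to be bounded above and below by constants on a set of $\bar\nu_d$-mass close to $1$ (this is essentially the fractional reformulation in \Cref{lem:GeneralisationContiguity}, or can be deduced from \Cref{prop:cont-dtv}(i)). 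Summing the geometric series, $\Prob(X\notin\{Y_1,\dots,Y_k\})\le(1-\delta'')^k$ for an absolute $\delta''=\delta''(d)>0$, which is $<\eps$ for $k$ large.

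The main obstacle I expect is the second paragraph's bookkeeping: turning $k$ independent maximal couplings into one coherent coupling of $(X,Y_1,\dots,Y_k)$ while keeping the marginal of $X$ exactly $\bar\mu_d$ and the marginals of the $Y_i$ exactly (i.i.d.) $\bar\nu_d$. The residual-law iteration above works, but one must verify carefully that (a) after conditioning on all failures so far, the law of $X$ is still a genuine probability measure absolutely continuous with respect to $\bar\nu_d$ with density bounded by a $d$-dependent constant—this is where contiguity of $\bar\mu_d,\bar\nu_d$ (via \Cref{prop:cont-dtv}(i)) is essential and where the dependence $n_0=n_0(\eps,d,k)$ enters, since these bounds only hold for $n$ large; and (b) the overall mixture reconstitutes $\bar\mu_d$ as the $X$-marginal, which is a telescoping identity $\bar\mu_d=\sum_i (\text{prob.\ first match at step }i)\cdot(\text{law of matched value}) + (\text{residual})$ that holds by construction. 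A slicker alternative, avoiding iteration, is to note that $\Prob(X\in\{Y_1,\dots,Y_k\})$ under the product coupling where we first draw $X\sim\bar\mu_d$ and then draw each $Y_i$ via the conditional maximal-coupling kernel $\kappa(\cdot\mid X)$ (with $\kappa(\{X\}\mid X)\ge\delta$ for the ``large overlap'' values of $X$, and handling the small-overlap exceptional set, of total $\bar\mu_d$-mass $o(1)$ by \Cref{lem:GeneralisationContiguity}, separately) satisfies $\Prob(\forall i:\ Y_i\neq X\mid X)\le(1-\delta)^k+o(1)$; taking expectations over $X$ and then $k$ large with $n\ge n_0$ finishes. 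Either way, the couplings of $(X,Y_i)$ must be made conditionally independent given $X$ so that the failure probabilities multiply, and one double-checks that the $Y_i$ remain marginally i.i.d.\ $\bar\nu_d$—which holds because the mixture $\int \kappa(\cdot\mid x)\,d\bar\mu_d(x)=\bar\nu_d$ by the defining property of the maximal coupling kernel.
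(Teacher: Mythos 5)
Your overall skeleton — replace $X$'s conditional (residual) law after each failed matching attempt, and apply a fresh maximal coupling to $\bar\nu_d$ at each step — is exactly the paper's construction, and your correct appeal to \Cref{lem:max_coupling}(iii) (to ensure the $Y_i$ end up jointly i.i.d.) is the right mechanism. However, the proposal has a genuine gap at the key quantitative step, and the ``slicker alternative'' you float is outright wrong.

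The gap: you assert that at each stage the residual law $\rho_i$ can be matched with a fresh $Y_{i+1}\sim\bar\nu_d$ with probability bounded below by an absolute constant $\delta''(d)>0$, ``because contiguity forces the Radon--Nikodym derivative $d\bar\mu_d/d\bar\nu_d$ to be bounded above and below by constants on a set of $\bar\nu_d$-mass close to $1$.'' Neither claim is supported. Contiguity does not give a uniform two-sided bound on the density; \Cref{prop:cont-dtv}(i) only gives, for each target $\delta$, a threshold $\gamma(\delta)$ relating masses of \emph{sets}, and nothing prevents the successive residual laws $\rho_i$ from concentrating on a shrinking set $B_i$ where $\bar\nu_d(B_i)$ is tiny, in which case $\dtv(\rho_i,\bar\nu_d)$ can be arbitrarily close to $1$ (and there is no uniform-in-$i$ geometric rate). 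The paper does not prove geometric decay and does not need to. Instead it fixes $\alpha=\eps$, and shows by a clean inductive bound that if the cumulative failure probability $Z_1\cdots Z_k$ were still at least $\alpha$, then the support set $B_k$ of the $k$-th residual would satisfy $\bar\mu_d(B_k)\ge\alpha$ while $\bar\nu_d(B_k)\le 1/(\alpha k)$; for $k$ large (depending only on $\eps$ and on the $\gamma(\cdot)$ from \Cref{prop:cont-dtv}(i)) this contradicts contiguity. This contradiction argument is the missing idea in your write-up: the product of failure probabilities is shown to drop below $\eps$ without ever controlling any single $Z_i$ away from $1$.

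Separately, your proposed ``slicker alternative'' — first draw $X\sim\bar\mu_d$ and then draw $Y_1,\dots,Y_k$ conditionally independent given $X$ via the maximal-coupling kernel $\kappa(\cdot\mid X)$ — does not produce a valid coupling: the identity $\int\kappa(\cdot\mid x)\,d\bar\mu_d(x)=\bar\nu_d$ only gives each $Y_i$ the correct \emph{marginal}, not joint independence. Since $\kappa(\cdot\mid x)$ genuinely depends on $x$, the $Y_i$'s are correlated through $X$ and hence are not i.i.d.\ $\bar\nu_d$, so this construction does not couple $X$ with the variables of \Cref{def:procedure-sample}. You should drop that paragraph and stick with the sequential residual construction, but replace the unjustified geometric-rate claim by the contradiction argument via \Cref{prop:cont-dtv}(i).
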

\begin{proof}
Fix $k\ge 1$ to be chosen suitably large in the sequel.
We inductively couple $X$ and the sequences $Y_1,\ldots,Y_i$ for every $i\ge 1$ by using a sequence of Bernoulli random variables $I_0=0,I_1,I_2,\ldots$.
For every $i\ge 1$, the event $I_i = 1$ will indicate that $X\in \{Y_1,\ldots,Y_i\}$: in particular, showing that $I_k=1$ holds for some $k\ge 1$ with probability at least $1-\eps$ is enough to conclude.

We iteratively construct a sequence of measures $\zeta_i = \zeta_{i,k,d,n}$ on $\cC_d$ and partition functions 
\[Z_i = Z_{i,k,d,n} = \sum_{A\in \cC_d} \zeta_i(A)\]
for all $i\ge 0$, starting with $\zeta_0 = \bar\mu_d$ and $Z_0 = 1$. 
Fix $i\ge 1$ and suppose that $X$ and the sequence $Y_1,\ldots,Y_{i-1}$ have been coupled, the case $i=1$ being trivial. 
If $I_{i-1}=1$, we set $I_i = 1$, $\zeta_i = \zeta_{i-1}$ and proceed to the next step.
If $I_{i-1}=0$, suppose that the distribution of $X$ conditionally on $I_{i-1}=0$ is equal to $\zeta_{i-1}$; note that this assumption holds when $i=1$.
Then, by \Cref{lem:max_coupling}, we fix a coupling of $X$ conditionally on $I_{i-1}$ and $Y_i$ so that $X=Y_i$ with probability $1-\dtv(\zeta_{i-1}/Z_{i-1},\bar\nu_d)$. 
As mentioned earlier, we let $I_i$ denote the indicator random variable of the event $X=Y_i$
and, for every $S\in \cC_d$, we set
\begin{equation}\label{eq:zeta}
\zeta_i(S) = \frac{\zeta_{i-1}(S)}{Z_{i-1}} - \min\bigg(\frac{\zeta_{i-1}(S)}{Z_{i-1}}, \bar\nu_d(S)\bigg)\qquad \text{and}\qquad Z_i = \sum_{A\in \cC_d} \zeta_i(A).
\end{equation}
By \Cref{lem:max_coupling}(iii) applied with $\mu = \zeta_{i-1}/Z_{i-1}$ and $\nu = \bar\nu_d$, it follows that $\zeta_i/Z_i$ is the distribution of $X$ conditionally on $I_i = 0$ and, moreover, conditionally on $I_{i-1} = 0$, the event $I_i = 0$ is independent of the random variables $Y_1,\ldots,Y_{i-1}$.
Furthermore, note that $Z_i = \mathbb P(I_i = 0\mid I_{i-1}=0) = \dtv(\zeta_{i-1}/Z_{i-1},\bar\nu_d)$ and, as a consequence, for every $k\ge 1$, the probability that $I_k = 0$ is equal to the product $Z_1\cdots Z_k$ by the chain rule. The remainder of the proof is dedicated to showing that this product tends to 0 as $k\to \infty$.

We argue by contradiction. Suppose that $I_i = 0$ and the (decreasing) sequence $(Z_1\dotsb Z_i)_{i\ge 1}$ converges to a strictly positive limit $\alpha > 0$. Fix $k\ge 1$ and, for every $i\ge 0$, denote by $B_i\subseteq \cC_d$ the subset of classes $S$ where $\zeta_i(S) > 0$ and note that the sequence of sets $(B_i)_{i\ge 0}$ is decreasing with respect to inclusion.
We show by induction that, for each class $S\in B_k$ and for every $i\in \{k-1,k-2,\ldots,0\}$, 
\[\zeta_i(S) \ge \bar\nu_d(S) \sum_{j=i}^{k-1} \prod_{\ell = i}^j Z_\ell.\]
The case $i=k-1$ requires that $\zeta_{k-1}(S)\ge \bar\nu_d(S) Z_{k-1}$, which follows from the assumption $S\in B_k$ and the definition of $\zeta_k$ in~\eqref{eq:zeta}.
Suppose that the statement holds for $i+1\in [k-1]$.
Since $S\in B_k\subseteq B_{i+1}$, we know that $\zeta_{i+1}(S)>0$ and, therefore, $\min(\zeta_i(S)/Z_i, \bar{\nu}_d(S)) = \bar{\nu}_d(S)$.
Then, by using~\eqref{eq:zeta} again, we deduce that
\[\zeta_{i+1}(S) = \frac{\zeta_i(S)}{Z_i} - \bar\nu_d(S)
\ge \bar\nu_d(S) \sum_{j=i+1}^{k-1} \prod_{\ell = i+1}^j Z_\ell 
\quad\text{and therefore, }\quad \frac{\zeta_i(S)}{Z_i}\ge  \bar\nu_d(S) + \bar\nu_d(S) \sum_{j=i+1}^{k-1} \prod_{\ell = i+1}^j Z_\ell,\]
implying the induction hypothesis for $i$.

By the induction hypothesis for $i=0$, for every $S\in B_k$, we have that
\[\bar\mu_d(S) = \zeta_0(S)\ge \bar\nu_d(S) \sum_{j=0}^{k-1}\prod_{\ell=0}^j Z_\ell\ge \bar\nu_d(S)\cdot \alpha k.\]
In particular, $\bar\mu_d(B_k)\ge \bar\nu_d(B_k)\cdot \alpha k$ and, as a consequence, 
\[1\ge \mu_d(\{G: \exists S\in B_k, G\in S\})\ge \nu_d(\{G: \exists S\in B_k, G\in S\})\cdot \alpha k.\]
In particular, by \Cref{prop:cont-dtv}(i) and the contiguity of $\mu_d$ and $\nu_d$ justified by \Cref{thm:const}, 
there exists a large enough $k$ so that 
\[\nu_d(\{G: \exists S\in B_k, G\in S\})\le (\alpha k)^{-1}\qquad \text{implies that}\qquad \mu_d(\{G: \exists S\in B_k, G\in S\}) < \alpha.\]
Furthermore, note that, under the event $I_k = 0$, $X$ must belong to a class in $B_k$ with probability 1. Thus,
\[\mu_d(\{G: \exists S\in B_k, G\in S\}) \ge \mathbb P(I_k = 0) = Z_1\dotsb Z_k \ge \alpha,\]
leading to a contradiction.
\end{proof}

Finally, we show that \Cref{lem:coupleXY} implies that $G(n,d)$ and $G(n,1)\oplus\dotsb\oplus G(n,1)$ repeated $kd$ times can be coupled so that the former random graph is included in the latter with probability close to 1.

\begin{lemma}\label{lem:complete_couple}
For every $\eps > 0$ and $d\ge 1$, there exists $k = k(\eps, d)\ge 1$ with the following property: there is a coupling of the random graphs $G$ with distribution $\mu_d$ and 
$G_{\oplus}$ with distribution $\nu_{kd}$ so that $G\subseteq G_{\oplus}$ with probability at least $1-\eps$.
\end{lemma}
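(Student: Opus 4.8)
The plan is to combine \Cref{lem:coupleXY}, the Alternative Sampling Procedure of \Cref{def:procedure-sample}, and \Cref{lem:coincide}. Fix $\eps > 0$, apply \Cref{lem:coupleXY} with parameter $\eps/2$ to obtain an integer $k = k(\eps/2, d)$, and (for $n$ large enough) couple a random variable $X$ with distribution $\bar\mu_d$ to the i.i.d.\ seeds $Y_1,\ldots,Y_k$ with distribution $\bar\nu_d$ from \Cref{def:procedure-sample}, so that $X \in \{Y_1,\ldots,Y_k\}$ with probability at least $1-\eps/2$. Running the ASP on these seeds, using randomness that is independent of $X$ given $(Y_1,\ldots,Y_k)$, produces pairwise edge-disjoint graphs $H_1,\ldots,H_k$ with $H_i$ in the isomorphism class $Y_i$ and with $H_1\cup\dotsb\cup H_k$ having distribution $\eta_{k,d}$.

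The crucial observation is that, conditionally on $(Y_1,\ldots,Y_k)$, each $H_j$ is uniformly distributed on the isomorphism class $Y_j$. Indeed, $(H_1,\ldots,H_k)$ is uniform on the set of pairwise edge-disjoint tuples in $Y_1\times\dotsb\times Y_k$, and this set is invariant under the diagonal action of $S_n$ on vertex labels, since relabelling all coordinates by the same permutation preserves each isomorphism class and preserves edge-disjointness; projecting to the $j$-th coordinate, the law of $H_j$ is an $S_n$-invariant probability measure on the $S_n$-orbit $Y_j$, hence uniform on $Y_j$. I then define $G$ by setting $G := H_j$, where $j$ is the smallest index with $Y_j = X$, whenever $X \in \{Y_1,\ldots,Y_k\}$, and letting $G$ be an independent uniformly random graph in the isomorphism class $X$ otherwise. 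Conditioning on $X = A$ for a fixed isomorphism class $A$: on the event $X \in \{Y_i\}$ the index $j$ depends only on $(Y_1,\ldots,Y_k)$, and $G = H_j$ is uniform on $A$ by the observation above, while on the complementary event $G$ is uniform on $A$ by construction; hence $G$ is uniform on $A$ given $X = A$, and since $X \sim \bar\mu_d$ this yields $G \sim \mu_d$.

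Finally, I replace $\eta_{k,d}$ by $\nu_{kd}$: \Cref{lem:coincide} gives $\eta_{k,d}(G') = (1+o(1))\nu_{kd}(G')$ uniformly in $G'$, so $\dtv(\eta_{k,d},\nu_{kd}) = o(1)$, and \Cref{lem:max_coupling} furnishes a coupling of $H_1\cup\dotsb\cup H_k$ with a graph $G_{\oplus}$ of distribution $\nu_{kd}$ that agree with probability $1-o(1)\ge 1-\eps/2$ for $n$ large. On the event $\{X\in\{Y_i\}\}\cap\{G_\oplus = H_1\cup\dotsb\cup H_k\}$ we have $G = H_j \subseteq H_1\cup\dotsb\cup H_k = G_\oplus$, and a union bound over the two exceptional events, each of probability at most $\eps/2$, gives $\Prob(G\subseteq G_\oplus)\ge 1-\eps$.

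I expect the main obstacle to be the distributional bookkeeping in the second paragraph: checking, via the diagonal $S_n$-symmetry of the edge-disjointness constraint, that the ASP component $H_j$ is conditionally uniform on its isomorphism class, and confirming that the selection rule ``$G := H_j$ when $X = Y_j$'' does not bias $G$ away from $\mu_d$, even though it is applied only on the non-trivial event $X\in\{Y_1,\ldots,Y_k\}$.
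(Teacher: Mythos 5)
Your proof is correct and takes essentially the same approach as the paper: invoke \Cref{lem:coupleXY} with parameter $\eps/2$ to couple $X\sim\bar\mu_d$ with the ASP seeds $Y_1,\ldots,Y_k$, observe that conditionally on $(Y_1,\ldots,Y_k)$ each $H_i$ is uniform on its class $Y_i$ (the paper phrases the symmetry argument as ``every graph in $Y_i$ participates in the same number of edge-disjoint $k$-tuples,'' which is the same diagonal $S_n$-invariance you use), set $G=H_j$ on the event $X=Y_j$, and finally pass from $\eta_{k,d}$ to $\nu_{kd}$ via \Cref{lem:coincide} and a maximal coupling. Your second paragraph, which verifies carefully that the selection rule $G:=H_j$ does not bias $G$ away from $\mu_d$ by conditioning on $X=A$ and using the conditional independence of the ASP randomness from $X$ given $(Y_1,\ldots,Y_k)$, spells out a distributional check that the paper leaves implicit; this is a genuine point worth being precise about, but it does not change the route.
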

\begin{proof}
First of all, fix $k = k(\eps/2,d)$ as given in \Cref{lem:coupleXY} and sample the corresponding (coupled) random variables $X,Y_1,\ldots,Y_k$. 
Next, we sample (according to a coupling to be explained shortly) the random graph $G$ uniformly at random in the set $X$, and the random graph $G_{\oplus}' = H_1\oplus\dotsb\oplus H_k$ according to the ASP.
By definition of $X$, the graph $G$ is sampled uniformly at random in $\cG_d$ while the graph $G_{\oplus}'$ has distribution $\eta_{k,d}$.

By \Cref{lem:coupleXY}, with probability at least $1-\eps/2$, there exists $i\in [k]$ such that $X = Y_i$.
Moreover, observe that, conditionally on the sets $Y_1,\ldots,Y_k$, for every $i\in [k]$, the graph $H_i$ is distributed uniformly on the set $Y_i$: indeed, by symmetry, every graph in $Y_i$ participates in the same number of $k$-tuples of edge-disjoint graphs in $Y_1\times \dotsb \times Y_k$.
As a result, conditionally on the sets $X,Y_1,\ldots,Y_k$ and the event $X=Y_i$ for some $i\in [k]$, the graphs $G$ and $H_i$ can be coupled so that $G=H_i$ with probability 1. Since the probability measures $\nu_{kd}$ and $\eta_{k,d}$ coincide up to a $(1+o(1))$-factor by \Cref{lem:coincide}, one can couple the graphs $G_{\oplus}$ and $G_{\oplus}'$ so that $G_{\oplus} = G_{\oplus}'$ with probability $1-o(1)$, which is enough to conclude.
\end{proof}

\section{\texorpdfstring{Proof of \Cref{thm:Coupling-Inclusion}}{Proof of  Theorem \ref{thm:Coupling-Inclusion}}}\label{sec:proofs}

We are ready to prove \Cref{thm:Coupling-Inclusion}.

\begin{proof}[Proof of \Cref{thm:Coupling-Inclusion}.]
Recall that, by \cite[Theorem~6]{Gao23}, \Cref{thm:Coupling-Inclusion} holds when $\omega((\log n)^7) = d_1\le d_2\le n-1$.
Therefore, it is enough to show the statement when $d_1\le d_2\le z(n) := (\log n)^8$: indeed, for every $d_1\le z(n)$ and $d_2\in [z(n), n-1]$, a coupling satisfying whp $G(n,d_1)\subseteq G(n,d_2)$ can be constructed via two couplings where whp $G(n,d_1)\subseteq G(n,z(n))$ and $G(n,z(n))\subseteq G(n,d_2)$, respectively.
We thus focus on the regime $d_2 \leq z(n)$. 
We consider the cases $d_1 = O(1)$ and $d_1 = \omega(1)$ separately.

\vspace{0.5em}
\noindent
\textbf{Case 1: $d_1 = O(1)$.}
First, we set $d' = d_2/2$ and show that $\dtv(\mu_{d'}\oplus \nu_{d_2-d'}, \mu_{d_2}) = o(1)$. By using the triangle inequality and \Cref{cor:cont_Gnd}(a), we get
\begin{equation}\label{eq:dtv1}
\begin{split}
\dtv(\mu_{d'}\oplus \nu_{d_2-d'},\mu_{d_2})
&\le \sum_{i=0}^{d_2-d'-1} \dtv(\mu_{d'+i}\oplus \nu_{d_2-d'-i}, \mu_{d'+i+1}\oplus \nu_{d_2-d'-i-1})\\
&= \sum_{i=0}^{d_2-d'-1} \dtv((\mu_{d'+i}\oplus\mu_1 )\oplus\nu_{d_2-d'-i-1}, \mu_{d'+i+1}\oplus \nu_{d_2-d'-i-1})\\
&\leq C\cdot \sum_{i=0}^{d_2-d'-1}\dtv(\mu_{d'+i}\oplus \mu_1, \mu_{d'+i+1})\,,
\end{split}
\end{equation}
where $C > 0$ is the absolute constant in \Cref{cor:cont_Gnd}(a).
Then, by \Cref{cor:dTV},
\begin{equation}\label{eq:d'}
\dtv(\mu_{d'}\oplus \eta_{d_2-d'},\mu_{d_2}) \leq  C\cdot \sum_{i=0}^{d_2-d'-1} (d'+i)^{-1.1} = C\cdot \sum_{j=d'}^{d_2-1}j^{-1.1} = o(1)\,.    
\end{equation}
Fix any set of $kd_1$ edge-disjoint perfect matchings $M_1,\ldots,M_{kd_1}\subseteq K_n$.
On the one hand, we show that, for any choice of $M_1,\ldots,M_{kd_1}$, the probability that $M_1,\ldots,M_{kd_1}$ are the first $kd_1$ matchings in the composition of $G(n,1)\oplus\dotsb\oplus G(n,1)\oplus G(n,d')$ with $G(n,1)$ repeated $d_2-d'$ times is the same for all choices of $M_1,\ldots,M_{kd_1}$ up to a multiplicative factor of $(1+o(1))$. 
Indeed, similarly to the proof of \Cref{cor:cont_Gnd}, the latter statement follows by $d'-d_1+1$ consecutive applications of \Cref{thm:McKay}, $d'-d_1$ of them with parameters 
\[x_1=\dotsb=x_n\in [kd_1, d'-1]\qquad \text{and}\qquad g_1=\dotsb=g_n=1\]
and the last one with parameters
\[x_1=\dotsb=x_n=d_2-d'\qquad \text{and}\qquad g_1=\dotsb=g_n=d'.\]
\noindent
As a result, for every $k\ge 1$, there is a coupling ensuring that whp $G(n,1)\oplus\dotsb\oplus G(n,1)$ repeated $kd_1$ times is a subgraph of $G(n,1)\oplus\dotsb\oplus G(n,1)\oplus G(n,d')$ with $G(n,1)$ repeated $d_2-d'$ times.

On the other hand, by \Cref{lem:complete_couple} applied with $d = d_1$, for every $\eps > 0$, there is $k = k(\eps, d_1)\ge 1$ and a coupling where $G(n,d_1)$ is realised as a subgraph of $G(n,1)\oplus\dotsb\oplus G(n,1)$ repeated $kd_1$ times with probability at least $1-\eps$.
By composing the above two couplings and using~\eqref{eq:d'} and the fact that the latter result holds for every $\eps > 0$, we obtain the desired conclusion when $d_1 = O(1)$.

\vspace{0.5em}
\noindent
\textbf{Case 2: $d_1=\omega(1)$.}
By replacing $d'$ with $d_1$ in~\eqref{eq:dtv1} and~\eqref{eq:d'}, one similarly derives that
\begin{equation}\label{eq:d1}
\dtv(\mu_{d_1}\oplus \eta_{d_2-d_1},\mu_{d_2}) \leq  C\cdot \sum_{i=0}^{d_2-d_1-1} (d_1+i)^{-1.1} = C\cdot \sum_{j=d_1}^{d_2-1}j^{-1.1} = o(1)\,.    
\end{equation}
Once again, by $d_2-d_1$ consecutive applications of \Cref{thm:McKay} with 
\[x_1=\dotsb=x_n\in [d_1, d_2-1]\qquad \text{and}\qquad g_1=\dotsb=g_n=1,\] 
for every $d_1$-regular graph $G\subseteq K_n$, the probability that $G$ is the $d_1$-regular graph in the composition of $G(n,d_1)\oplus G(n,1)\oplus\dotsb\oplus G(n,1)$ with $G(n,1)$ repeated $d_2-d_1$ times is the same for all $G$ up to a multiplicative factor of $(1+o(1))$. 
In particular, there is a coupling between the first marginal mentioned above and the random graph $G(n,d_1)$ which identifies the two whp.
By combining this observation with \eqref{eq:d1}, we conclude there exists a coupling such that $\Prob(G_1\subseteq G_2) = 1-o(1)$ where $G_1,G_2$ are sampled from $\mu_{d_1}$ and $\mu_{d_2}$ respectively, as desired.
\end{proof}

\section{Concluding remarks}\label{sec:conclusion}
In this paper, we exhibited several results related to the monotonicity and the nesting properties of random regular graphs, making progress towards \Cref{conj:GIM-Inclusion} by Gao, Isaev and McKay~\cite{GIM22}, and \Cref{conj:IMSZ-coupling-sum} by Isaev, McKay, Southwell and Zhukovskii~\cite{IMcKSZ23}.
While partial progress has been obtained on each of them, completing the general picture remains an important open problem.
One particular point of interest is the converse implication of \Cref{thm:Contiguity-Disjoint-Matchings} for $d\le n^{1/10}$, or more generally for $d\in [n-1]$.
While our proof fails to establish contiguity between $G(n,d)$ and $G(n,1)\oplus\dotsb\oplus G(n,1)$ repeated $d$ times, this would follow directly by combining our approach and a version of \Cref{thm:const} for $k=d$ and $d_1=\dotsb=d_k=1$ with arbitrarily slowly growing $d=\omega(1)$.

It would also be interesting to extend \Cref{thm:Coupling-Inclusion,thm:Coupling-Sprinkling-2-Random} for odd $n$ (as long as the random regular graphs with those parameters exist). 
Again, our approach could be instrumental in deriving a similar bound to \Cref{cor:dTV} for $\dtv(\mu_d \oplus \mu_2, \mu_{d+2})$ but important estimates for the count of 2-factors in random regular graphs are unavailable.
A key missing ingredient is exhibiting good concentration for the number of these $2$-factors: to the best of our knowledge, no such result currently exists in the literature for regular graphs with growing degree (see~\cite{Rob96} for results concerning random regular graphs of bounded degree).

We hope that the results in this work could serve as a valuable tool in the study of random regular graphs. In a forthcoming work~\cite{HLMPW25+}, the authors employ \Cref{thm:Contiguity-Disjoint-Matchings} to establish an approximate version of the Itai-Zehavi conjecture for random regular graphs.

\paragraph{Acknowledgements.} We are grateful to B\'ela Bollob\'as and to Matthew Kwan for their helpful comments and suggestions, and to Nicholas Wormald for useful remarks on a preliminary version of this work and pointing us towards the reference~\cite{Rob96}. 
Part of this research was done during a visit of the fourth author to IST Austria. We thank IST Austria for its hospitality.

\bibliographystyle{amsplain_initials_nobysame_nomr}

\begin{thebibliography}{10}

\bibitem{BWZ02}
S.~Bau, N.~C. Wormald, and S.~Zhou, \emph{Decycling numbers of random regular graphs}, Random Structures and Algorithms \textbf{21} (2002), no.~3-4, 397--413.

\bibitem{Den12}
F.~Den~Hollander, \emph{Probability theory: The coupling method}, 2012, Lecture notes available online (https://prob.math.leidenuniv.nl/lecturenotes/CouplingLectures.pdf).

\bibitem{DFRS17}
A.~Dudek, A.~Frieze, A.~Ruci{\'n}ski, and M.~{\v{S}}ileikis, \emph{Embedding the {E}rd{\H{o}}s--{R}{\'e}nyi hypergraph into the random regular hypergraph and {H}amiltonicity}, Journal of Combinatorial Theory, Series B \textbf{122} (2017), 719--740.

\bibitem{FB18}
A.~Ferber and V.~Jain, \emph{1-factorizations of pseudorandom graphs}, 2018 IEEE 59th Annual Symposium on Foundations of Computer Science (FOCS), IEEE, 2018, pp.~698--708.

\bibitem{FJP22}
N.~Fountoulakis, F.~Joos, and G.~Perarnau, \emph{Percolation on random graphs with a fixed degree sequence}, SIAM Journal on Discrete Mathematics \textbf{36} (2022), no.~1, 1--46.

\bibitem{Gao23}
P.~Gao, \emph{The number of perfect matchings, and the nesting properties, of random regular graphs}, Random Structures and Algorithms \textbf{62} (2023), no.~4, 935--955.

\bibitem{Gao23b}
P.~Gao, \emph{Triangles and subgraph probabilities in random regular graphs}, The Electronic Journal of Combinatorics (2024), paper P1.2.

\bibitem{GIM20+}
P.~Gao, M.~Isaev, and B.~McKay, \emph{Kim--{V}u's sandwich conjecture is true for $d\ge \log^4 n$}, 2020, arXiv preprint arXiv:2011.09449.

\bibitem{GIM22}
P.~Gao, M.~Isaev, and B.~D. McKay, \emph{Sandwiching dense random regular graphs between binomial random graphs}, Probability Theory and Related Fields \textbf{184} (2022), no.~1-2, 115--158.

\bibitem{GO23}
P.~Gao and Y.~Ohapkin, \emph{Subgraph probability of random graphs with specified degrees and applications to chromatic number and connectivity}, Random Structures and Algorithms \textbf{62} (2023), no.~4, 911--934.

\bibitem{HLMPW25+}
L.~Hollom, L.~Lichev, A.~Mond, J.~Portier, and Y.~Wang, \emph{Approximate {I}tai-{Z}ehavi conjecture for random graphs}, In preparation.

\bibitem{IMcKSZ23}
M.~Isaev, B.~D. McKay, A.~Southwell, and M.~Zhukovskii, \emph{Sprinkling with random regular graphs}, Electronic Journal of Probability \textbf{30} (2025), 1--20.

\bibitem{Jan94}
S.~Janson, \emph{Orthogonal decompositions and functional limit theorems for random graph statistics}, Memoirs of the American Mathematical Society \textbf{111} (1994), no.~534, vi+78.

\bibitem{Jan95}
S.~Janson, \emph{Random regular graphs: asymptotic distributions and contiguity}, Combinatorics, Probability and Computing \textbf{4} (1995), no.~4, 369--405.

\bibitem{JP18}
F.~Joos and G.~Perarnau, \emph{Critical percolation on random regular graphs}, Proceedings of the American Mathematical Society \textbf{146} (2018), no.~8, 3321--3332.

\bibitem{JPRR18}
F.~Joos, G.~Perarnau, D.~Rautenbach, and B.~Reed, \emph{How to determine if a random graph with a fixed degree sequence has a giant component}, Probability Theory and Related Fields \textbf{170} (2018), no.~1-2, 263--310.

\bibitem{KV04}
J.~H. Kim and V.~H. Vu, \emph{Sandwiching random graphs: universality between random graph models}, Advances in Mathematics \textbf{188} (2004), no.~2, 444--469.

\bibitem{KW01}
J.~H. Kim and N.~C. Wormald, \emph{Random matchings which induce {H}amilton cycles and {H}amiltonian decompositions of random regular graphs}, Journal of Combinatorial Theory, Series B \textbf{81} (2001), no.~1, 20--44.

\bibitem{KRRS23}
T.~Klimo{\v{s}}ov{\'a}, C.~Reiher, A.~Ruci{\'n}ski, and M.~{\v{S}}ileikis, \emph{Sandwiching biregular random graphs}, Combinatorics, Probability and Computing \textbf{32} (2023), no.~1, 1--44.

\bibitem{Kop24}
T.~Koperberg, \emph{Couplings and {M}atchings: Combinatorial notes on {S}trassen’s theorem}, Statistics and Probability Letters \textbf{209} (2024), 110089.

\bibitem{KSVW01}
M.~Krivelevich, B.~Sudakov, V.~H. Vu, and N.~C. Wormald, \emph{Random regular graphs of high degree}, Random Structures and Algorithms \textbf{18} (2001), no.~4, 346--363.

\bibitem{LMP22}
L.~Lichev, D.~Mitsche, and G.~Perarnau, \emph{Percolation on dense random graphs with given degrees}, Journal of Combinatorial Theory, Series B \textbf{167} (2024), 250--282.

\bibitem{McK81}
B.~D. McKay, \emph{Subgraphs of random graphs with specified degrees}, Congressus Numerantium \textbf{33} (1981), 213--223.

\bibitem{McK85}
B.~D. McKay, \emph{Asymptotics for symmetric 0-1 matrices with prescribed row sums}, Ars Combinatoria \textbf{19} (1985), 15--25.

\bibitem{MW91}
B.~D. McKay and N.~C. Wormald, \emph{Asymptotic enumeration by degree sequence of graphs with degrees $o(n^{1/2})$}, Combinatorica \textbf{11} (1991), no.~4, 369--382.

\bibitem{MRRW97}
M.~S.~O. Molloy, H.~Robalewska, R.~W. Robinson, and N.~C. Wormald, \emph{{$1$}-factorizations of random regular graphs}, Random Structures and Algorithms \textbf{10} (1997), no.~3, 305--321.

\bibitem{Rob96}
H.~D. Robalewska, \emph{{$2$}-factors in random regular graphs}, J. Graph Theory \textbf{23} (1996), no.~3, 215--224.

\bibitem{Wor99}
N.~C. Wormald, \emph{Models of random regular graphs}, London Mathematical Society lecture note series (1999), 239--298.

\end{thebibliography}

\end{document}